\documentclass[a4paper,11pt]{article}
\usepackage[utf8]{inputenc}
\usepackage{a4wide}
\usepackage{algorithm}
\usepackage{algorithmic}
\usepackage{latexsym,amsfonts,amsmath,amssymb,mathrsfs,url,amsthm}
\usepackage{mathtools}
\usepackage{dsfont}
\usepackage{color,graphicx}
\usepackage{lipsum}
\usepackage{subcaption}
\usepackage{hyperref}
\usepackage{xcolor}
\newtheorem{theorem}{Theorem}[section]
\newtheorem{lemma}[theorem]{Lemma}

\newtheorem{remark}[theorem]{Remark}
\newtheorem{definition}[theorem]{Definition}

\mathtoolsset{showonlyrefs}

\newcommand{\rd}{\, \mathrm{d}}

\newcommand{\wor}{\mathrm{wor}}

\newcommand{\sob}{\mathrm{sob}}
\newcommand{\sym}{\mathrm{sym}}
\newcommand{\bszero}{\boldsymbol{0}}
\newcommand{\bsone}{\boldsymbol{1}}
\newcommand{\bsa}{\boldsymbol{a}}
\newcommand{\bsh}{\boldsymbol{h}}

\newcommand{\bsk}{\boldsymbol{k}}
\newcommand{\bsl}{\boldsymbol{\ell}}
\newcommand{\bsx}{\boldsymbol{x}}
\newcommand{\bsy}{\boldsymbol{y}}
\newcommand{\bsz}{\boldsymbol{z}}
\newcommand{\bsgamma}{\boldsymbol{\gamma}}
\newcommand{\bsGamma}{\boldsymbol{\Gamma}}
\newcommand{\bssigma}{\boldsymbol{\sigma}}
\newcommand{\mi}{\mathrm{i}}
\newcommand{\EE}{\mathbb{E}}
\newcommand{\NN}{\mathbb{N}}
\newcommand{\PP}{\mathbb{P}}
\newcommand{\RR}{\mathbb{R}}
\newcommand{\ZZ}{\mathbb{Z}}
\newcommand{\Acal}{\mathcal{A}}
\newcommand{\Bcal}{\mathcal{B}}
\newcommand{\Ocal}{\mathcal{O}}
\newcommand{\Pcal}{\mathcal{P}}
\DeclareMathOperator{\supp}{supp}

\allowdisplaybreaks

\title{Multivariate integration and approximation in weighted Sobolev spaces of low fractional smoothness\thanks{The work of T.G.\ was supported by JSPS KAKENHI Grant Number JP26K22266.}}
\author{Mou Cai\thanks{Graduate School of Engineering, The University of Tokyo, 7-3-1 Hongo, Bunkyo-ku, Tokyo 113-8656, Japan (\url{caimoumou@g.ecc.u-tokyo.ac.jp}; \url{goda@frcer.t.u-tokyo.ac.jp})} \and Takashi Goda\footnotemark[1]}
\date{\today}

\begin{document}

\maketitle

\begin{abstract}
The weighted half-period cosine space has often been employed in the theory of quasi-Monte Carlo methods for multivariate integration and approximation of non-periodic functions. For integer-order smoothness, its norm equivalence to certain weighted unanchored Sobolev spaces has been established in the literature. In this work, we extend this equivalence to fractional-order smoothness up to $2$. By introducing an explicit representation via Slobodeckij-type seminorms, we prove a norm equivalence between the half-period cosine spaces and the corresponding weighted unanchored Sobolev spaces. Our Sobolev norm representation clarifies how the fractional regularity dictates the presence or absence of boundary constraints and (non-)periodic structures. Furthermore, we investigate the limiting behavior of these fractional spaces as the smoothness parameter approaches integer boundaries, establishing a continuous bridge to the classical integer-order Sobolev spaces. These equivalence results enable us to transfer the near-optimal error bounds and tractability results for multivariate integration and function approximation from the half-period cosine settings to our newly introduced fractional Sobolev spaces.
\end{abstract}
\noindent \textbf{Keywords:} Multivariate problems, quasi-Monte Carlo methods, lattice algorithms, half-period cosine basis, fractional Sobolev spaces.

\noindent \textbf{2020 Mathematics Subject Classification:} Primary 65D30, 46E35; Secondary 41A55, 41A63, 65D15, 65D32.

%%%%%%%%%%%%%%%%%%%%%%%%%%%%%%%%%%%%%%%%%%%%%%%%%%
%%%%%%%%%%%%%%%%%%%%%%%%%%%%%%%%%%%%%%%%%%%%%%%%%%
\section{Introduction}\label{sec:intro}
Numerical integration and function approximation in high dimensions are fundamental computational tasks arising in various fields, including financial engineering \cite{G04,L09}, uncertainty quantification \cite{DGIP21,S15}, and the numerical solution of partial differential equations with random coefficients \cite{KN16,SG11}. Over the past decades, quasi-Monte Carlo (QMC) methods based on rank-1 lattice point sets and (higher-order) digital nets have been studied intensively in mitigating the curse of dimensionality and attaining tractability in the sense of Information-Based Complexity \cite{DKP22,DKS13,DP10,N92,NW08,NW10,SJ94}. To analyze the convergence behavior of QMC-based algorithms, the target functions are typically modeled in reproducing kernel Hilbert spaces (RKHS), where smoothness and weights play important roles in error decay and dependence on the dimension $d$, respectively. Here, the weights are assigned to each subset of variables, modeling the relative importance of the corresponding coordinate directions within the norm of the space \cite{H98,SW98}.

Among various classes of RKHS used in QMC theory, classical Fourier-based spaces, such as the weighted Korobov space, have provided a central framework for periodic functions on $[0,1]^d$. This prominence is partly due to their affinity with rank-1 lattice point sets; on the Korobov space, the worst-case integration error of lattice rules can be expressed in an explicitly computable form, enabling the efficient search for good generating vectors via the component-by-component (CBC) construction \cite{DGS22,K03,NC06,SR02}. Furthermore, rank-1 lattice-based algorithms within the Korobov space have been extensively investigated for multivariate function approximation. To date, various approaches have been developed, including single rank-1 lattice algorithms \cite{CGK26,KSW06,KWW09,ZKH09,ZLH06}, multiple lattice algorithms \cite{CG26,K18,K19,KV19}, median lattice algorithms \cite{PCDGK26,PGK25,PKG25}, lattice with multiple shift algorithms \cite{CDG25,DD26}, and subsampling algorithms \cite{BGKS26}. Although the non-optimality of single rank-1 lattice algorithms in the current context was proven in \cite[Section~3]{BKUV17}, essentially the same results were already available in the classical literature \cite{HW81,K63,S60}. This is why many variants as above have been proposed to address the (near-)optimality in terms of the convergence rate and/or tractability.

However, most practical target functions, such as those appearing in the aforementioned applications, are inherently non-periodic. If one forces a periodic structure onto a non-periodic function via artificial periodic continuation, severe discontinuities or boundary singularities are typically introduced at the domain boundaries. In the context of lattice-based QMC integration and approximation, such artificial boundary singularities dramatically deteriorate the convergence behavior. To circumvent this issue without losing the computational advantages of spectral representations, the weighted half-period cosine space, denoted by $H_{\alpha,\bsgamma,d}^{\cos}$, was introduced as a powerful alternative that bypasses the need for periodic assumptions \cite{DNP14}. By employing a complete orthonormal basis composed of half-period cosines, this space successfully captures the non-periodic nature of functions over $[0,1]^d$, while maintaining an explicit spectral-side representation that inherits the excellent compatibility with rank-1 lattice point sets when combined with the tent transformation $\varphi(x)=1-|2x-1|$.

To understand the physical meaning of the half-period cosine space, it is essential to establish its connection to classical Sobolev spaces defined via weak derivatives in the physical domain. For integer-order smoothness (e.g., $\alpha = 1$ or $\alpha = 2$), it has been shown in \cite{DNP14,GSY19} that the half-period cosine spaces are norm-equivalent to certain weighted unanchored Sobolev spaces. This equivalence bridges the gap between the spectral-side representation, which is ideal for algorithm construction and error analysis, and the physical-side representation, which directly reflects the intrinsic differentiability and boundary constraints of functions. However, a significant gap remains: the existing equivalence results are strictly confined to integer-order smoothness. In this paper, we address this problem by focusing on the case of fractional low-order smoothness $1/2<\alpha<2$. In passing, the Besov regularity of functions in $H_{\alpha,\gamma,d}^{\cos}$ with fractional $\alpha<3/2$ has recently been investigated in \cite{ST25}.

The remainder of this paper is organized as follows. Section~\ref{sec:pre} provides a brief review of the weighted half-period cosine space $H_{\alpha,\bsgamma,d}^{\cos}$ and rank-1 lattice point sets. In Section~\ref{sec:equivalence}, we analyze the cases $1/2<\alpha<1$ and $1<\alpha<2$ separately to establish the norm equivalence between the half-period cosine spaces and certain fractional Sobolev spaces involving Slobodeckij-type seminorms. This characterization explicitly clarifies how the fractional regularity dictates the presence or absence of boundary constraints and (non-)periodic structures. We also investigate the limiting behavior of these fractional spaces as the smoothness parameter approaches the integer boundaries. Finally, Sections~\ref{sec:integration} and \ref{sec:approximation} are devoted to the study of numerical integration and function approximation, respectively, in our novel fractional Sobolev spaces, achieved by transferring the results from the half-period cosine settings.

%%%%%%%%%%%%%%%%%%%%%%%%%%%%%%%%%%%%%%%%%%%%%%%%%%
%%%%%%%%%%%%%%%%%%%%%%%%%%%%%%%%%%%%%%%%%%%%%%%%%%
\section{Preliminaries}\label{sec:pre}
Throughout this paper, $\ZZ$, $\NN$, and $\NN_0$ denote the sets of integers, positive integers, and non-negative integers, respectively. For a multi-index $\bsk = (k_1, \ldots, k_d) \in \ZZ^d$ or $\bsk \in \NN_0^d$, we write $\supp(\bsk) \coloneqq \{1 \le j \le d \mid k_j \ne 0\}$ for its support and $|\bsk|_0 \coloneqq |\supp(\bsk)|$ for the number of non-zero components in $\bsk$. For a subset $u \subseteq \{1,\ldots,d\}$, we denote its cardinality by $|u|$ and its complement by $-u \coloneqq \{1,\ldots,d\}\setminus u$.

%%%%%%%%%%%%%%%%%%%%%%%%%%%%%%%%%%%%%%%%%%%%%%%%%%
\subsection{Weighted half-period cosine space}
For $k \in \NN_0$, we define the one-dimensional half-period cosine basis functions on $[0,1]$ as
\[ \phi_k(x) \coloneqq \begin{cases}
        1 & \text{if } k=0,\\
        \sqrt{2}\cos(k\pi x) & \text{if } k \in \NN.
    \end{cases} \]
For the $d$-dimensional case, we define the tensor product basis functions by
\[    \phi_{\bsk}(\bsx) \coloneqq \prod_{j=1}^{d} \phi_{k_j}(x_j)  \]
for $\bsk=(k_1,\ldots,k_d)\in \NN_0^d$ and $\bsx=(x_1,\ldots,x_d)\in [0,1]^d$. It is known that the set $\{\phi_{\bsk} \mid \bsk \in \NN_0^d\}$ forms a complete orthonormal basis for $L_2([0,1]^d)$ (see \cite{DNP14}; see also \cite{IN08} for a discussion of the corresponding basis over the interval $[-1,1]$). Thus, any function $f \in L_2([0,1]^d)$ can be represented by its half-period cosine expansion:
\[  f(\bsx) = \sum_{\bsk \in \NN_0^d} \widetilde{f}(\bsk) \phi_{\bsk}(\bsx),    \]
where the equality holds in the $L_2$-sense, and $\widetilde{f}(\bsk)$ denotes the $\bsk$-th half-period cosine coefficient of $f$, given by
\[  \widetilde{f}(\bsk) \coloneqq \int_{[0,1]^d} f(\bsx) \phi_{\bsk}(\bsx) \rd \bsx. \]

For a smoothness parameter $\alpha>1/2$ and a coordinate-weight parameter $\gamma>0$, we define the one-dimensional weight function
\[ r_{\alpha,\gamma}(k) \coloneqq \begin{cases}
    1 & \text{if } k=0, \\ \gamma^{-1}k^{\alpha} & \text{if } k \in \NN.
\end{cases}\]
For a multi-index $\bsk\in \NN_0^d$, we define its $d$-dimensional counterpart by
\[ r_{\alpha,\bsgamma}(\bsk) \coloneqq \prod_{j=1}^{d}r_{\alpha,\gamma_j}(k_j), \]
where $\bsgamma=(\gamma_1,\gamma_2,\ldots)$ is a sequence of positive weights. Then, the weighted half-period cosine space, denoted by $H_{\alpha,\bsgamma,d}^{\cos}$, is a reproducing kernel Hilbert space equipped with the reproducing kernel
\[ K_{\alpha,\bsgamma,d}^{\cos}(\bsx,\bsy) \coloneqq \sum_{\bsk\in \NN_0^d}\frac{\phi_{\bsk}(\bsx)\phi_{\bsk}(\bsy)}{r_{\alpha,\bsgamma}^2(\bsk)},\]
and the inner product
\[ \langle f,g\rangle_{\alpha,\bsgamma,d}^{\cos} \coloneqq \sum_{\bsk\in \NN_0^d}\widetilde{f}(\bsk)\widetilde{g}(\bsk) r_{\alpha,\bsgamma}^2(\bsk). \]
The induced norm is denoted as $\|f\|_{\alpha,\bsgamma,d}^{\cos} \coloneqq \sqrt{\langle f,f\rangle_{\alpha,\bsgamma,d}^{\cos}}$.

%%%%%%%%%%%%%%%%%%%%%%%%%%%%%%%%%%%%%%%%%%%%%%%%%%
\subsection{Rank-1 lattice rules}\label{subsec:lattice}
Next, we introduce the sampling points used in our numerical integration and function approximation algorithms.

\begin{definition}
    Let $n\in \NN$ with $n\ge 2$ be the number of points, and let $\bsz=(z_1,\ldots,z_d)\in \{0,\ldots,n-1\}^d$ be a generating vector. The rank-1 lattice point set $P_{n,\bsz}$ is defined by
    \[ P_{n,\bsz} \coloneqq \left\{ \bsy_i = \left( \left\{\frac{iz_1}{n}\right\} ,\ldots, \left\{\frac{iz_d}{n}\right\}\right) \mathrel{\bigg|} 0\le i<n \right\}, \]
    where $\{x\} \coloneqq x-\lfloor x\rfloor$ denotes the fractional part of a real number $x$.
\end{definition}

If $n$ is prime and $z_1\ne 0$, the rank-1 lattice point set can be expressed as the intersection $P_{n,\bsz}=\Lambda\cap [0,1)^d$, where $\Lambda$ denotes the Euclidean lattice 
\[ \Lambda \coloneqq T\ZZ^d = \left\{ T\bsk \mid \bsk\in \ZZ^d\right\} \]
with the generator matrix
\[ T \coloneqq \begin{pmatrix} 1/n & 0 & \dots & 0 \\ z_1^{-1}z_2/n & 1 & \dots & 0 \\ \vdots & \vdots & \ddots & \vdots \\ z_1^{-1}z_d/n & 0 & \dots & 1 \end{pmatrix}, \]
and where $z_1^{-1} \in \ZZ_n$ is the multiplicative inverse of $z_1$ modulo $n$. Since the Euclidean lattice can equivalently be represented as
\[ \Lambda = \frac{1}{n}\bsz\ZZ + \ZZ^d, \]
the corresponding dual lattice is straightforwardly characterized as
\begin{align*}
    P_{n,\bsz}^{\perp} \coloneqq \Lambda^{\perp} &= \{ \bsk \in \RR^d \mid \bsk \cdot \bsy \in \ZZ \text{ for all } \bsy \in \Lambda \} \\
    &= \{ \bsk \in \ZZ^d \mid \bsk \cdot \bsz \equiv 0 \pmod n \}.
\end{align*}

To deal with non-periodic functions in the weighted half-period cosine spaces $H_{\alpha,\bsgamma,d}^{\cos}$, we apply the tent transformation to the rank-1 lattice point set. The one-dimensional tent transformation $\varphi \colon [0,1] \to [0,1]$ is defined as
\[ \varphi(x) \coloneqq 1-|2x-1| = \begin{cases}
    2x & \text{if } 0\le x < 1/2,\\
    2-2x & \text{if } 1/2 \le x \le 1.
\end{cases} \]
For a $d$-dimensional vector $\bsx = (x_1, \ldots, x_d)$, the transformation is applied component-wise, i.e., $\varphi(\bsx) \coloneqq (\varphi(x_1), \ldots, \varphi(x_d))$. Then, the tent-transformed rank-1 lattice point set is
\begin{align}\label{eq:tent_lattice}
    P^{\varphi}_{n,\bsz} \coloneqq \varphi(P_{n,\bsz}) = \left\{ \bsx_i = \varphi(\bsy_i) \mid 0 \le i < n \right\}. 
\end{align}

%%%%%%%%%%%%%%%%%%%%%%%%%%%%%%%%%%%%%%%%%%%%%%%%%%
%%%%%%%%%%%%%%%%%%%%%%%%%%%%%%%%%%%%%%%%%%%%%%%%%%
\section{Function spaces and norm equivalence}\label{sec:equivalence}

Here, as one of the main results of this paper, we prove that the half-period cosine spaces with smoothness $1/2<\alpha<2$ are norm-equivalent to certain fractional Sobolev spaces involving Slobodeckij-type seminorms. Although our analysis below can be extended to arbitrary higher-order fractional smoothness $\alpha > 2$ by considering higher-order weak derivatives and appropriate boundary conditions, we restrict our focus to the range $1/2 < \alpha < 2$ in this paper. We refer the reader to \cite{AF03,L23,M85} for comprehensive information on the general theory of fractional Sobolev spaces. 

To formalize our discussion, we first recall the concept of norm equivalence. Let $H(K_1)$ and $H(K_2)$ be two reproducing kernel Hilbert spaces defined on the same domain, equipped with norms $\|\cdot\|_{K_1}$ and $\|\cdot\|_{K_2}$, respectively. We say that $H(K_1)$ and $H(K_2)$ are \emph{norm-equivalent} if they coincide as sets, i.e., $H(K_1) = H(K_2)$, and if there exist positive constants $c$ and $C$ such that
\[ c \|f\|_{K_2} \le \|f\|_{K_1} \le C \|f\|_{K_2} \quad \text{for all } f \in H(K_1). \]

Before moving on to the fractional setting, we review the known results on the norm equivalence for integer-order smoothness from \cite{DNP14,GSY19}.

%%%%%%%%%%%%%%%%%%%%%%%%%%%%%%%%%%%%%%%%%%%%%%%%%%
\subsection{Known results for integer-order smoothness}\label{subsec:known_equivalence}

For simplicity, let us focus on the univariate case; the multivariate counterpart can be straightforwardly derived from the tensor product structure of the spaces. For $\alpha\in \NN$ and $\gamma>0$, let $H_{\alpha,\gamma,1}^{\mathrm{sob}}$ be the reproducing kernel Hilbert space equipped with the reproducing kernel
\[ 
    K_{\alpha,\gamma,1}^{\mathrm{sob}}(x, y) \coloneqq 1 + \gamma^2 \left( \sum_{\tau=1}^{\alpha} \frac{B_\tau(x) B_\tau(y)}{(\tau!)^2} + (-1)^{\alpha+1} \frac{B_{2\alpha}(|x-y|)}{(2\alpha)!} \right),
\]
and the inner product
\begin{align*} 
    \langle f,g \rangle_{\alpha,\gamma,1}^{\mathrm{sob}} & \coloneqq \int_0^1 f(x) \rd x \int_0^1 g(x) \rd x \\
    & \quad + \frac{1}{\gamma^2} \left( \sum_{\tau=1}^{\alpha-1} \int_0^1 f^{(\tau)}(x) \rd x \int_0^1 g^{(\tau)}(x) \rd x + \int_0^1 f^{(\alpha)}(x) g^{(\alpha)}(x) \rd x \right). 
\end{align*}
Here, $B_{\tau}$ denotes the Bernoulli polynomial of degree $\tau$, and $f^{(\tau)}$ denotes the $\tau$-th weak derivative of $f$. This space is commonly referred to as the \emph{unanchored Sobolev space} in the QMC literature \cite{DKS13}. 

Regarding the norm equivalence for the half-period cosine space, \cite{DNP14} established the result for $\alpha=1$, whereas \cite{GSY19} addressed the general integer case $\alpha \ge 1$. In particular, it was proved in \cite[Lemma~1]{GSY19} that the half-period cosine space $H_{\alpha,\gamma,1}^{\cos}$ is norm-equivalent to the unanchored Sobolev space with vanishing boundary conditions on odd-order derivatives, defined by
\[ 
    H_{\alpha,\gamma,1}^{\mathrm{sob}(\mathrm{odd-bdry0})} \coloneqq \left\{ f \in H_{\alpha,\gamma,1}^{\mathrm{sob}} \mathrel{\bigg|} f^{(\tau)}(0) = f^{(\tau)}(1) = 0 \text{ for all odd } \tau < \alpha \right\}. 
\]
Thus, in the special case of $\alpha=1$, the half-period cosine space $H_{1,\gamma,1}^{\cos}$ is norm-equivalent to the full unanchored Sobolev space $H_{1,\gamma,1}^{\mathrm{sob}}$, recovering the earlier result in \cite[Theorem~1]{DNP14}. 

In what follows, we focus on the fractional cases $1/2<\alpha<1$ and $1<\alpha<2$, and introduce the respective Sobolev-type spaces that are norm-equivalent to the half-period cosine spaces in these ranges.

%%%%%%%%%%%%%%%%%%%%%%%%%%%%%%%%%%%%%%%%%%%%%%%%%%
\subsection{The case of $1/2 < \alpha < 1$}
For the lower fractional regime $1/2 < \alpha < 1$, functions in the half-period cosine space $H_{\alpha,\gamma,1}^{\cos}$ need not possess a first-order weak derivative in $L_2([0,1])$, but rather exhibit fractional smoothness. To ensure that the characterized physical representation continuously connects to the integer case $\alpha=1$ as $\alpha \to 1^{-}$, we introduce a normalized, periodized Slobodeckij-type bilinear form. 

Let $\Pcal^{\cos} \coloneqq \operatorname{span}\left\{ \phi_k \mid k\in \NN_0\right\}$ denote the space of finite half-period cosine expansions. For $f,g\in \Pcal^{\cos}$, define 
\[
    \langle f,g \rangle_{\alpha,\gamma,1}^{\mathrm{sob}} \coloneqq \int_0^1 f(x) \rd x \int_0^1 g(x) \rd x + \frac{1}{\gamma^2} \Bcal_{\alpha}(f, g),
\]
where the Slobodeckij-type bilinear form $\Bcal_{\alpha}(f, g)$ is scaled by a specific regularizing factor that vanishes at $\alpha=1$:
\begin{align*}
    \Bcal_{\alpha}(f, g) & \coloneqq \frac{(1-\alpha)\pi^3}{8} \\
    & \quad \times \int_0^1 \int_0^1 \left( \frac{(f(x)-f(y))(g(x)-g(y))}{\left|\sin\left(\frac{\pi(x-y)}{2}\right)\right|^{2\alpha+1}} + \frac{(f(x)-f(y))(g(x)-g(y))}{\left|\sin\left(\frac{\pi(x+y)}{2}\right)\right|^{2\alpha+1}} \right)\rd x \rd y.
\end{align*}
We define $H_{\alpha,\gamma,1}^{\sob}$ as the completion of $\Pcal^{\cos}$ with respect to the norm induced by this inner product.

The normalization factor $(1-\alpha)$ is motivated by the Bourgain--Brezis--Mironescu (BBM) principle adapted to the periodic setting \cite{BBM01,BBM02,MS02}: as $\alpha \to 1^{-}$, it compensates for the
increasing singularity of the kernel. For sufficiently regular fixed functions, the resulting bilinear form converges to the classical Dirichlet form $\int_0^1 f'(x)g'(x) \rd x$. Accordingly, the normalization is chosen so that the fractional Sobolev-type norm connects continuously to the first-order unanchored Sobolev norm at $\alpha=1$. This limiting behavior for the induced norms will be proved in Theorem~\ref{thm:bbm_limiting}.

The following theorem identifies the completion introduced above with the corresponding half-period cosine space for $1/2<\alpha<1$.

\begin{theorem}\label{thm:equivalence1}
    For $1/2 < \alpha < 1$, the identity map on $\Pcal^{\cos}$ extends uniquely to an isomorphism between $H_{\alpha,\gamma,1}^{\cos}$ and $H_{\alpha,\gamma,1}^{\mathrm{sob}}$. Thus, after this canonical identification, the two spaces coincide as sets, and their norms are equivalent with constants independent of $\gamma$ and $f$.
\end{theorem}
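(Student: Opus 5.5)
The plan is to diagonalize the bilinear form $\Bcal_\alpha$ in the half-period cosine basis. We show that on $\Pcal^{\cos}$
\[ \Bcal_\alpha(f,g)=\sum_{k\in\NN}\lambda_k\,\widetilde f(k)\widetilde g(k), \]
with $c\,k^{2\alpha}\le \lambda_k\le C\,k^{2\alpha}$ for all $k\ge1$, where $c,C$ depend only on $\alpha$. Since $\int_0^1 f\rd x=\widetilde f(0)$ and $\Bcal_\alpha$ annihilates constants, this gives
\[ \min(1,c)\,\|f\|^{\cos}_{\alpha,\gamma,1}\le\|f\|^{\sob}_{\alpha,\gamma,1}\le\max(1,C)\,\|f\|^{\cos}_{\alpha,\gamma,1} \]
on $\Pcal^{\cos}$, with constants independent of $\gamma$. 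Because $\Pcal^{\cos}$ is dense in $H^{\cos}_{\alpha,\gamma,1}$, and $H^{\sob}_{\alpha,\gamma,1}$ is by definition its completion, the identity extends uniquely to an isomorphism. The equivalent norms on the common dense subspace give the same Cauchy sequences, so the completions coincide.

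The key step is an unfolding trick. Extend $f\in\Pcal^{\cos}$ evenly to $[-1,1]$ and then $2$-periodically, calling the result $F$. A cosine series is even and $2$-periodic, so $F(x)=f(x)$, $F(-x)=f(x)$, and $F$ is smooth. In the second integral substitute $y\mapsto -y$, so that $f(y)=F(-y)$ and $\sin(\pi(x+y)/2)$ becomes $\sin(\pi(x-y')/2)$ with $y'\in[-1,0]$. The two integrals then combine into
\[ \Bcal_\alpha(f,g)=\frac{(1-\alpha)\pi^3}{8}\int_0^1\int_{-1}^1\frac{(F(x)-F(y))(G(x)-G(y))}{|\sin(\pi(x-y)/2)|^{2\alpha+1}}\rd y\rd x. \]
By evenness of the integrand under $(x,y)\mapsto(-x,-y)$, this equals half of the same integral over $[-1,1]^2$. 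That is a periodic Slobodeckij form on the circle $\RR/2\ZZ$ with a translation-invariant kernel $\kappa(t)=|\sin(\pi t/2)|^{-2\alpha-1}$.

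Now expand $F(x)=\widetilde f(0)+\sqrt2\sum_k\widetilde f(k)\cos(k\pi x)$ in the exponentials $e^{\mi k\pi x}$, which are orthogonal on $[-1,1]$. The standard computation gives
\[ \int\!\!\int (F(x)-F(y))(G(x)-G(y))\kappa(x-y)\rd x\rd y = \sum_{m\in\ZZ}\hat F(m)\overline{\hat G(m)}\cdot 2\cdot 2\int_{-1}^1(1-\cos(m\pi t))\kappa(t)\rd t, \]
up to normalization constants that I will track carefully. Substituting $t=x-y$ and using periodicity makes each $y$-integral a full period. This yields
\[ \lambda_k=c_\alpha\int_0^1\frac{1-\cos(k\pi t)}{\sin(\pi t/2)^{2\alpha+1}}\rd t \]
for an explicit $c_\alpha>0$. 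The cross terms between different cosines vanish by orthogonality, so the form is indeed diagonal in $\phi_k$.

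The main obstacle is the two-sided estimate $\lambda_k\asymp k^{2\alpha}$, which I would handle as follows. Since $\sin(\pi t/2)\asymp t$ on $(0,1]$, we have $\lambda_k\asymp\int_0^1(1-\cos(k\pi t))t^{-2\alpha-1}\rd t$. Substituting $s=kt$ turns this into $k^{2\alpha}\int_0^k(1-\cos(\pi s))s^{-2\alpha-1}\rd s$. The last integral is increasing in $k$, bounded below by its value at $k=1$, which is positive, and bounded above by the integral over $(0,\infty)$. That integral is finite because near $0$ the integrand is $O(s^{1-2\alpha})$ with $2\alpha<2$, and at infinity it is $O(s^{-2\alpha-1})$. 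This also uses $\alpha<1$ only through convergence near zero, and $(1-\alpha)$ is a harmless positive constant for fixed $\alpha$. Hence $c\,k^{2\alpha}\le\lambda_k\le Ck^{2\alpha}$, and the theorem follows.
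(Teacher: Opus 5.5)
Your proposal is correct and is essentially the paper's proof: the same even-extension unfolding onto $[-1,1]^2$ (Lemma~\ref{lem:extension_trick}), diagonalization of $\Bcal_\alpha$ in the cosine basis, the two-sided comparison $t\le\sin(\pi t/2)\le\pi t/2$ followed by the scaling $s=kt$, and the same density/completion argument. The only differences are cosmetic: you get the off-diagonal vanishing from translation invariance of the kernel on $\RR/2\ZZ$ rather than from the paper's explicit substitution $u=(x+y)/2$, $v=(x-y)/2$, and your $\lambda_k=\frac{(1-\alpha)\pi^3}{2}\int_0^1 \frac{1-\cos(k\pi t)}{|\sin(\pi t/2)|^{2\alpha+1}}\rd t$ coincides with the paper's diagonal value after $t=2v$. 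One small fix: $\min(1,c)$ and $\max(1,C)$ bound the \emph{squared} norms, so the unsquared inequality needs their square roots.
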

\noindent The proof of Theorem~\ref{thm:equivalence1} will be provided in Subsection~\ref{subsec:proof}.

For the multivariate case, the corresponding weighted fractional Sobolev space, which is norm-equivalent to the weighted half-period cosine space $H_{\alpha,\bsgamma,d}^{\cos}$, is defined via the Hilbert tensor product structure as
\[ H_{\alpha,\bsgamma,d}^{\mathrm{sob}} \coloneqq \bigotimes_{j=1}^{d} H_{\alpha,\gamma_j,1}^{\mathrm{sob}}. \]

\begin{remark}
For $1/2<\alpha<1$, the bilinear form introduced above is equivalent to the classical Sobolev--Slobodeckij seminorm. Indeed, by using
\[ \left| \sin\left( \frac{\pi(x-y)}{2}\right) \right| \le \frac{\pi}{2}|x-y|, \]
as well as 
\[    \left|\sin\left(\frac{\pi(x+y)}{2}\right)\right| \ge \left|\sin\left(\frac{\pi(x-y)}{2}\right)\right|  \ge |x-y|, \]
for any $x,y\in [0,1]$, we obtain
\[
    \frac{(1-\alpha)\pi^3}{8}
    \left(\frac{2}{\pi}\right)^{2\alpha+1}
    |f|_{W^{\alpha,2}([0,1])}^2
    \leq
    \Bcal_\alpha(f,f)
    \leq
    \frac{(1-\alpha)\pi^3}{4}
    |f|_{W^{\alpha,2}([0,1])}^2,
\]
where 
\[ |f|_{W^{\alpha,2}([0,1])}^2 = \int_0^1 \int_0^1 \frac{|f(x)-f(y)|^2}{|x-y|^{2\alpha+1}}\rd x \rd y.\]
Together with the standard fractional Poincar\'{e} inequality and the density of finite cosine expansions in $W^{\alpha,2}([0,1])$, these estimates imply that, for every fixed $\gamma>0$, $H_{\alpha,\gamma,1}^{\sob}$ coincides as a set with the classical fractional Sobolev space $W^{\alpha,2}([0,1])$. The purpose of the periodized form $\Bcal_\alpha$ is therefore not to introduce a new underlying function space, but to provide an equivalent norm adapted to the half-period cosine basis. This representation also motivates the natural higher-order construction for $1<\alpha<2$ introduced in the next subsection. We finally note that related embedding results between several classical fractional smoothness spaces and spaces of Riemann--Liouville type can be found in \cite{GDMS26,MGxx,ST25}.
\end{remark}

%%%%%%%%%%%%%%%%%%%%%%%%%%%%%%%%%%%%%%%%%%%%%%%%%%
\subsection{The case of $1<\alpha<2$}
For the higher fractional range $1 < \alpha < 2$, functions in the half-period cosine space $H_{\alpha,\gamma,1}^{\cos}$ possess a first-order weak derivative $f' \in L_2([0,1])$, while $f'$ itself exhibits fractional smoothness of order $\alpha - 1 \in (0,1)$. In addition to the standard construction of higher-order fractional Sobolev spaces (see, e.g., \cite[Chapter~5]{M85}), we explicitly include the first-order derivative term in the inner product and penalize the fractional remainder of $f'$. 

Recall that $\Pcal^{\cos} \coloneqq \operatorname{span}\left\{ \phi_k \mid k\in \NN_0\right\}$ denotes the space of finite half-period cosine expansions. For $f,g\in \Pcal^{\cos}$, define
\[
    \langle f,g \rangle_{\alpha,\gamma,1}^{\mathrm{sob}} \coloneqq \int_0^1 f(x) \rd x \int_0^1 g(x) \rd x + \frac{1}{\gamma^2} \left( \int_0^1 f'(x)g'(x) \rd x + \Bcal_{\alpha}(f', g') \right),
\]
where the fractional bilinear form for the derivatives is modulated by vanishing factors at both endpoints:
\begin{align*}
    \Bcal_{\alpha}(f', g') & \coloneqq \frac{(\alpha-1)(2-\alpha)\pi^3}{8} \\
    & \quad \times \int_0^1 \int_0^1 \left( \frac{(f'(x)-f'(y))(g'(x)-g'(y))}{\left|\sin\left(\frac{\pi(x-y)}{2}\right)\right|^{2\alpha-1}} + \frac{(f'(x)+f'(y))(g'(x)+g'(y))}{\left|\sin\left(\frac{\pi(x+y)}{2}\right)\right|^{2\alpha-1}} \right) \rd x \rd y.
\end{align*}
Note that the positive sign in the numerator of the second term naturally arises from the odd-reflection property of the derivative $f'$, contrasting with the even-reflection applied to the function itself in the lower smoothness case $1/2 < \alpha < 1$. We define $H_{\alpha,\gamma,1}^{\sob}$ as the completion of $\Pcal^{\cos}$ with respect to the norm induced by this inner product.

As in the case with $1/2<\alpha<1$, the normalization factor $(\alpha - 1)(2 - \alpha)$, motivated by the BBM principle, is chosen to provide the appropriate limiting behavior at both integer endpoints. More precisely, for sufficiently regular fixed functions, one expects the following behavior:
\begin{itemize}
        \item As $\alpha \to 1^{+}$, the contribution of $\Bcal_\alpha(f',g')$ vanishes, and the inner product approaches that of $H_{1,\gamma,1}^{\mathrm{sob}}$.
        \item As $\alpha \to 2^{-}$, the bilinear form $\Bcal_\alpha(f',g')$ approaches $\int_0^1 f''(x)g''(x) \rd x$. Moreover, for smoothness $\alpha>3/2$, the boundary conditions $f'(0)=f'(1)=0$ arise naturally from the cosine representation.
\end{itemize}
We will prove these limiting behaviors for the induced norms later in Theorem~\ref{thm:bbm_limiting}.

The following theorem identifies the completion introduced above with the corresponding half-period cosine space for $1<\alpha<2$.

\begin{theorem}\label{thm:equivalence2}
    For $1 < \alpha < 2$, the identity map on $\Pcal^{\cos}$ extends uniquely to an isomorphism between $H_{\alpha,\gamma,1}^{\cos}$ and $H_{\alpha,\gamma,1}^{\mathrm{sob}}$. Thus, after this canonical identification, the two spaces coincide as sets, and their norms are equivalent with constants independent of $\gamma$ and $f$.
\end{theorem}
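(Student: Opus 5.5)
The plan is to compute the Sobolev-type inner product exactly on the cosine basis, i.e.\ to diagonalize it. For $f=\sum_k \widetilde f(k)\phi_k\in\Pcal^{\cos}$ we have $\int_0^1 f=\widetilde f(0)$ and $f'(x)=-\sum_{k\ge1}\widetilde f(k)\sqrt2\,k\pi\sin(k\pi x)$. Since the $\sin(k\pi x)$, $k\ge1$, are orthogonal in $L_2([0,1])$, we get $\int_0^1 f'g'=\sum_{k\ge1}\pi^2k^2\widetilde f(k)\widetilde g(k)$. The key observation concerns the form $\Bcal_\alpha(f',g')$. Extend $f'$ oddly to $[-1,1]$ and then $2$-periodically; call the result $F$. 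Substituting $y\mapsto -y$ turns the second integrand, with its plus sign, into the first one on $[0,1]\times[-1,0]$. Indeed $F(x)-F(-y)=f'(x)+f'(y)$ and $|\sin(\pi(x+y)/2)|=|\sin(\pi(x-(-y))/2)|$. Hence
\[
\Bcal_\alpha(f',g')=\frac{c_\alpha}{2}\int_{-1}^{1}\int_{-1}^{1}\frac{(F(x)-F(y))(G(x)-G(y))}{|\sin(\pi(x-y)/2)|^{2\alpha-1}}\rd x\rd y,
\qquad c_\alpha=\frac{(\alpha-1)(2-\alpha)\pi^3}{8},
\]
where the factor $1/2$ accounts for the symmetry of the full square under $(x,y)\mapsto(-x,-y)$. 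This is a translation-invariant quadratic form on the circle $\RR/2\ZZ$.

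The next step is to diagonalize this form. The kernel depends only on $x-y$, and $\sin(k\pi x)$ is a combination of $e^{\pm \mi k\pi x}$. Writing $F(x)-F(y)$, expanding, and using translation invariance, we get
\[
\Bcal_\alpha(f',g')=\sum_{k\ge1}\lambda_{\alpha}(k)\,\pi^2k^2\,\widetilde f(k)\widetilde g(k),
\qquad
\lambda_\alpha(k)=C\,c_\alpha\int_{-1}^{1}\frac{1-\cos(k\pi t)}{|\sin(\pi t/2)|^{2\alpha-1}}\rd t ,
\]
for an explicit constant $C$. This integral converges because $2\alpha-1<3$, and for $\alpha>1$ the singularity $|t|^{1-2\alpha}$ is integrable against $1-\cos$, which is $O(t^2)$. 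The same computation applies to Theorem~\ref{thm:equivalence1} with an even extension, which explains the minus sign there.

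The main obstacle is the two-sided estimate $\lambda_\alpha(k)\asymp k^{2\alpha-2}$ uniformly in $k\ge1$. For the upper bound, use $|\sin(\pi t/2)|\ge|t|$ on $[-1,1]$. Then split the integral at $|t|=1/k$: on $|t|\le 1/k$ bound $1-\cos(k\pi t)\le (k\pi t)^2/2$, and on $|t|>1/k$ bound $1-\cos(k\pi t)\le 2$. This gives $\lambda_\alpha(k)\lesssim k^{2\alpha-2}$. For the lower bound, use $|\sin(\pi t/2)|\le \pi|t|/2$ and restrict to the set $|t|\le 1/k$, where $1-\cos(k\pi t)\gtrsim (kt)^2$; this gives a contribution $\gtrsim k^2\cdot k^{-(4-2\alpha)}=k^{2\alpha-2}$. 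Alternatively, compare with the exact Fourier multiplier $\int_{\RR}(1-\cos(k\pi t))|t|^{1-2\alpha}\rd t = c'k^{2\alpha-2}$. The constants depend on $\alpha$ but not on $k$. Because the normalization $c_\alpha>0$ is fixed for each $\alpha\in(1,2)$, we conclude that
\[
\langle f,f\rangle^{\sob}=\widetilde f(0)^2+\gamma^{-2}\sum_{k\ge1}\pi^2k^2\bigl(1+\lambda_\alpha(k)\bigr)\widetilde f(k)^2 .
\]
Moreover, $k^2(1+\lambda_\alpha(k))\asymp k^2+k^{2\alpha}\asymp k^{2\alpha}$, with constants depending only on $\alpha$.

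It follows that $c\|f\|^{\cos}_{\alpha,\gamma,1}\le\|f\|^{\sob}_{\alpha,\gamma,1}\le C\|f\|^{\cos}_{\alpha,\gamma,1}$ on $\Pcal^{\cos}$, where $\gamma$ enters identically on both sides and so $c,C$ are independent of $\gamma$. Since $\Pcal^{\cos}$ is dense in $H^{\cos}_{\alpha,\gamma,1}$, and $H^{\sob}_{\alpha,\gamma,1}$ is by definition its completion, the identity extends uniquely to an isomorphism. It remains to check that the completion consists of genuine functions. The sob-norm dominates the $L_2$-norm, since $\sum\widetilde f(k)^2\le \max(1,\gamma^2)\|f\|^2$. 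Hence Cauchy sequences converge in $L_2$, and the abstract completion embeds injectively into $L_2([0,1])$, where it coincides with $H^{\cos}_{\alpha,\gamma,1}$.
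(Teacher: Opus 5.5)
Your proposal is correct and follows essentially the same route as the paper. Both arguments reduce $\Bcal_\alpha(f',g')$ via the odd extension (Lemma~\ref{lem:extension_trick}) to a form on $[-1,1]^2$, show that it is diagonal in the cosine basis with entries that are $\asymp k^{2\alpha}$ once $\pi^2k^2$ is added, and then extend from the dense subspace $\Pcal^{\cos}$ to the completions. The differences are only in execution: you get diagonality and the diagonal values from translation invariance on $\RR/2\ZZ$ (your $\pi^2k^2\lambda_\alpha(k)$ agrees exactly with the paper's $2(\alpha-1)(2-\alpha)\pi^5k^2\int_0^{1/2}\sin^2(\pi kv)\,|\sin(\pi v)|^{1-2\alpha}\rd v$), whereas the paper uses the change of variables $u=(x+y)/2$, $v=(x-y)/2$ together with symmetry about $v=1/2$, and you bound the diagonal entries by splitting at $|t|=1/k$ rather than by the rescaling $t=kv$.
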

\noindent The proof of Theorem~\ref{thm:equivalence2} will be provided in Subsection~\ref{subsec:proof}.

Similarly to the lower smoothness case, for the multivariate case, we define
\[ H_{\alpha,\bsgamma,d}^{\mathrm{sob}} \coloneqq \bigotimes_{j=1}^{d} H_{\alpha,\gamma_j,1}^{\mathrm{sob}}, \]
where the tensor product is understood in the Hilbert-space sense. By
Theorem~\ref{thm:equivalence2} and the tensor product structure, this space is norm-equivalent to $H_{\alpha,\bsgamma,d}^{\cos}$.

%%%%%%%%%%%%%%%%%%%%%%%%%%%%%%%%%%%%%%%%%%%%%%%%%%
\subsection{Proofs of the norm-equivalence theorems}\label{subsec:proof}

Here, we provide the proofs of Theorems~\ref{thm:equivalence1} and \ref{thm:equivalence2}. A crucial step in evaluating the Slobodeckij-type bilinear forms $\Bcal_{\alpha}$ is to transform the integrals over the unit square $[0,1]^2$ into a single integral over $[-1,1]^2$ via symmetric extensions. For a function $f \in L_2([0,1])$, we define its even extension $f_E$ and odd extension $f_O$ onto $[-1,1]$ by
\[
    f_E(x) \coloneqq \begin{cases} f(x) & \text{if } x \in [0,1], \\ f(-x) & \text{if } x \in [-1,0), \end{cases} \quad \text{and} \quad
    f_O(x) \coloneqq \begin{cases} f(x) & \text{if } x \in [0,1], \\ -f(-x) & \text{if } x \in [-1,0). \end{cases}
\]

The following lemma establishes how these extensions simplify the target double integrals. The results follow from a straightforward domain partitioning of the square $[-1,1]^2$ into four quadrants and employing the respective symmetry properties of $f_E$ and $f_O$; hence, the detailed proof is omitted.

\begin{lemma} \label{lem:extension_trick}
    Let $\beta > 0$ and $f,g$ be measurable functions for which all the integrals below are absolutely convergent.
    \begin{enumerate}
        \item Under the even extension, it holds that
        \begin{align*}
            &\int_0^1 \int_0^1 \left(\frac{(f(x)-f(y))(g(x)-g(y))}{\left|\sin\left(\frac{\pi(x-y)}{2}\right)\right|^{\beta}} + \frac{(f(x)-f(y))(g(x)-g(y))}{\left|\sin\left(\frac{\pi(x+y)}{2}\right)\right|^{\beta}}\right) \rd x \rd y \\
            &= \frac{1}{2} \int_{-1}^1 \int_{-1}^1 \frac{(f_E(x)-f_E(y))(g_E(x)-g_E(y))}{\left|\sin\left(\frac{\pi(x-y)}{2}\right)\right|^{\beta}} \rd x \rd y.
        \end{align*}
        \item Under the odd extension, it holds that
        \begin{align*}
            &\int_0^1 \int_0^1 \left(\frac{(f(x)-f(y))(g(x)-g(y))}{\left|\sin\left(\frac{\pi(x-y)}{2}\right)\right|^{\beta}} + \frac{(f(x)+f(y))(g(x)+g(y))}{\left|\sin\left(\frac{\pi(x+y)}{2}\right)\right|^{\beta}} \right) \rd x \rd y \\
            &= \frac{1}{2} \int_{-1}^1 \int_{-1}^1 \frac{(f_O(x)-f_O(y))(g_O(x)-g_O(y))}{\left|\sin\left(\frac{\pi(x-y)}{2}\right)\right|^{\beta}} \rd x \rd y.
        \end{align*}
    \end{enumerate}
\end{lemma}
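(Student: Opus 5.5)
We will prove both identities by splitting the square $[-1,1]^2$ into its four closed quadrants
\[
Q_{++}=[0,1]^2,\quad Q_{--}=[-1,0]^2,\quad Q_{+-}=[0,1]\times[-1,0],\quad Q_{-+}=[-1,0]\times[0,1].
\]
Their overlaps have measure zero. On each quadrant we substitute $x\mapsto -x$ and/or $y\mapsto -y$ so that the integral becomes one over $[0,1]^2$. The absolute convergence assumption justifies splitting the integral and changing variables. The kernel $\kappa(t)\coloneqq|\sin(\pi t/2)|^{-\beta}$ is even, i.e.\ $\kappa(-t)=\kappa(t)$; this is the only property of the sine we use. Write $F(x,y)\coloneqq (f_\ast(x)-f_\ast(y))(g_\ast(x)-g_\ast(y))$ with $\ast\in\{E,O\}$, so that the right-hand side is $\tfrac12\int_{-1}^1\int_{-1}^1 F(x,y)\kappa(x-y)\rd x\rd y$.

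\textbf{Diagonal quadrants.} On $Q_{++}$ we have $f_\ast=f$ and $g_\ast=g$, so the contribution is exactly $\int_0^1\int_0^1 (f(x)-f(y))(g(x)-g(y))\kappa(x-y)\rd x\rd y$. On $Q_{--}$ we substitute $(x,y)\mapsto(-x,-y)$. In the even case $f_E(-x)=f(x)$. In the odd case $f_O(-x)=-f(x)$ for a.e.\ $x$, so each difference factor changes sign, and the product of the two factors is unchanged. Since $\kappa(-x+y)=\kappa(x-y)$, the $Q_{--}$ contribution equals the $Q_{++}$ contribution. Together the diagonal quadrants give twice the first integral on the left.

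\textbf{Off-diagonal quadrants.} On $Q_{+-}$ we substitute $y\mapsto -y$, so the kernel becomes $\kappa(x+y)$.
\begin{itemize}
\item In the even case, $f_E(x)-f_E(-y)=f(x)-f(y)$, which gives $\int_0^1\int_0^1 (f(x)-f(y))(g(x)-g(y))\kappa(x+y)\rd x\rd y$.
\item In the odd case, $f_O(x)-f_O(-y)=f(x)+f(y)$, which produces the plus-sign numerators $(f(x)+f(y))(g(x)+g(y))$.
\end{itemize}
On $Q_{-+}$ we substitute $x\mapsto -x$, which gives the same integrand with the roles of $x$ and $y$ swapped. In the even case the numerator is symmetric under swapping $x$ and $y$, because both factors flip sign. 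In the odd case it is symmetric trivially. The kernel $\kappa(x+y)$ is symmetric as well. Hence the $Q_{-+}$ contribution equals the $Q_{+-}$ contribution.

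Summing the four quadrants, the full integral over $[-1,1]^2$ equals twice the left-hand side. Multiplying by $\tfrac12$ gives both claimed identities. The only point requiring care is the sign bookkeeping in the odd case, so that the quadrant pieces produce exactly $f(x)+f(y)$ rather than a difference.
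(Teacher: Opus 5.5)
Your proposal is correct and takes the same approach as the paper, which only says the identities follow from partitioning $[-1,1]^2$ into four quadrants and using the symmetries of $f_E$ and $f_O$, and omits the details. You supply exactly those details: reflecting each quadrant onto $[0,1]^2$, using the evenness of $t\mapsto|\sin(\pi t/2)|^{-\beta}$, tracking the signs that produce $f(x)+f(y)$ in the odd case, and invoking absolute convergence to justify the splitting and changes of variables.
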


%%%%%%%%%%%%%%%%%%%%%%%%%%%%%%%%%%%%%%%%%%%%%%%%%%
\subsubsection{Proof of Theorem~\ref{thm:equivalence1} ($1/2 < \alpha < 1$)}
Recall that the set $\{\phi_k \mid k \in \NN_0\}$ forms a complete orthonormal basis for $L_2([0,1])$. Thus, any function $f \in H_{\alpha,\gamma,1}^{\cos}$ uniquely possesses the half-period cosine expansion
\[ f(x) = \sum_{k=0}^{\infty} \widetilde{f}(k) \phi_k(x). \]
The following lemma computes the action of the bilinear form $\Bcal_{\alpha}(\cdot, \cdot)$ on pairs of these basis functions for the lower fractional regime.
\begin{lemma}\label{lem:bilinear1}
    Let $\{\phi_k\mid k\in \NN_0\}$ be the set of half-period cosine functions, and let $\alpha\in (1/2,1)$. For $k,\ell\in \NN_0$, we have
    \begin{enumerate}
        \item If $k=0$ or $\ell=0$, $\Bcal_{\alpha}(\phi_k,\phi_\ell)=0$.
        \item If $k\neq \ell$, $\Bcal_{\alpha}(\phi_k,\phi_\ell)=0$.
        \item If $k=\ell>0$, there exist $0<c_{\alpha}<C_{\alpha}$, independent of $k$, such that
        \[ c_{\alpha}k^{2\alpha}\le \Bcal_{\alpha}(\phi_k,\phi_k) \le C_{\alpha}k^{2\alpha}. \]
    \end{enumerate}
\end{lemma}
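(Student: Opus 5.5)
The plan is to use Lemma~\ref{lem:extension_trick}(1) to move everything to the circle. The even extension of $\phi_k$ to $[-1,1]$ is $\sqrt{2}\cos(k\pi x)$, which is $2$-periodic. Hence
\[ \Bcal_\alpha(\phi_k,\phi_\ell)=\frac{(1-\alpha)\pi^3}{16}\int_{-1}^1\int_{-1}^1 \frac{(\phi_k(x)-\phi_k(y))(\phi_\ell(x)-\phi_\ell(y))}{|\sin(\pi(x-y)/2)|^{2\alpha+1}}\rd x\rd y, \]
with $\phi_k,\phi_\ell$ now read as their (periodic) even extensions. The kernel $\kappa(t):=|\sin(\pi t/2)|^{-2\alpha-1}$ is $2$-periodic and even in $t$. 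Absolute convergence holds because $|\phi_k(x)-\phi_k(y)|\lesssim k|x-y|$ and $2\alpha+1-2<1$.

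Part~1 is immediate, since $\phi_0$ is constant and its differences vanish. For general $k,\ell$, expand the product and use periodicity together with the substitution $y=x+t$ over one period:
\[ \int\!\!\int (\phi_k(x)-\phi_k(x+t))(\phi_\ell(x)-\phi_\ell(x+t))\kappa(t)\rd x\rd t. \]
The inner $x$-integral of $\cos(k\pi x)\cos(\ell\pi x)$, $\cos(k\pi x)\cos(\ell\pi(x+t))$, and so on, over a full period $[-1,1]$ vanishes unless $k=\ell$. This follows from orthogonality of the exponentials $e^{\mathrm{i}m\pi x}$, $m\in\ZZ$, on $[-1,1]$, after writing each cosine as a sum of two exponentials. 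That gives part~2.

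For $k=\ell>0$ the same computation gives
\[ \int_{-1}^1 (\phi_k(x)-\phi_k(x+t))^2\rd x = 2\cdot 2\bigl(1-\cos(k\pi t)\bigr). \]
Hence
\[ \Bcal_\alpha(\phi_k,\phi_k)=\frac{(1-\alpha)\pi^3}{4}\int_{-1}^1\frac{1-\cos(k\pi t)}{|\sin(\pi t/2)|^{2\alpha+1}}\rd t, \]
up to verifying the exact constant. It remains to show $I_k:=\int_0^1 (1-\cos(k\pi t))\,|\sin(\pi t/2)|^{-2\alpha-1}\rd t\asymp k^{2\alpha}$. We use $t\le\sin(\pi t/2)\le \pi t/2$ on $[0,1]$, which compares $I_k$ with
\[ J_k:=\int_0^1 (1-\cos(k\pi t))\,t^{-2\alpha-1}\rd t \]
up to constants depending only on $\alpha$. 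Substituting $s=kt$ yields
\[ J_k=k^{2\alpha}\int_0^k (1-\cos(\pi s))\,s^{-2\alpha-1}\rd s. \]
The last integral is increasing in $k$ and bounded between its value at $k=1$, which is positive, and $\int_0^\infty(1-\cos\pi s)s^{-2\alpha-1}\rd s$. The latter is finite since $2\alpha+1\in(2,3)$: the integrand is $O(s^{1-2\alpha})$ near $0$ with $1-2\alpha>-1$, and $O(s^{-2\alpha-1})$ at infinity. This gives $c_\alpha,C_\alpha$ independent of $k$.

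The main obstacle is bookkeeping rather than analysis. One must check that the cross terms in part~2 really cancel after the periodic shift; this uses that $\kappa$ depends only on $x-y$ mod $2$, which holds for $|\sin|$. One must also check that the reduction from Lemma~\ref{lem:extension_trick} is legitimate for smooth $\phi_k$, which is fine by absolute convergence. The restriction $\alpha>1/2$ is not even needed for this lemma. The condition $\alpha<1$ is what guarantees convergence of the near-zero part of the integral.
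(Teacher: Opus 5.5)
Your proof is correct, and it reaches the same one-dimensional integral as the paper by a genuinely different mechanism.

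The paper stays on $[-1,1]^2$. It applies product-to-sum identities and passes to $u=(x+y)/2$, $v=(x-y)/2$. It integrates in $u$ over the $v$-dependent interval $[-1+|v|,1-|v|]$, then uses oddness about $v=1/2$ to kill the off-diagonal terms and the symmetry $v\mapsto 1-v$ to collapse the weight $1-v+\sin(2\pi kv)/(2\pi k)$. This yields $\Bcal_\alpha(\phi_k,\phi_k)=2(1-\alpha)\pi^3\int_0^{1/2}\sin^2(\pi kv)\,|\sin(\pi v)|^{-2\alpha-1}\rd v$.

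You instead regard $[-1,1]$ as the circle $\RR/2\ZZ$ and observe that the integrand is biperiodic. The measure-preserving shear $(x,y)\mapsto(x,y-x)$ of the torus then turns $\Bcal_\alpha(\phi_k,\phi_\ell)$ into $\int_{-1}^1\kappa(t)\int_{-1}^1(\phi_k(x)-\phi_k(x+t))(\phi_\ell(x)-\phi_\ell(x+t))\rd x\rd t$. Diagonality is then just orthogonality of exponentials, i.e.\ $\Bcal_\alpha$ is a Fourier multiplier on the circle.

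Your diagonal formula $\frac{(1-\alpha)\pi^3}{4}\int_{-1}^1(1-\cos(k\pi t))\,|\sin(\pi t/2)|^{-2\alpha-1}\rd t$ has the right constant. It coincides with the paper's expression via $t=2v$ and $1-\cos2\theta=2\sin^2\theta$. From there, your two-sided bounds (Jordan's inequality, the scaling $s=kt$, monotonicity in $k$) match the paper's up to bookkeeping.

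What each route buys: yours replaces the hand computation and the two symmetry tricks by translation invariance. It also gives Lemma~\ref{lem:bilinear2} essentially verbatim, since the odd extension of $\phi_k'$ is again $2$-periodic. The paper's route avoids having to justify the torus change of variables.

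One point needs repair: your absolute-convergence estimate. The bound $|\phi_k(x)-\phi_k(y)|\lesssim k|x-y|$ does not control the kernel's second singularity at $x-y=\pm2$, i.e.\ at the corners $(1,-1)$ and $(-1,1)$ of $[-1,1]^2$. Use instead $|\phi_k(x)-\phi_k(y)|\lesssim k\operatorname{dist}(x-y,2\ZZ)$, which holds by periodicity and is exactly what the torus picture requires.

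Your remark that $\alpha>1/2$ plays no role in this lemma is also correct; the argument only needs $0<\alpha<1$.
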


\begin{proof}
    Since the $0$-th mode $\phi_0(x) = 1$, it is easy to verify that the first item of the claim, i.e., $\Bcal_{\alpha}(\phi_0, \cdot) = 0$, holds true.
    
    For $k \in \NN$, the even extension of $\phi_k(x) = \sqrt{2}\cos(\pi k x)$ onto $[-1,1]$ is simply $\phi_k(x)$ itself, which forms a part of the standard Fourier basis on $[-1,1]$ (see, e.g., \cite{IN08}). By the first item of Lemma~\ref{lem:extension_trick}, the bilinear form $\Bcal_{\alpha}(\phi_k, \phi_{\ell})$ simplifies to an integral over $[-1,1]^2$. For any $k,\ell\in \NN$ with $k\neq \ell$, we have
    \begin{align*}
        \Bcal_{\alpha}(\phi_k, \phi_{\ell}) & = \frac{(1-\alpha)\pi^3}{8}\int_{-1}^1 \int_{-1}^1 \frac{(\cos(\pi kx)-\cos(\pi ky))(\cos(\pi \ell x)-\cos(\pi \ell y))}{\left|\sin\left(\frac{\pi(x-y)}{2}\right)\right|^{2\alpha+1}} \rd x \rd y\\
        & = \frac{(1-\alpha)\pi^3}{2}\int_{-1}^1 \int_{-1}^1 \frac{\sin\left(\frac{\pi k(x-y)}{2}\right)\sin\left(\frac{\pi k(x+y)}{2}\right)\sin\left(\frac{\pi \ell (x-y)}{2}\right)\sin\left(\frac{\pi \ell(x+y)}{2}\right)}{\left|\sin\left(\frac{\pi(x-y)}{2}\right)\right|^{2\alpha+1}} \rd x \rd y\\
        & = (1-\alpha)\pi^3 \int_{-1}^1 \frac{\sin(\pi kv)\sin(\pi \ell v)}{|\sin(\pi v)|^{2\alpha+1}} \left( \int_{-1+|v|}^{1-|v|} \sin(\pi ku)\sin(\pi \ell u) \rd u \right) \rd v\\
        & = 2(-1)^{k+\ell}(1-\alpha)\pi^3 \int_{0}^1 \frac{\sin(\pi kv)\sin(\pi \ell v)}{|\sin(\pi v)|^{2\alpha+1}} \left( \frac{\sin(\pi(k+\ell)v)}{\pi(k+\ell)}-\frac{\sin(\pi(k-\ell)v)}{\pi(k-\ell)}  \right) \rd v,
    \end{align*}
    where we applied the change of variables $u=(x+y)/2$ and $v=(x-y)/2$ in the third equality. Since the integrand in the last expression is odd with respect to $v=1/2$, $\Bcal_{\alpha}(\phi_k, \phi_{\ell})=0$, proving the second item. 

    In the case of $k=\ell>0$, we have
    \begin{align*}
        \Bcal_{\alpha}(\phi_k, \phi_k) & = (1-\alpha)\pi^3 \int_{-1}^1 \frac{\sin^2(\pi kv)}{|\sin(\pi v)|^{2\alpha+1}} \left( \int_{-1+|v|}^{1-|v|} \sin^2(\pi ku) \rd u \right) \rd v\\
        & = 2(1-\alpha)\pi^3 \int_{0}^1 \frac{\sin^2(\pi kv)}{|\sin(\pi v)|^{2\alpha+1}} \left( 1 - v + \frac{\sin(2\pi kv)}{2\pi k}  \right) \rd v\\
        & = 2(1-\alpha)\pi^3 \int_{0}^{1/2} \frac{\sin^2(\pi kv)}{|\sin(\pi v)|^{2\alpha+1}} \rd v\\
        & = \frac{2(1-\alpha)\pi^3}{k}\int_{0}^{k/2}  \frac{\sin^2(\pi t)}{|\sin(\pi t/k)|^{2\alpha+1}} \rd t.
    \end{align*}
    As it holds that $\sin x\le x$ for any $x\ge 0$, $\Bcal_{\alpha}(\phi_k, \phi_k) $ is bounded below by
    \begin{align*}
        \Bcal_{\alpha}(\phi_k, \phi_k) & \ge \frac{2(1-\alpha)\pi^3}{k}\int_{0}^{k/2}  \frac{\sin^2(\pi t)}{|\pi t/k|^{2\alpha+1}} \rd t \\
        & = 2(1-\alpha)\pi^{2-2\alpha} k^{2\alpha}\int_{0}^{k/2} \frac{\sin^2(\pi t)}{t^{2\alpha+1}} \rd t \\
        & \ge 2(1-\alpha)\pi^{2-2\alpha} k^{2\alpha} \int_{0}^{1/2} \frac{\sin^2(\pi t)}{t^{2\alpha+1}} \rd t,
    \end{align*}
    where the convergence of the integral is guaranteed since the integrand behaves like $t^{1-2\alpha}$ near the origin, and $1-2\alpha\in(-1,0)$ for $\alpha\in(1/2,1)$.
    On the other hand, by $\sin x\ge 2x/\pi$ for any $0\le x\le \pi/2$, $\Bcal_{\alpha}(\phi_k, \phi_k) $ is bounded above by
    \begin{align*}
        \Bcal_{\alpha}(\phi_k, \phi_k) & \le \frac{2(1-\alpha)\pi^3}{k}\int_{0}^{k/2} \frac{\sin^2(\pi t)}{|2t/k|^{2\alpha+1}} \rd t\\
        & = \frac{(1-\alpha)\pi^3}{2^{2\alpha}} k^{2\alpha}\int_{0}^{k/2} \frac{\sin^2(\pi t)}{t^{2\alpha+1}} \rd t \\
        & \le \frac{(1-\alpha)\pi^3}{2^{2\alpha}} k^{2\alpha}\int_{0}^{\infty} \frac{\sin^2(\pi t)}{t^{2\alpha+1}} \rd t.
    \end{align*}
    Thus, we proved the last item of this lemma with constants
    \[ c_{\alpha}=2(1-\alpha)\pi^{2-2\alpha} \int_{0}^{1/2} \frac{\sin^2(\pi t)}{t^{2\alpha+1}} \rd t,\quad \text{and}\quad C_{\alpha}= \frac{(1-\alpha)\pi^3}{2^{2\alpha}}\int_{0}^{\infty} \frac{\sin^2(\pi t)}{t^{2\alpha+1}} \rd t. \]
    This completes the proof of the last item.
\end{proof}

We are now ready to prove Theorem~\ref{thm:equivalence1}.

\begin{proof}[Proof of Theorem~\ref{thm:equivalence1}]
    We first establish the norm equivalence on the dense subspace $\Pcal^{\cos}$. Let
    \[  p(x) = \sum_{k=0}^{N}\widetilde{p}(k)\phi_k(x) \in \Pcal^{\cos}. \]
    By the definition of the half-period cosine norm, we have
    \begin{align*}
        (\| p\|_{\alpha,\gamma,1}^{\cos})^2 = \sum_{k=0}^{N}|\widetilde{p}(k)|^2 r_{\alpha,\gamma}^2(k) = |\widetilde{p}(0)|^2 + \frac{1}{\gamma^2}\sum_{k=1}^{N}|\widetilde{p}(k)|^2 k^{2\alpha}.
    \end{align*}

    We next consider the Sobolev-type norm of $p$. Since the cosine expansion of $p$ is finite, the bilinearity of $\Bcal_\alpha$ and Lemma~\ref{lem:bilinear1} give
    \begin{align*}
        (\| p \|_{\alpha,\gamma,1}^{\sob})^2 & = \left(\int_0^1 p(x) \rd x \right)^2 + \frac{1}{\gamma^2} \Bcal_{\alpha}(p, p)\\
        & = |\widetilde{p}(0)|^2+\frac{1}{\gamma^2}\sum_{k,\ell=0}^{N}\widetilde{p}(k)\widetilde{p}(\ell)\Bcal_{\alpha}(\phi_k, \phi_{\ell}) \\
        & = |\widetilde{p}(0)|^2+\frac{1}{\gamma^2}\sum_{k=1}^{N}|\widetilde{p}(k)|^2\Bcal_{\alpha}(\phi_k, \phi_k),
    \end{align*}
    where, in the last equality, we used     $\Bcal_\alpha(\phi_0,\phi_\ell)=0$ for every $\ell\in\NN_0$ and     $\Bcal_\alpha(\phi_k,\phi_\ell)=0$ whenever $k\neq\ell$.

    Substituting the mode-wise bounds from Lemma~\ref{lem:bilinear1}, we obtain
    \[ |\widetilde{p}(0)|^2+\frac{c_{\alpha}}{\gamma^2}\sum_{k=1}^{N}|\widetilde{p}(k)|^2k^{2\alpha} \le (\| p \|_{\alpha,\gamma,1}^{\sob})^2\le |\widetilde{p}(0)|^2+\frac{C_{\alpha}}{\gamma^2}\sum_{k=1}^{N}|\widetilde{p}(k)|^2k^{2\alpha}. \]
    It follows that
    \[ \min\{1,c_{\alpha}\}(\| p\|_{\alpha,\gamma,1}^{\cos})^2\le (\| p \|_{\alpha,\gamma,1}^{\sob})^2\le \max\{1,C_{\alpha}\}(\| p\|_{\alpha,\gamma,1}^{\cos})^2. \]
    Hence, the cosine norm and the Sobolev-type norm are equivalent on $\Pcal^{\cos}$, with constants depending only on $\alpha$.

    The upper estimate shows that the identity map
    \[  \operatorname{id}\colon
        \left(\Pcal^{\cos},\|\cdot\|_{\alpha,\gamma,1}^{\cos}\right)
        \longrightarrow
        \left(\Pcal^{\cos},\|\cdot\|_{\alpha,\gamma,1}^{\sob}\right)
    \]
    is bounded, whereas the lower estimate shows that the identity map in the opposite direction is also bounded. Since $\Pcal^{\cos}$ is dense in $H_{\alpha,\gamma,1}^{\cos}$ and, by definition, in $H_{\alpha,\gamma,1}^{\sob}$, these two identity maps extend uniquely to bounded linear maps 
    \[
        T\colon H_{\alpha,\gamma,1}^{\cos} \longrightarrow H_{\alpha,\gamma,1}^{\sob} \quad \text{and}\quad S\colon H_{\alpha,\gamma,1}^{\sob} \longrightarrow H_{\alpha,\gamma,1}^{\cos},
    \]
    respectively. Since both compositions $S\circ T$ and $T\circ S$
    agree with the identity map on the dense subspace $\Pcal^{\cos}$,
    they are the identity maps on the corresponding completions.
    Therefore, $T$ is an isomorphism with inverse $S$. Consequently, after this canonical identification, the two spaces coincide as sets. Moreover, by continuity, the two-sided estimate extends from $\Pcal^{\cos}$ to the completions, and hence
    \[ \min\{1,c_{\alpha}\}(\| f\|_{\alpha,\gamma,1}^{\cos})^2\le (\| f \|_{\alpha,\gamma,1}^{\sob})^2\le \max\{1,C_{\alpha}\}(\| f\|_{\alpha,\gamma,1}^{\cos})^2 \]
    for every $f\in H_{\alpha,\gamma,1}^{\cos}=H_{\alpha,\gamma,1}^{\sob}$. The equivalence constants are independent of $\gamma$ and $f$.
\end{proof}

\begin{remark}\label{rem:tractability_transfer1}
    For the multivariate case, due to the tensor product construction and the mode-wise bounds established in the proof of Theorem~\ref{thm:equivalence1}, the multivariate norms satisfy the following explicit two-sided bounds:
    \[ \| f \|_{\alpha,c_{\alpha}^{-1/2}\bsgamma,d}^{\cos} \le \| f \|_{\alpha,\bsgamma,d}^{\mathrm{sob}} \le \| f \|_{\alpha,C_{\alpha}^{-1/2}\bsgamma,d}^{\cos}, \]
    where the positive constants $c_{\alpha}$ and $C_{\alpha}$ are given as in Lemma~\ref{lem:bilinear1}. Consequently, various tractability properties for multivariate integration and function approximation can be effortlessly transferred from the results for the weighted half-period cosine spaces to our newly introduced fractional Sobolev spaces by a simple scaling of the weight parameters.
\end{remark}

%%%%%%%%%%%%%%%%%%%%%%%%%%%%%%%%%%%%%%%%%%%%%%%%%%
\subsubsection{Proof of Theorem~\ref{thm:equivalence2} ($1 < \alpha < 2$)}

For the higher fractional smoothness range $\alpha\in (1,2)$, the bilinear form in the inner product of $H_{\alpha,\gamma,1}^{\mathrm{sob}}$ explicitly involves the first-order weak derivatives of the functions. As justified in Remark~\ref{rem:derivative_regularity} below, given the half-period cosine expansion of $f \in H_{\alpha,\gamma,1}^{\cos}$, its weak derivative $f'$ is given by the half-period sine expansion
\[ f'(x) = \sum_{k=1}^{\infty}\widetilde{f}(k) \phi'_k(x) = -\sqrt{2}\pi \sum_{k=1}^{\infty}k \widetilde{f}(k) \sin(\pi kx). \]
The following lemma computes the action of the bilinear form $\Bcal_{\alpha}(\cdot, \cdot)$ on pairs of these derivatives of the basis functions.

\begin{lemma}\label{lem:bilinear2}
    Let $\{\phi_k \mid k \in \NN_0\}$ be the set of half-period cosine basis functions, and let $\alpha \in (1,2)$. For any $k,\ell \in \NN$, we have:
    \begin{enumerate}
        \item If $k \neq \ell$, then $\Bcal_{\alpha}(\phi'_k,\phi'_\ell) = 0$.
        \item If $k = \ell$, there exist positive constants $c'_{\alpha}$ and $C'_{\alpha}$, independent of $k$, such that
        \[ c'_{\alpha}k^{2\alpha} \le \Bcal_{\alpha}(\phi'_k,\phi'_k) \le C_{\alpha}'k^{2\alpha}. \]
    \end{enumerate}
\end{lemma}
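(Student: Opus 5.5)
We mirror the proof of Lemma~\ref{lem:bilinear1}, replacing the even extension by the odd one. For $k\in\NN$, $\phi'_k(x)=-\sqrt{2}\pi k\sin(\pi kx)$, and its odd extension to $[-1,1]$ is the same formula. By the second item of Lemma~\ref{lem:extension_trick} with $\beta=2\alpha-1$,
\[
\Bcal_\alpha(\phi'_k,\phi'_\ell)=\frac{(\alpha-1)(2-\alpha)\pi^3}{16}\cdot 2\pi^2k\ell\int_{-1}^1\int_{-1}^1\frac{(\sin(\pi kx)-\sin(\pi ky))(\sin(\pi\ell x)-\sin(\pi\ell y))}{\left|\sin\left(\frac{\pi(x-y)}{2}\right)\right|^{2\alpha-1}}\rd x\rd y .
\]
We use $\sin a-\sin b=2\cos\frac{a+b}{2}\sin\frac{a-b}{2}$ and change variables to $u=(x+y)/2$, $v=(x-y)/2$, with Jacobian $2$. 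The domain becomes $|u|+|v|\le 1$, and the integrand factorizes as
\[
\frac{\sin(\pi kv)\sin(\pi\ell v)}{|\sin(\pi v)|^{2\alpha-1}}\cos(\pi ku)\cos(\pi\ell u).
\]

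\emph{Orthogonality ($k\ne\ell$).} The inner $u$-integral over $[-1+|v|,1-|v|]$ is explicit:
\[
\frac{\sin(\pi(k+\ell)(1-|v|))}{\pi(k+\ell)}+\frac{\sin(\pi(k-\ell)(1-|v|))}{\pi(k-\ell)} .
\]
Since $\sin(\pi m(1-v))=(-1)^{m+1}\sin(\pi mv)$ and $k\pm\ell$ have the same parity, this equals $(-1)^{k+\ell+1}$ times a combination of $\sin(\pi(k\pm\ell)v)$. We reduce to $v\in[0,1]$ using evenness in $v$. Under the map $v\mapsto 1-v$, $\sin(\pi kv)\sin(\pi\ell v)$ acquires the factor $(-1)^{k+\ell}$, each $\sin(\pi(k\pm\ell)v)$ acquires $(-1)^{k+\ell+1}$, and $|\sin(\pi v)|$ is invariant. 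Hence the total integrand is odd about $v=1/2$ and integrates to zero, exactly as in Lemma~\ref{lem:bilinear1}. Absolute convergence near $v=0$ needs checking, but it is easy: the numerator is $O(v^2)$ and the exponent satisfies $2\alpha-1<3$.

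\emph{Diagonal ($k=\ell$).} The inner integral is
\[
\int_{-1+|v|}^{1-|v|}\cos^2(\pi ku)\rd u=1-|v|+\frac{\sin(2\pi k(1-|v|))}{2\pi k}=1-|v|-\frac{\sin(2\pi kv)}{2\pi k}.
\]
Using the symmetry $v\mapsto 1-v$ of $\sin^2(\pi kv)/|\sin(\pi v)|^{2\alpha-1}$, the term $1-v$ averages to $1/2$ and the oscillatory term $\sin(2\pi kv)$ cancels, being odd about $1/2$. This gives
\[
\Bcal_\alpha(\phi'_k,\phi'_k)=c\,(\alpha-1)(2-\alpha)k^2\int_0^{1/2}\frac{\sin^2(\pi kv)}{\sin^{2\alpha-1}(\pi v)}\rd v
\]
for an explicit absolute constant $c$. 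Substituting $t=kv$ turns the integral into $k^{-1}\int_0^{k/2}\sin^2(\pi t)/\sin^{2\alpha-1}(\pi t/k)\rd t$. Using $2x/\pi\le\sin x\le x$ on $[0,\pi/2]$ sandwiches this between multiples of
\[
k^{2\alpha-2}\int_0^{k/2}\frac{\sin^2(\pi t)}{t^{2\alpha-1}}\rd t .
\]
Hence $\Bcal_\alpha(\phi'_k,\phi'_k)\asymp k^{2\alpha}\int_0^{k/2}\sin^2(\pi t)\,t^{1-2\alpha}\rd t$.

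\emph{Main obstacle.} The delicate step is making the last integral uniformly bounded above and below in $k$. Near $0$ the integrand behaves like $t^{3-2\alpha}$, which is integrable. At infinity it behaves like $t^{1-2\alpha}$ with $2\alpha-1>1$, so it is integrable precisely because $\alpha>1$. Thus $\int_0^{1/2}\le\int_0^{k/2}\le\int_0^\infty<\infty$, which gives $c'_\alpha$ and $C'_\alpha$. The decay at infinity is exactly what requires $\alpha>1$; in the lower regime of Lemma~\ref{lem:bilinear1} the corresponding role was played instead by the singularity at the origin.
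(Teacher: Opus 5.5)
Your proposal is correct and follows essentially the same route as the paper: the odd extension via the second item of Lemma~\ref{lem:extension_trick}, the change of variables $u=(x+y)/2$, $v=(x-y)/2$, oddness of the integrand about $v=1/2$ for $k\neq\ell$, reduction of the diagonal term to $\int_0^{1/2}$, the substitution $t=kv$, and the sandwich $2x/\pi\le\sin x\le x$ combined with $\int_0^{1/2}\le\int_0^{k/2}\le\int_0^\infty$. Your integrability checks go slightly beyond the paper's brief remark: near $v=0$ the condition is $\alpha<2$, and as $t\to\infty$ it is $\alpha>1$.
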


\begin{proof}
    For $k \in \NN$, the odd extension of $\phi'_k(x) = -\sqrt{2}\pi k \sin(\pi k x)$ onto $[-1,1]$ is simply $\phi'_k(x)$ itself. By applying the second item of Lemma~\ref{lem:extension_trick}, the bilinear form $\Bcal_{\alpha}(\phi'_k, \phi'_{\ell})$ simplifies to an integral over the square $[-1,1]^2$. For any $k,\ell \in \NN$ with $k \neq \ell$, we have
    \begin{align*}
        & \Bcal_{\alpha}(\phi'_k, \phi'_{\ell}) \\
        & = \frac{(\alpha-1)(2-\alpha)\pi^3}{16} \int_{-1}^1 \int_{-1}^1 \frac{(\phi'_k(x)-\phi'_k(y))(\phi'_{\ell}(x)-\phi'_{\ell}(y))}{\left|\sin\left(\frac{\pi(x-y)}{2}\right)\right|^{2\alpha-1}} \rd x \rd y\\
        & = \frac{(\alpha-1)(2-\alpha)\pi^5}{8} k\ell \int_{-1}^1 \int_{-1}^1 \frac{(\sin(\pi kx)-\sin(\pi ky))(\sin(\pi \ell x)-\sin(\pi \ell y))}{\left|\sin\left(\frac{\pi(x-y)}{2}\right)\right|^{2\alpha-1}} \rd x \rd y\\
        & = \frac{(\alpha-1)(2-\alpha)\pi^5}{2} k\ell \int_{-1}^1 \int_{-1}^1 \frac{\sin\left(\frac{\pi k(x-y)}{2}\right)\cos\left(\frac{\pi k(x+y)}{2}\right)\sin\left(\frac{\pi \ell(x-y)}{2}\right)\cos\left(\frac{\pi \ell(x+y)}{2}\right)}{\left|\sin\left(\frac{\pi(x-y)}{2}\right)\right|^{2\alpha-1}} \rd x \rd y\\
        & = (\alpha-1)(2-\alpha)\pi^5 k\ell \int_{-1}^1 \frac{\sin(\pi kv)\sin(\pi \ell v)}{|\sin(\pi v)|^{2\alpha-1}} \left( \int_{-1+|v|}^{1-|v|} \cos(\pi ku)\cos(\pi \ell u) \rd u \right) \rd v\\
        & = 2(-1)^{k+\ell+1}(\alpha-1)(2-\alpha)\pi^5 k\ell \int_{0}^1 \frac{\sin(\pi kv)\sin(\pi \ell v)}{|\sin(\pi v)|^{2\alpha-1}} \left( \frac{\sin(\pi(k+\ell)v)}{\pi(k+\ell)}+\frac{\sin(\pi(k-\ell)v)}{\pi(k-\ell)}  \right) \rd v,
    \end{align*}
    where we applied the change of variables $u=(x+y)/2$ and $v=(x-y)/2$ in the fourth equality. Since the integrand in the last expression is odd with respect to $v=1/2$, $\Bcal_{\alpha}(\phi'_k, \phi'_{\ell})=0$, which proves the first item.

    In the diagonal case where $k=\ell>0$, the same change of variables leads to
    \begin{align*}
        \Bcal_{\alpha}(\phi'_k, \phi'_k) & = (\alpha-1)(2-\alpha)\pi^5 k^2 \int_{-1}^1 \frac{\sin^2(\pi kv)}{|\sin(\pi v)|^{2\alpha-1}} \left( \int_{-1+|v|}^{1-|v|} \cos^2(\pi ku) \rd u \right) \rd v\\
        & = 2(\alpha-1)(2-\alpha)\pi^5 k^2 \int_{0}^1 \frac{\sin^2(\pi kv)}{|\sin(\pi v)|^{2\alpha-1}} \left( 1 - v - \frac{\sin(2\pi kv)}{2\pi k} \right) \rd v\\
        & = 2(\alpha-1)(2-\alpha)\pi^5 k^2 \int_{0}^{1/2} \frac{\sin^2(\pi kv)}{|\sin(\pi v)|^{2\alpha-1}} \rd v\\
        & = 2(\alpha-1)(2-\alpha)\pi^5 k \int_{0}^{k/2} \frac{\sin^2(\pi t)}{|\sin(\pi t/k)|^{2\alpha-1}} \rd t,
    \end{align*}
    where we have used the symmetry around $v=1/2$ and substituted $t = kv$.
    The rest of the argument is essentially the same as that of Lemma~\ref{lem:bilinear1}, and we obtain a lower bound
    \[ \Bcal_{\alpha}(\phi'_k, \phi'_k) \ge 2(\alpha-1)(2-\alpha)\pi^{6-2\alpha}k^{2\alpha}\int_{0}^{1/2} \frac{\sin^2(\pi t)}{t^{2\alpha-1}} \rd t, \]
    as well as an upper bound
    \[ \Bcal_{\alpha}(\phi'_k, \phi'_k) \le \frac{(\alpha-1)(2-\alpha)\pi^5}{2^{2\alpha-2}}k^{2\alpha}\int_{0}^{\infty} \frac{\sin^2(\pi t)}{t^{2\alpha-1}} \rd t.\]
    Here, the convergence of the integrals is guaranteed since the integrand behaves like $t^{3-2\alpha}$ near the origin, with $3-2\alpha\in(-1,1)$ for $\alpha\in(1,2)$, and the decay at infinity is of order $t^{1-2\alpha}$ with $2\alpha-1 > 1$. Thus, defining the $k$-independent constants
    \[ c'_{\alpha} \coloneqq 2(\alpha-1)(2-\alpha)\pi^{6-2\alpha}\int_{0}^{1/2} \frac{\sin^2(\pi t)}{t^{2\alpha-1}} \rd t, \quad \text{and} \quad C'_{\alpha} \coloneqq \frac{(\alpha-1)(2-\alpha)\pi^5}{2^{2\alpha-2}}\int_{0}^{\infty} \frac{\sin^2(\pi t)}{t^{2\alpha-1}} \rd t, \]
    we establish the second item. This completes the proof of the lemma.
\end{proof}

We are now ready to prove Theorem~\ref{thm:equivalence2}.

\begin{proof}[Proof of Theorem~\ref{thm:equivalence2}]
    The proof strategy is the same as that of Theorem~\ref{thm:equivalence1}. We first establish the norm equivalence on the dense subspace $\Pcal^{\cos}$. Let
    \[
        p(x)=\sum_{k=0}^{N}\widetilde{p}(k)\phi_k(x)
        \in\Pcal^{\cos}.
    \]
    By the definition of the half-period cosine norm, we have
    \begin{align*}
        (\| p\|_{\alpha,\gamma,1}^{\cos})^2 = \sum_{k=0}^{N}|\widetilde{p}(k)|^2 r_{\alpha,\gamma}^2(k) = |\widetilde{p}(0)|^2 + \frac{1}{\gamma^2}\sum_{k=1}^{N}|\widetilde{p}(k)|^2 k^{2\alpha}.
    \end{align*}

    We next consider the Sobolev-type norm of $p$. Since the cosine
    expansion of $p$ is finite, it can be differentiated term by term:
    \[
        p'(x) = \sum_{k=1}^{N}\widetilde{p}(k)\phi_k'(x) = -\sqrt{2}\pi \sum_{k=1}^{N} k\widetilde{p}(k)\sin(\pi kx).
    \]
    Hence, by the orthogonality of the sine functions,
    \[
        \int_0^1 (p'(x))^2\rd x = \pi^2 \sum_{k=1}^{N}k^2|\widetilde{p}(k)|^2.
    \]
    Moreover, the bilinearity of $\Bcal_\alpha$ and
    Lemma~\ref{lem:bilinear2} give
    \begin{align*}
        \Bcal_\alpha(p',p') = \sum_{k,\ell=1}^{N}\widetilde{p}(k)\widetilde{p}(\ell)\Bcal_\alpha(\phi_k',\phi_\ell') = \sum_{k=1}^{N}|\widetilde{p}(k)|^2\Bcal_\alpha(\phi_k',\phi_k'),
    \end{align*}
    where, in the last equality, we used $\Bcal_\alpha(\phi_k',\phi_\ell')=0$ whenever $k\neq \ell$.
    Consequently,
    \begin{align*}
        (\|p\|_{\alpha,\gamma,1}^{\sob})^2
        & =
        \left(\int_0^1p(x)\rd x\right)^2
        +
        \frac{1}{\gamma^2}
        \left(
            \int_0^1(p'(x))^2\rd x
            +
            \Bcal_\alpha(p',p')
        \right) \\
        &=
        |\widetilde{p}(0)|^2
        +
        \frac{1}{\gamma^2}
        \sum_{k=1}^{N}
        |\widetilde{p}(k)|^2
        \left(
            \pi^2k^2+
            \Bcal_\alpha(\phi_k',\phi_k')
        \right).
    \end{align*}

    By the mode-wise bounds established in
    Lemma~\ref{lem:bilinear2}, we have
    \[
        c'_\alpha k^{2\alpha}
        \leq
        \Bcal_\alpha(\phi_k',\phi_k')
        \leq
        C'_\alpha k^{2\alpha}.
    \]
    Since $k^2\leq k^{2\alpha}$ for every $k\in\NN$ and
    $\alpha>1$, it follows that
    \[
        c'_\alpha k^{2\alpha}
        \leq
        \pi^2k^2+\Bcal_\alpha(\phi_k',\phi_k')
        \leq
        (\pi^2+C'_\alpha)k^{2\alpha}.
    \]
    Defining
    \[
        C''_\alpha\coloneqq\pi^2+C'_\alpha,
    \]
    we therefore obtain
    \begin{align*}
        |\widetilde{p}(0)|^2
        +
        \frac{c'_\alpha}{\gamma^2}
        \sum_{k=1}^{N}
        |\widetilde{p}(k)|^2k^{2\alpha}
        \leq
        (\|p\|_{\alpha,\gamma,1}^{\sob})^2 
        \leq
        |\widetilde{p}(0)|^2
        +
        \frac{C''_\alpha}{\gamma^2}
        \sum_{k=1}^{N}
        |\widetilde{p}(k)|^2k^{2\alpha}.
    \end{align*}
    It follows that
    \[
        \min\{1,c'_\alpha\}
        (\|p\|_{\alpha,\gamma,1}^{\cos})^2
        \leq
        (\|p\|_{\alpha,\gamma,1}^{\sob})^2
        \leq
        \max\{1,C''_\alpha\}
        (\|p\|_{\alpha,\gamma,1}^{\cos})^2.
    \]
    Hence, the cosine norm and the Sobolev-type norm are equivalent on
    $\Pcal^{\cos}$, with constants depending only on $\alpha$.

    The remainder of the argument is identical to that in the proof of Theorem~\ref{thm:equivalence1}. Indeed, the two-sided estimate above shows that the identity maps on $\Pcal^{\cos}$ in both directions are bounded. Hence, they extend uniquely to bounded linear maps between $H_{\alpha,\gamma,1}^{\cos}$ and $H_{\alpha,\gamma,1}^{\sob}$. These extensions are inverse to each other, since their compositions agree with the identity map on the dense subspace $\Pcal^{\cos}$. Therefore, after the resulting canonical identification, the two spaces coincide as sets. Moreover, by continuity, the two-sided estimate extends to the completions, and hence
    \[
        \min\{1,c'_\alpha\}
        (\|f\|_{\alpha,\gamma,1}^{\cos})^2
        \leq
        (\|f\|_{\alpha,\gamma,1}^{\sob})^2
        \leq
        \max\{1,C''_\alpha\}
        (\|f\|_{\alpha,\gamma,1}^{\cos})^2
    \]
    for every $f\in H_{\alpha,\gamma,1}^{\cos}=H_{\alpha,\gamma,1}^{\sob}$. The equivalence constants are independent of $\gamma$ and $f$.
\end{proof}

\begin{remark}\label{rem:derivative_regularity}
    Let $f\in H_{\alpha,\gamma,1}^{\cos}$ with $1<\alpha<2$, and let
    $f_N(x) \coloneqq \sum_{k=0}^{N}\widetilde f(k)\phi_k(x)$. Since
    \begin{align*}
        \|f_N'-f_M'\|_{L_2([0,1])}^2 = \pi^2\sum_{k=M+1}^{N}        k^2|\widetilde f(k)|^2 \leq \pi^2 \sum_{k=M+1}^{N} k^{2\alpha}|\widetilde f(k)|^2,
    \end{align*}
    the sequence $(f_N')_{N\in\NN}$ is Cauchy in $L_2([0,1])$. Moreover, $f_N\to f$ in $L_2([0,1])$. Hence, there exists $g\in L_2([0,1])$ such that $f_N'\to g$ in $L_2([0,1])$, and the standard characterization of weak derivatives implies that $g$ is the first-order weak derivative of $f$. Consequently,
    \[
        f'(x)
        =
        \sum_{k=1}^{\infty}
        \widetilde f(k)\phi_k'(x)
    \]
    in the $L_2$-sense. In particular,
    \[
        \int_0^1(f'(x))^2\rd x
        =
        \pi^2
        \sum_{k=1}^{\infty}
        k^2|\widetilde f(k)|^2.
    \]

    If $\alpha>3/2$, then the Cauchy--Schwarz inequality gives
    \begin{align*}
        \sum_{k=1}^{\infty}
        k|\widetilde f(k)|
        &\leq
        \left(
            \sum_{k=1}^{\infty}
            k^{2\alpha}|\widetilde f(k)|^2
        \right)^{1/2}
        \left(
            \sum_{k=1}^{\infty}
            k^{2-2\alpha}
        \right)^{1/2} \\
        &\leq
        \gamma
        \|f\|_{\alpha,\gamma,1}^{\cos}
        \sqrt{\zeta(2\alpha-2)}
        <\infty.
    \end{align*}
    Therefore, the series representing $f'$ converges absolutely and
    uniformly on $[0,1]$. Since
    \[
        \phi_k'(0)=\phi_k'(1)=0
        \qquad\text{for every }k\in\NN,
    \]
    it follows that $f'$ has a continuous representative satisfying $f'(0)=f'(1)=0$.
\end{remark}

\begin{remark}\label{rem:tractability_transfer2}
    Similar to Remark~\ref{rem:tractability_transfer1}, the multivariate norms for $1 < \alpha < 2$ satisfy the following explicit two-sided bounds:
    \[ \| f \|_{\alpha,(c'_{\alpha})^{-1/2}\bsgamma,d}^{\cos} \le \| f \|_{\alpha,\bsgamma,d}^{\mathrm{sob}} \le \| f \|_{\alpha,(C''_{\alpha})^{-1/2}\bsgamma,d}^{\cos}, \]
    where the positive constants $c'_{\alpha}$ and $C''_{\alpha} = \pi^2 + C'_{\alpha}$ are given as above. Consequently, various tractability properties for multivariate integration and function approximation in the higher smoothness regime can be effortlessly transferred from the results for the weighted half-period cosine spaces to our newly introduced fractional Sobolev spaces by a simple scaling of the weight parameters.
\end{remark}

%%%%%%%%%%%%%%%%%%%%%%%%%%%%%%%%%%%%%%%%%%%%%%%%%%
\subsection{Limiting behavior and the BBM-type theorem}

Here, we formalize the limiting behavior of the fractional Sobolev norm as the smoothness parameter $\alpha$ approaches the integer boundaries $1$ and $2$. The resulting BBM-type formulas show how the fractional norms connect to the first-order unanchored Sobolev space at $\alpha=1$ and to a boundary-constrained second-order Sobolev space at $\alpha=2$.

For every $\alpha\in(1/2,2)$ and $k\in\NN$, we have
$k^{2\alpha}\leq k^4$, and therefore $\|f\|_{\alpha,\gamma,1}^{\cos} \leq \|f\|_{2,\gamma,1}^{\cos}$ for all $f\in H_{2,\gamma,1}^{\cos}$. Thus, $H_{2,\gamma,1}^{\cos}$ is continuously embedded in $H_{\alpha,\gamma,1}^{\cos}$ for every $\alpha \in (1/2,2)$.
Through the canonical identifications established in
Theorems~\ref{thm:equivalence1} and~\ref{thm:equivalence2}, we may
therefore regard every $f\in H_{2,\gamma,1}^{\cos}$ as an element of
$H_{\alpha,\gamma,1}^{\sob}$ for each
$\alpha\in(1/2,1)\cup(1,2)$.

\begin{theorem}\label{thm:bbm_limiting}
    Let $\gamma>0$ and $f\in H_{2,\gamma,1}^{\cos}$. Then the squared
    Sobolev-type norms have the following endpoint limits:
    \begin{enumerate}
        \item As $\alpha \to 1^-$ and $\alpha \to 1^+$, 
        \[
            \lim_{\alpha \to 1^-} (\| f \|_{\alpha,\gamma,1}^{\mathrm{sob}})^2 = \lim_{\alpha \to 1^+} (\| f \|_{\alpha,\gamma,1}^{\mathrm{sob}})^2 = \left( \int_0^1 f(x) \rd x \right)^2 + \frac{1}{\gamma^2} \int_0^1 (f'(x))^2 \rd x = (\| f \|_{1,\gamma,1}^{\mathrm{sob}})^2.
        \]
        \item As $\alpha\to2^-$,
        \[
            \lim_{\alpha\to2^-}(\|f\|_{\alpha,\gamma,1}^{\sob})^2 = \left(\int_0^1 f(x) \rd x\right)^2 + \frac{1}{\gamma^2}\left[ \int_0^1 (f'(x))^2 \rd x  + \int_0^1 (f''(x))^2 \rd x \right].
        \]
        Here, the limiting norm is understood on the boundary-constrained space $H_{2,\gamma,1}^{\sob(\mathrm{odd\text{-}bdry0})}$ rather than on the full second-order unanchored Sobolev space $H_{2,\gamma,1}^{\sob}$.
    \end{enumerate}
\end{theorem}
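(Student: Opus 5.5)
The plan is to work entirely on the spectral side. The proofs of Theorems~\ref{thm:equivalence1} and~\ref{thm:equivalence2} give exact diagonal formulas, with $\lambda_\alpha(k)\coloneqq\Bcal_\alpha(\phi_k,\phi_k)$ for $\alpha<1$ and $\lambda_\alpha(k)\coloneqq\Bcal_\alpha(\phi_k',\phi_k')$ for $\alpha>1$:
\[
(\|f\|_{\alpha,\gamma,1}^{\sob})^2=|\widetilde f(0)|^2+\frac{1}{\gamma^2}\sum_{k\ge1}|\widetilde f(k)|^2\lambda_\alpha(k)\quad(1/2<\alpha<1),
\]
\[
(\|f\|_{\alpha,\gamma,1}^{\sob})^2=|\widetilde f(0)|^2+\frac{1}{\gamma^2}\sum_{k\ge1}|\widetilde f(k)|^2\bigl(\pi^2k^2+\lambda_\alpha(k)\bigr)\quad(1<\alpha<2).
\]
These formulas hold on $\Pcal^{\cos}$ and extend to all $f\in H_{2,\gamma,1}^{\cos}$ by continuity. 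On the target side, $\int_0^1 (f')^2=\pi^2\sum k^2|\widetilde f(k)|^2$ by Remark~\ref{rem:derivative_regularity}. Likewise, $f''=-\pi^2\sqrt2\sum k^2\widetilde f(k)\cos(\pi kx)$ in $L_2$ by the same Cauchy argument, which gives $\int_0^1 (f'')^2=\pi^4\sum k^4|\widetilde f(k)|^2$. By \cite[Lemma~1]{GSY19} and Remark~\ref{rem:derivative_regularity}, $f$ lies in the boundary-constrained space $H_{2,\gamma,1}^{\sob(\mathrm{odd\text{-}bdry0})}$. The theorem therefore reduces to three mode-wise limits plus a justification for exchanging limit and sum:
\[
\lim_{\alpha\to1^-}\lambda_\alpha(k)=\pi^2k^2,\qquad \lim_{\alpha\to1^+}\lambda_\alpha(k)=0,\qquad \lim_{\alpha\to2^-}\lambda_\alpha(k)=\pi^4k^4.
\]

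For the mode-wise limits I use the closed forms from the lemmas. For $\alpha<1$, $\lambda_\alpha(k)=2(1-\alpha)\pi^3\int_0^{1/2}\sin^2(\pi kv)|\sin\pi v|^{-2\alpha-1}\rd v$. Split the integral at a small $\delta$. On $[\delta,1/2]$ the integral is bounded uniformly in $\alpha$, so that part times $(1-\alpha)$ tends to $0$. On $[0,\delta]$ write $\sin^2(\pi kv)=\pi^2k^2v^2(1+O(v^2))$ and $\sin\pi v=\pi v(1+O(v^2))$, so the integrand is $\pi^{1-2\alpha}k^2v^{1-2\alpha}(1+O(v^2))$. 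Then
\[
(1-\alpha)\int_0^\delta v^{1-2\alpha}\rd v=\frac{\delta^{2-2\alpha}}{2}\to\frac12,
\]
while the $O(v^2)$ correction contributes $O((1-\alpha))$. Hence the limit is $2\pi^3\cdot\pi^{-1}k^2\cdot\frac12=\pi^2k^2$; the factor $\pi^3/8$ was evidently chosen to produce exactly this.

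For $\alpha>1$, $\lambda_\alpha(k)=2(\alpha-1)(2-\alpha)\pi^5k^2\int_0^{1/2}\sin^2(\pi kv)|\sin\pi v|^{1-2\alpha}\rd v$. As $\alpha\to1^+$ the integral stays bounded, since the integrand is $\le\sin^2(\pi kv)|\sin \pi v|^{-1-\epsilon}$ and so is integrable near $0$. The factor $(\alpha-1)$ then kills it, giving $\lim\lambda_\alpha(k)=0$. As $\alpha\to2^-$ the same splitting gives the leading term
\[
\pi^{3-2\alpha}k^2\int_0^\delta v^{3-2\alpha}\rd v=\pi^{3-2\alpha}k^2\,\frac{\delta^{4-2\alpha}}{4-2\alpha}.
\]
Multiplying by $(2-\alpha)$ this tends to $\pi^{-1}k^2/2$, so $\lambda_\alpha(k)\to2\pi^5k^2\cdot\pi^{-1}k^2/2=\pi^4k^4$, as required.

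The main obstacle is exchanging limit and infinite sum, which needs a domination $\lambda_\alpha(k)\le C k^{4}$ (or $\le Ck^2$ near $\alpha=1$) with $C$ uniform in $\alpha$ near the endpoint. Dominated convergence then applies, because $\sum k^4|\widetilde f(k)|^2<\infty$ for $f\in H_{2,\gamma,1}^{\cos}$.

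For the domination I revisit the bounds in Lemmas~\ref{lem:bilinear1}--\ref{lem:bilinear2} but keep the factor $(1-\alpha)$ or $(2-\alpha)$ paired with the integral, rather than crude constants like $\int_0^\infty$ whose blow-up may not cancel cleanly. Using $|\sin\pi v|\ge2v$ on $[0,1/2]$ and $\sin^2(\pi kv)\le\min\{\pi^2k^2v^2,1\}$, for $\alpha<1$ I get
\[
(1-\alpha)\int_0^{1/2}\min\{k^2v^2,1\}v^{-2\alpha-1}\rd v\lesssim (1-\alpha)\Bigl[k^2\frac{k^{2\alpha-2}}{2-2\alpha}+\frac{k^{2\alpha}}{2\alpha}\Bigr]\lesssim k^{2\alpha}\le k^2,
\]
uniformly for $\alpha\in(3/4,1)$. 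For $\alpha\to1^+$ the analogous computation gives $(\alpha-1)k^2\bigl[k^{2\alpha-2}/(4-2\alpha)+k^{2\alpha-2}/(2\alpha-2)\bigr]\lesssim k^{2\alpha}$. This is uniform only as $k^{2\alpha}\le k^{4}$, which suffices since $f\in H_{2,\gamma,1}^{\cos}$, and the pointwise limit $0$ then yields the $\alpha\to1^+$ statement. The $\alpha\to2^-$ case is handled the same way, with the $(2-\alpha)$ factor absorbing the $1/(4-2\alpha)$ blow-up.
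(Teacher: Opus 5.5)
Your proposal is correct and follows essentially the same route as the paper. The steps match: the diagonal spectral formulas are extended from cosine partial sums, the mode-wise limits come from splitting the singular integral at a small $\delta$ and comparing with the leading power $v^{1-2\alpha}$ or $v^{3-2\alpha}$, and dominated convergence over $k$ uses the majorant $Mk^4|\widetilde f(k)|^2$.

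The differences are cosmetic. The paper treats $k=1$ separately through a Gamma-function closed form. It also gets the uniform majorant by showing that the constants $C_\alpha$ and $C'_\alpha$ of Lemmas~\ref{lem:bilinear1} and~\ref{lem:bilinear2} stay bounded near the endpoints. So the $\int_0^\infty$ bounds you were wary of do cancel cleanly, by the same computation you perform after substituting $t=kv$.
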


\begin{proof}
    By the integer-order characterization recalled in Subsection~\ref{subsec:known_equivalence} (see \cite[Lemma~1]{GSY19}), the spaces $H_{2,\gamma,1}^{\cos}$ and $H_{2,\gamma,1}^{\sob(\mathrm{odd\text{-}bdry0})}$ coincide as sets and have equivalent norms. Hence, the assumption $f\in H_{2,\gamma,1}^{\cos}$ implies that $f'(0)=f'(1)=0$ holds. This justifies the interpretation of the limiting expression in the second item as a modified norm on the boundary-constrained endpoint space.
    
    In what follows, we evaluate the limits by analyzing the behavior of the base integrals in Lemma~\ref{lem:bilinear1} and Lemma~\ref{lem:bilinear2} as $\alpha$ approaches the integer boundaries.

    Before considering the individual endpoint limits, we establish a
    uniform summable majorant that will justify all termwise limits
    below. Recall from Lemma~\ref{lem:bilinear1} that, for any $1/2<\alpha<1$ and $k\in \NN$, 
    \[
        \Bcal_\alpha(\phi_k,\phi_k)
        \leq
        C_\alpha k^{2\alpha}\quad \text{with}\quad C_\alpha
        =
        \frac{(1-\alpha)\pi^3}{2^{2\alpha}}
        \int_0^\infty
        \frac{\sin^2(\pi t)}{t^{2\alpha+1}}\rd t.
    \]
    For $\alpha\in[3/4,1)$, using
    $\sin^2(\pi t)\leq \pi^2t^2$ for $0<t\leq1$ and
    $\sin^2(\pi t)\leq1$ for $t\geq1$, we obtain
    \begin{align*}
        C_\alpha\leq
        \frac{(1-\alpha)\pi^3}{2^{2\alpha}}
        \left(
            \pi^2\int_0^1 t^{1-2\alpha}\,\rd t
            +
            \int_1^\infty t^{-2\alpha-1}\,\rd t
        \right) =
        \frac{\pi^3}{2^{2\alpha}}
        \left(
            \frac{\pi^2}{2}
            +
            \frac{1-\alpha}{2\alpha}
        \right).
    \end{align*}
    Hence, $C_\alpha$ is uniformly bounded for
    $\alpha\in[3/4,1)$. Similarly, for $1<\alpha<2$, Lemma~\ref{lem:bilinear2} gives
    \[
        \Bcal_\alpha(\phi_k',\phi_k')
        \leq
        C'_\alpha k^{2\alpha} \quad \text{with}\quad C'_\alpha
        =
        \frac{(\alpha-1)(2-\alpha)\pi^5}{2^{2\alpha-2}}
        \int_0^\infty
        \frac{\sin^2(\pi t)}{t^{2\alpha-1}}\rd t.
    \]
    The same elementary estimates yield
    \begin{align*}
        C'_\alpha \leq
        \frac{(\alpha-1)(2-\alpha)\pi^5}{2^{2\alpha-2}}
        \left(
            \pi^2\int_0^1 t^{3-2\alpha}\,\rd t
            +
            \int_1^\infty t^{1-2\alpha}\,\rd t
        \right) =
        \frac{\pi^5}{2^{2\alpha-2}}
        \left(
            \frac{\pi^2(\alpha-1)}{2}
            +
            \frac{2-\alpha}{2}
        \right).
    \end{align*}
    Thus, $C'_\alpha$ is uniformly bounded for
    $\alpha\in(1,2)$.

    Consequently, there exists a constant $M>0$, independent of $k$
    and $\alpha$ in the respective endpoint neighborhoods, such that
    \[
        \Bcal_\alpha(\phi_k,\phi_k)
        \leq Mk^4 \quad \text{for $\alpha\in[3/4,1)$,}\quad \text{and}\quad \Bcal_\alpha(\phi_k',\phi_k')
        \leq Mk^4 \quad \text{for $\alpha\in(1,2)$}.
    \]
    Since $f\in H_{2,\gamma,1}^{\cos}$, we have
    \[
        \sum_{k=1}^\infty
        |\widetilde f(k)|^2k^4
        <\infty.
    \]
    Therefore, the sequence
    \[
        \left(
            M|\widetilde f(k)|^2k^4
        \right)_{k\in\NN}
    \]
    provides a common summable majorant for all the series considered
    below. Hence, the corresponding termwise endpoint limits are
    justified by the dominated convergence theorem applied to the
    counting measure on $\NN$.
    
    To prove the first item from the lower smoothness regime, we apply the finite-sum identity established in the proof of Theorem~\ref{thm:equivalence1} to the cosine partial sums of $f$ and pass to the limit, obtaining
    \[
        (\| f \|_{\alpha,\gamma,1}^{\sob})^2 = |\widetilde{f}(0)|^2 + \frac{1}{\gamma^2}\sum_{k=1}^{\infty}|\widetilde{f}(k)|^2\Bcal_{\alpha}(\phi_k, \phi_k).
    \]
    As established in the proof of Lemma~\ref{lem:bilinear1}, the diagonal modes are explicitly given by
    \[
        \Bcal_{\alpha}(\phi_k, \phi_k) = \frac{2(1-\alpha)\pi^3}{k}\int_{0}^{k/2}  \frac{\sin^2(\pi t)}{|\sin(\pi t/k)|^{2\alpha+1}} \rd t.
    \]
    For the case $k=1$, it holds that
    \begin{align*}
        \Bcal_{\alpha}(\phi_1, \phi_1) & = 2(1-\alpha)\pi^2\int_{0}^{\pi/2}  \sin^{1-2\alpha}(t) \rd t\\
        & = 2(1-\alpha)\pi^2\cdot \frac{\sqrt{\pi}}{2}\frac{\Gamma(1-\alpha)}{\Gamma((3-2\alpha)/2)} = \frac{\pi^{5/2}\Gamma(2-\alpha)}{\Gamma((3-2\alpha)/2)},
    \end{align*}
    where $\Gamma$ denotes the Gamma function. Since $\Gamma(\cdot)$ is continuous on $(0,\infty)$, taking the limit $\alpha \to 1^-$ yields $\Gamma(2-\alpha) \to \Gamma(1) = 1$ and $\Gamma((3-2\alpha)/2) \to \Gamma(1/2) = \sqrt{\pi}$, which directly implies $\lim_{\alpha \to 1^-} \Bcal_{\alpha}(\phi_1, \phi_1) = \pi^2$. For the case $k>1$, we fix a small parameter $\delta \in (0,k/2)$ and split the integration domain into two parts:
    \begin{align*}
        \Bcal_{\alpha}(\phi_k, \phi_k) = \frac{2(1-\alpha)\pi^3}{k}\left(\int_{0}^{\delta}  \frac{\sin^2(\pi t)}{|\sin(\pi t/k)|^{2\alpha+1}} \rd t + \int_{\delta}^{k/2}  \frac{\sin^2(\pi t)}{|\sin(\pi t/k)|^{2\alpha+1}} \rd t\right).
    \end{align*}
    The second integral over the non-singular interval $[\delta, k/2]$ is easily bounded by
    \begin{align*}
        \int_{\delta}^{k/2}  \frac{\sin^2(\pi t)}{|\sin(\pi t/k)|^{2\alpha+1}} \rd t & \le \int_{\delta}^{k/2}  \frac{1}{|\sin(\pi \delta/k)|^{2\alpha+1}} \rd t\\
        & \le \frac{k}{2|\sin(\pi \delta/k)|^{2\alpha+1}} \to \frac{k}{2|\sin(\pi \delta/k)|^{3}}\quad \text{(as $\alpha\to 1^{-}$).}
    \end{align*}
    Since this integral remains bounded, the factor $(1-\alpha)$ ensures that
    \[ \lim_{\alpha\to 1^{-}}\frac{2(1-\alpha)\pi^3}{k} \int_{\delta}^{k/2}  \frac{\sin^2(\pi t)}{|\sin(\pi t/k)|^{2\alpha+1}} \rd t=0.\]
    For fixed $k$ and $\delta$, there exist $\alpha_0\in(1/2,1)$ and a constant $C>0$, independent of $\alpha\in[\alpha_0,1)$ and $t\in(0,\delta)$, such that 
    \[ \left| \frac{\sin^2(\pi t)}{|\sin(\pi t/k)|^{2\alpha+1}} - \pi^{1-2\alpha}k^{2\alpha+1}t^{1-2\alpha} \right| \le C t^{3-2\alpha}. \]
    With this uniform bound, we see that
    \begin{align*}
        & \lim_{\alpha\to 1^{-}}\frac{2(1-\alpha)\pi^3}{k}\left|\int_{0}^{\delta}  \frac{\sin^2(\pi t)}{|\sin(\pi t/k)|^{2\alpha+1}} \rd t-\int_{0}^{\delta}  \pi^{1-2\alpha}k^{2\alpha+1}t^{1-2\alpha} \rd t\right| \\
        & \le \lim_{\alpha\to 1^{-}}\frac{2(1-\alpha)\pi^3}{k}\int_{0}^{\delta}  \left|\frac{\sin^2(\pi t)}{|\sin(\pi t/k)|^{2\alpha+1}}- \pi^{1-2\alpha}k^{2\alpha+1}t^{1-2\alpha}\right| \rd t\\
        & \le \lim_{\alpha\to 1^{-}}\frac{2(1-\alpha)\pi^3}{k}C\int_{0}^{\delta} t^{3-2\alpha} \rd t=\lim_{\alpha\to 1^{-}}\frac{2(1-\alpha)\pi^3}{k}\cdot\frac{C\delta^{4-2\alpha}}{4-2\alpha}=0.
    \end{align*}
    This leads to
    \begin{align*}
        \lim_{\alpha\to 1^{-}}\Bcal_{\alpha}(\phi_k, \phi_k) & = \lim_{\alpha\to 1^{-}}\frac{2(1-\alpha)\pi^3}{k}\int_{0}^{\delta}  \pi^{1-2\alpha}k^{2\alpha+1}t^{1-2\alpha} \rd t\\
        & = \lim_{\alpha\to 1^{-}}\pi^{4-2\alpha}2(1-\alpha)k^{2\alpha}\int_{0}^{\delta}  t^{1-2\alpha} \rd t\\
        & = \lim_{\alpha\to 1^{-}}\pi^{4-2\alpha}k^{2\alpha}\delta^{2-2\alpha} = \pi^2 k^2.
    \end{align*}
    By the uniform domination established above, we may pass to the limit term by term in the series, and hence
    \begin{align*}
        \lim_{\alpha\to 1^{-}}(\| f \|_{\alpha,\gamma,1}^{\sob})^2 & = |\widetilde{f}(0)|^2 + \frac{\pi^2}{\gamma^2}\sum_{k=1}^{\infty}|\widetilde{f}(k)|^2 k^2\\
        & = \left( \int_0^1 f(x)\rd x\right)^2+\frac{1}{\gamma^2}\int_0^1 (f'(x))^2 \rd x = (\| f \|_{1,\gamma,1}^{\sob})^2,
    \end{align*}
    which completes the proof of the first item from the lower regime.

    Next, we move on to proving the first item from the higher smoothness regime. By applying the finite-sum identity established in the proof of Theorem~\ref{thm:equivalence2} to the cosine partial sums of $f$ and passing to the limit, we obtain, for every $1<\alpha<2$,
    \[
    (\|f\|_{\alpha,\gamma,1}^{\sob})^2 = |\widetilde f(0)|^2 + \frac{1}{\gamma^2} \sum_{k=1}^{\infty} |\widetilde f(k)|^2 \left( \pi^2k^2+ \Bcal_\alpha(\phi_k',\phi_k')\right).
    \]
    For $1 < \alpha < 2$, as detailed in Lemma~\ref{lem:bilinear2}, the bilinear form on the derivatives yields
    \begin{align*}
        \Bcal_{\alpha}(\phi'_k, \phi'_k) = 2(\alpha-1)(2-\alpha)\pi^5 k \int_{0}^{k/2} \frac{\sin^2(\pi t)}{|\sin(\pi t/k)|^{2\alpha-1}} \rd t.
    \end{align*}
    For the case $k = 1$, we have
    \begin{align*}
    \Bcal_{\alpha}(\phi_1',\phi_1') & = 2(\alpha-1)(2-\alpha)\pi^4\int_0^{\pi/2}\sin ^{3-2\alpha} (t) \rd t\\
    &= 2(\alpha-1)(2-\alpha)\pi^4\frac{\sqrt{\pi}}{2} \frac{\Gamma(2-\alpha)}{\Gamma((5-2\alpha)/2)} \\
    & = \pi^{9/2}(\alpha-1)\frac{\Gamma(3-\alpha)}{\Gamma((5-2\alpha)/2)} .
    \end{align*}
    As $\alpha \to 1^+$, noting that $\Gamma(3-\alpha)\to \Gamma(2)=1$ and $\Gamma((5-2\alpha)/2)\to \Gamma(3/2)=\sqrt{\pi}/2$, the factor $(\alpha-1)$ ensures that $\Bcal_{\alpha}(\phi_1',\phi_1')\to 0$.
    For the case $k > 1$, the integral can be bounded from above as
    \begin{align*}
    0 < \int_{0}^{k/2} \frac{\sin^2(\pi t)}{|\sin(\pi t/k)|^{2\alpha-1}} \rd t \le \int_{0}^{k/2} \frac{(\pi t)^2}{(2t/k)^{2\alpha-1}} \rd t = \frac{\pi^2 k^3}{2^4 (2-\alpha)},
    \end{align*}
    where the upper bound is due to the inequalities $\sin(\pi t) \le \pi t$ and $\sin(\pi t / k) \ge 2t/k$ for any $t\in [0,k/2]$. Thus, we readily obtain
    \begin{align*}
        0 \le \limsup_{\alpha \to 1^+}\Bcal_{\alpha}(\phi'_k, \phi'_k) \le \lim_{\alpha \to 1^+}\frac{(\alpha-1)\pi^7 k^4}{2^3} = 0.
    \end{align*}
    Therefore, by the uniform domination established above, we may pass to the limit term by term in the series, which gives
    \begin{align*}
        \lim_{\alpha\to 1^{+}}(\| f \|_{\alpha,\gamma,1}^{\mathrm{sob}})^2 & = \lim_{\alpha\to 1^{+}}\left[\left(\int_0^1 f(x) \rd x\right)^2 + \frac{1}{\gamma^2} \left( \int_0^1 (f'(x))^2 \rd x + \Bcal_{\alpha}(f', f')\right)\right]\\
        & = \lim_{\alpha\to 1^{+}}\left[\left(\int_0^1 f(x) \rd x\right)^2 + \frac{1}{\gamma^2} \left( \int_0^1 (f'(x))^2 \rd x + \sum_{k=1}^{\infty}|\widetilde{f}(k)|^2 \Bcal_{\alpha}(\phi'_k, \phi'_k)\right)\right]\\
        & = \left(\int_0^1 f(x) \rd x\right)^2 + \frac{1}{\gamma^2} \int_0^1 (f'(x))^2 \rd x = (\| f \|_{1,\gamma,1}^{\mathrm{sob}})^2,
    \end{align*}
    which completes the proof of the first item from the higher regime.

    Finally, we evaluate the limiting behavior as $\alpha \to 2^-$ to prove the second item. Recall that we have
    \begin{align*}
        \Bcal_{\alpha}(\phi'_k, \phi'_k) & = 2(\alpha-1)(2-\alpha)\pi^5 k \int_{0}^{k/2} \frac{\sin^2(\pi t)}{|\sin(\pi t/k)|^{2\alpha-1}} \rd t \\
        & = 2(\alpha-1)(2-\alpha)\pi^5 k \left(\int_{0}^{\delta} \frac{\sin^2(\pi t)}{|\sin(\pi t/k)|^{2\alpha-1}} \rd t+\int_{\delta}^{k/2} \frac{\sin^2(\pi t)}{|\sin(\pi t/k)|^{2\alpha-1}} \rd t\right),
    \end{align*}
    where the last equality is obtained by fixing a small parameter $\delta \in (0,k/2)$ and splitting the integration domain into two parts. The second integral over the non-singular interval $[\delta, k/2]$ is easily bounded by
    \begin{align*}
        \int_{\delta}^{k/2} \frac{\sin^2(\pi t)}{|\sin(\pi t/k)|^{2\alpha-1}} \rd t & \le \int_{\delta}^{k/2} \frac{1}{|\sin(\pi \delta/k)|^{2\alpha-1}} \rd t \\
        & \le \frac{k}{2|\sin(\pi \delta/k)|^{2\alpha-1}} \to \frac{k}{2|\sin(\pi \delta/k)|^{3}} \quad \text{(as $\alpha\to 2^{-}$).}
    \end{align*}
    Since this integral remains bounded, the factor $(2-\alpha)$ ensures that
    \[ \lim_{\alpha\to 2^{-}}2(\alpha-1)(2-\alpha)\pi^5 k \int_{\delta}^{k/2}  \frac{\sin^2(\pi t)}{|\sin(\pi t/k)|^{2\alpha-1}} \rd t=0.\]
    For fixed $k$ and $\delta$, there exist $\alpha_0\in(1,2)$ and a constant $C>0$, independent of $\alpha\in[\alpha_0,2)$ and $t\in(0,\delta)$, such that 
    \[ \left| \frac{\sin^2(\pi t)}{|\sin(\pi t/k)|^{2\alpha-1}} - \pi^{3-2\alpha} k^{2\alpha-1} t^{3-2\alpha} \right| \le C t^{5-2\alpha}. \]
    As in the proof of the first item from the lower regime, we obtain
    \begin{align*}
        \lim_{\alpha\to 2^-} \Bcal_{\alpha}(\phi'_k, \phi'_k) & = \lim_{\alpha\to 2^-} 2(\alpha-1)(2-\alpha)\pi^5 k \int_0^{\delta} \pi^{3-2\alpha} k^{2\alpha-1} t^{3-2\alpha} \rd t \\
        & = \lim_{\alpha\to 2^-} 2(\alpha-1)\pi^{8-2\alpha} k^{2\alpha} \frac{2-\alpha}{4-2\alpha} \delta^{4-2\alpha} = \pi^4 k^4.
    \end{align*}
    By the same uniform domination, we may again pass to the limit term by term in the series. Therefore, the squared fractional Sobolev-type norm converges to
    \begin{align*}
        \lim_{\alpha\to 2^{-}}(\| f \|_{\alpha,\gamma,1}^{\mathrm{sob}})^2 = \left(\int_0^1 f(x) \rd x\right)^2 + \frac{1}{\gamma^2}\left( \int_0^1 (f'(x))^2 \rd x + \pi^4 \sum_{k=1}^{\infty}|\widetilde{f}(k)|^2 k^4 \right).
    \end{align*}
    Given the half-period cosine expansion of $f \in H_{2,\gamma,1}^{\mathrm{sob}(\mathrm{odd-bdry0})}$, its second-order weak derivative is expanded as $f''(x) = -\pi^2 \sum_{k=1}^{\infty} k^2 \widetilde{f}(k) \phi_k(x)$. Indeed, if $f_N(x)=\sum_{k=0}^{N}\widetilde{f}(k)\phi_k(x)$, then 
    \[ \|f''_N-f''_M\|_{L_2([0,1])}^2 = \pi^4 \sum_{k=M+1}^{N}k^4 |\widetilde{f}(k)|^2 \longrightarrow 0.\]
    Since $f_N\to f$ in $L_2([0,1])$, the $L_2$-limit of $f''_N$ is the second weak derivative of $f$. Hence, $f''(x) = -\pi^2 \sum_{k=1}^{\infty} k^2 \widetilde{f}(k) \phi_k(x)$ in the $L_2$-sense. Since the basis functions $\phi_k(x) = \sqrt{2}\cos(\pi k x)$ are orthonormal, Parseval's identity guarantees that the $L_2$-norm satisfies
    \[ \int_0^1 (f''(x))^2 \rd x = \pi^4 \sum_{k=1}^{\infty} k^4 |\widetilde{f}(k)|^2. \]
    Substituting this relationship back into the norm expression leads to
    \[ \lim_{\alpha\to 2^{-}}(\| f \|_{\alpha,\gamma,1}^{\mathrm{sob}})^2 = \left(\int_0^1 f(x) \rd x\right)^2 + \frac{1}{\gamma^2}\left( \int_0^1 (f'(x))^2 \rd x + \int_0^1 (f''(x))^2 \rd x \right), \]
    which completes the proof of the second item.
\end{proof}

\begin{remark}\label{rem:absolute_convergence}
    Let $1/2<\alpha<2$, $d\in\NN$, and let $\bsgamma=(\gamma_1,\ldots,\gamma_d)$ be a collection of positive weights. We see that every function $f\in H_{\alpha,\bsgamma,d}^{\sob}$ has an absolutely and uniformly convergent half-period cosine expansion. Indeed, by the norm equivalence established above, we may regard
    $f$ as an element of $H_{\alpha,\bsgamma,d}^{\cos}$. Since $\|\phi_{\bsk}\|_{L_\infty([0,1]^d)}=2^{|\bsk|_0/2}$, the Cauchy--Schwarz inequality yields
    \begin{align*}
        \sum_{\bsk\in\NN_0^d}
        |\widetilde f(\bsk)|
        \|\phi_{\bsk}\|_{L_\infty}
        &\leq
        \left(
            \sum_{\bsk\in\NN_0^d}
            |\widetilde f(\bsk)|^2
            r_{\alpha,\bsgamma}^2(\bsk)
        \right)^{1/2}
        \left(
            \sum_{\bsk\in\NN_0^d}
            \frac{2^{|\bsk|_0}}
                 {r_{\alpha,\bsgamma}^2(\bsk)}
        \right)^{1/2} \\
        &=
        \|f\|_{\alpha,\bsgamma,d}^{\cos}
        \prod_{j=1}^d
        \left(
            1+2\gamma_j^2\zeta(2\alpha)
        \right)^{1/2}
        <\infty,
    \end{align*}
    where the finiteness follows from $\alpha>1/2$. Hence, the
    half-period cosine expansion of $f$ converges absolutely and
    uniformly on $[0,1]^d$.
\end{remark}

In summary, we have established a family of weighted fractional and integer-order Sobolev spaces that continuously cover the range $1/2 < \alpha \le 2$, with the corresponding endpoint connections at
$\alpha=1$ and $\alpha=2$. Due to the norm equivalence to the half-period cosine space, results for multivariate problems in the half-period cosine setting can be effortlessly transferred to our novel Sobolev spaces. In what follows, we exploit this equivalence to study numerical integration and function approximation in Sections~\ref{sec:integration} and \ref{sec:approximation}, respectively.

%%%%%%%%%%%%%%%%%%%%%%%%%%%%%%%%%%%%%%%%%%%%%%%%%%
%%%%%%%%%%%%%%%%%%%%%%%%%%%%%%%%%%%%%%%%%%%%%%%%%%
\section{Numerical integration}\label{sec:integration}

In this section, we study multivariate numerical integration over $[0,1]^d$. We consider the problem of approximating the integral
\[ I(f) \coloneqq \int_{[0,1]^d}f(\bsx)\rd \bsx \]
 of a function $f\in H_{\alpha,\bsgamma,d}^{\sob}$. For a normed space $H$ of functions on $[0,1]^d$ and a deterministic integration algorithm $A\colon H\to\RR$, we define its worst-case error by
\[
    e^{\wor}(H;A) \coloneqq \sup_{\substack{f\in H\\ \|f\|_H\leq 1}} \left|I(f)-A(f)\right|.
\]

Due to Theorem~\ref{thm:equivalence1} and Remark~\ref{rem:tractability_transfer1}, the integer-order equivalence
recalled in Subsection~\ref{subsec:known_equivalence}, and Theorem~\ref{thm:equivalence2} and Remark~\ref{rem:tractability_transfer2}, existing error bounds, and consequently the associated tractability results, for weighted half-period cosine spaces can be transferred to our fractional Sobolev spaces through an appropriate coordinate-wise rescaling of the weights.

More precisely, let 
\[ \underline{\kappa}_{\alpha}\coloneqq \begin{cases}
c_{\alpha}, & \text{$1/2<\alpha<1$,}\\
\pi^2, & \text{$\alpha=1$,}\\
c'_{\alpha}, & \text{$1<\alpha<2$,}\\
\end{cases}\quad \text{and}\quad \overline{\kappa}_{\alpha}\coloneqq \begin{cases}
C_{\alpha}, & \text{$1/2<\alpha<1$,}\\
\pi^2, & \text{$\alpha=1$,}\\
C''_{\alpha}, & \text{$1<\alpha<2$.}\\
\end{cases} \]
where $c_\alpha, C_\alpha$ (for $1/2 < \alpha < 1$) and $c'_\alpha, C''_\alpha$ (for $1 < \alpha < 2$) are the norm-equivalence constants derived in Section~\ref{sec:equivalence}. The corresponding
unit balls satisfy
\[
    B\left( H_{\alpha,\overline{\kappa}_{\alpha}^{-1/2}\bsgamma,d}^{\cos} \right) \subseteq B\left(H_{\alpha,\bsgamma,d}^{\sob}\right) \subseteq B\left(H_{\alpha,\underline{\kappa}_{\alpha}^{-1/2}\bsgamma,d}^{\cos}\right),
\] 
where $B(H)\coloneqq\{f\in H\mid \|f\|_H\leq1\}$ denotes the closed
unit ball of $H$.
Consequently, for any deterministic algorithm $A$, the corresponding worst-case errors satisfy
\[ e^{\mathrm{wor}}\left(H_{\alpha,\overline{\kappa}_{\alpha}^{-1/2}\bsgamma,d}^{\cos}; A\right)\le e^{\mathrm{wor}}\left(H_{\alpha,\bsgamma,d}^{\mathrm{sob}}; A\right) \le e^{\mathrm{wor}}\left(H_{\alpha,\underline{\kappa}_{\alpha}^{-1/2}\bsgamma,d}^{\cos}; A\right). \]
Thus, an upper bound for $e^{\mathrm{wor}}(H_{\alpha,\bsgamma,d}^{\mathrm{sob}}; A)$ is obtained by applying the corresponding cosine-space result with the rescaled weights $\Gamma_{j,\alpha}=\underline{\kappa}_{\alpha}^{-1/2}\gamma_j$ for all $j=1,\ldots,d$. An analogous sandwich bound for randomized errors will be used in Subsection~\ref{subsec:randomized_integration}, after the randomized error criterion has been introduced.

In what follows, we leverage this automated transfer mechanism to establish error guarantees for both deterministic and randomized integration algorithms. In particular, we focus on tent-transformed rank-1 lattice rules from \cite{DNP14} in the deterministic setting, and their randomized variants studied in \cite{DGS22,GK26}.

%%%%%%%%%%%%%%%%%%%%%%%%%%%%%%%%%%%%%%%%%%%%%%%%%%
\subsection{Deterministic tent-transformed lattice rules}
As studied in \cite{DNP14}, we approximate $I(f)$ by an equally weighted quadrature rule
\[ I(f;P^{\varphi}_{n,\bsz})=\frac{1}{n}\sum_{i=0}^{n-1}f(\bsx_i), \]
where $P^{\varphi}_{n,\bsz}=\{\bsx_0,\ldots,\bsx_{n-1}\}\subset [0,1]^d$ denotes the tent-transformed rank-1 lattice point set. Note that the use of the tent transformation in the context of lattice rules was originally investigated by Hickernell in \cite{H02}.

Let the smoothness parameter $\alpha>1/2$ and the weight parameters $\bsgamma$ be given. In \cite[Corollary~1]{DNP14}, Dick et al.\ proved that the worst-case error of the tent-transformed lattice rule, with $n$ being a prime number and the generating vector $\bsz \in \{1,\ldots,n-1\}^d$ constructed via the fast component-by-component (CBC) algorithm, is bounded by
\begin{align*}
    e^{\mathrm{wor}}(H_{\alpha,\bsgamma,d}^{\cos}; P^{\varphi}_{n,\bsz}) & \coloneqq \sup_{\substack{f\in H_{\alpha,\bsgamma,d}^{\cos}\\ \|f\|_{\alpha,\bsgamma,d}^{\cos}\le 1}}\left| I(f)-I(f;P^{\varphi}_{n,\bsz})\right| \\
    & \: \le \left[ \frac{1}{n-1}\left(-1+\prod_{j=1}^{d}(1+2\gamma_j^{1/\lambda}\zeta(\alpha / \lambda))\right)\right]^{\lambda},
\end{align*}
for any $1/2\le \lambda<\alpha$, where we have applied a slight modification to adjust the notations. Here, the essential result is that the worst-case error of the tent-transformed lattice rule in $H_{\alpha,\bsgamma,d}^{\cos}$ is bounded above by that of the (non-tent-transformed) lattice rule in the weighted Korobov space with the same smoothness parameter $\alpha$ and weights $\bsgamma$, see \cite[Theorem~7.40]{DKP22}. 

Thus, the fast CBC algorithm employed here is the same as the one in \cite[Section~3.4]{DKP22} with rescaled weights, and is described in Algorithm~\ref{alg:cbc_lattice}. Note that, as proposed in \cite{NC06}, by exploiting the circulant matrix structure of the kernel evaluations, this greedy algorithm can be performed via the fast Fourier transform in $\mathcal{O}(d n \log n)$ operations.

\begin{algorithm}[H]
\caption{CBC construction for deterministic rank-1 lattice rules}
\label{alg:cbc_lattice}
\begin{algorithmic}[1]
\REQUIRE A prime number $n$, dimension $d$, smoothness $1/2<\alpha<2$, and rescaled weights $\bsGamma_{\alpha}\coloneqq \underline{\kappa}_{\alpha}^{-1/2}\bsgamma$.
\STATE Set $z_1 = 1$.
\FOR{$s = 2$ \TO $d$}
    \STATE Find $z_s \in \{1, \ldots, n-1\}$ that minimizes the worst-case criterion:
    \[
        E^2_{n,s}(z_s \mid (z_1, \ldots, z_{s-1})) \coloneqq -1 + \frac{1}{n} \sum_{i=0}^{n-1} \prod_{j=1}^s \left( 1 + \Gamma_{j,\alpha}^2 \omega_{2\alpha}\left( \left\{ \frac{i z_j}{n} \right\} \right) \right),
    \]
    where $\{ \cdot \}$ denotes the fractional part, and $\omega_{2\alpha}(x) \coloneqq \sum_{k \in \ZZ \setminus \{0\}} |k|^{-2\alpha} e^{2\pi \mi k x}$.
\ENDFOR
\RETURN $\bsz = (z_1, \ldots, z_d)$
\end{algorithmic}
\end{algorithm}

Due to the norm equivalence (Remarks~\ref{rem:tractability_transfer1} and \ref{rem:tractability_transfer2}), this deterministic error guarantee immediately yields convergence rates arbitrarily close to the optimal rate in our fractional Sobolev spaces.

\begin{theorem}\label{thm:integration_deterministic}
    Let $1/2<\alpha<2$. For a prime $n$ and the generating vector $\bsz$ constructed by the fast CBC algorithm as above, the worst-case error in the fractional Sobolev space $H_{\alpha,\bsgamma,d}^{\mathrm{sob}}$ satisfies
    \begin{align*}
        e^{\mathrm{wor}}(H_{\alpha,\bsgamma,d}^{\mathrm{sob}}; P^{\varphi}_{n,\bsz}) \le \left[ \frac{V_{d}(\alpha,\lambda,\bsgamma)-1}{n-1}\right]^{\lambda} \quad \text{with}\quad V_{d}(\alpha,\lambda,\bsgamma) \coloneqq \prod_{j=1}^{d}\left(1+2\Gamma_{j,\alpha}^{1/\lambda}\zeta(\alpha / \lambda)\right),
    \end{align*}
    for any $1/2\le \lambda<\alpha$, where the rescaled weights are given as $\Gamma_{j,\alpha}=\underline{\kappa}_{\alpha}^{-1/2}\gamma_j$.
\end{theorem}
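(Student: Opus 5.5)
The proof is a transfer argument through the unit-ball inclusion $B(H_{\alpha,\bsgamma,d}^{\sob})\subseteq B(H_{\alpha,\underline{\kappa}_{\alpha}^{-1/2}\bsgamma,d}^{\cos})$. This inclusion follows from Remarks~\ref{rem:tractability_transfer1} and \ref{rem:tractability_transfer2} for $\alpha\ne1$, and from the integer-order equivalence of Subsection~\ref{subsec:known_equivalence} (with the constant $\pi^2$) for $\alpha=1$.

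First I verify the lower norm bound $\|f\|^{\cos}_{\alpha,\underline{\kappa}_\alpha^{-1/2}\bsgamma,d}\le\|f\|^{\sob}_{\alpha,\bsgamma,d}$. In one dimension it is read off from the mode-wise identity in the proofs of Theorems~\ref{thm:equivalence1} and~\ref{thm:equivalence2}. The squared Sobolev norm equals $|\widetilde f(0)|^2+\gamma^{-2}\sum_{k\ge1}|\widetilde f(k)|^2\beta_k$, where $\beta_k\ge\underline{\kappa}_\alpha k^{2\alpha}$. This equals $\sum_k|\widetilde f(k)|^2 r^2_{\alpha,\underline{\kappa}_\alpha^{-1/2}\gamma}(k)$ up to the inequality. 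For $\alpha=1$ we have $\beta_k=\pi^2k^2$ exactly.

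In $d$ dimensions, the Hilbert tensor product norm is $\sum_{\bsk}|\widetilde f(\bsk)|^2\prod_j\beta^{(j)}_{k_j}$, with $\beta_0^{(j)}=1$ and $\beta^{(j)}_k=\gamma_j^{-2}\beta_k$. This holds because $\{\phi_{\bsk}\}$ is orthogonal for each factor's inner product, so the tensor products are orthogonal with product norms. Multiplying the factorwise lower bounds then gives the multivariate inequality. Hence every $f$ in the unit ball of the Sobolev space lies in the unit ball of the cosine space with rescaled weights $\Gamma_{j,\alpha}=\underline{\kappa}_\alpha^{-1/2}\gamma_j$.

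Next, the integration error $I(f)-I(f;P^\varphi_{n,\bsz})$ is a fixed linear functional, defined pointwise since functions have continuous representatives by Remark~\ref{rem:absolute_convergence}. Its supremum over the smaller set is therefore dominated by its supremum over the larger one:
\[
e^{\wor}(H_{\alpha,\bsgamma,d}^{\sob};P^\varphi_{n,\bsz})\le e^{\wor}(H_{\alpha,\bsGamma_\alpha,d}^{\cos};P^\varphi_{n,\bsz}).
\]
Finally, I invoke \cite[Corollary~1]{DNP14} with the weights $\bsGamma_\alpha$. Algorithm~\ref{alg:cbc_lattice} is precisely the fast CBC search for the weighted Korobov/cosine criterion with these weights. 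Any $1/2\le\lambda<\alpha$ therefore yields the stated bound with $V_d(\alpha,\lambda,\bsgamma)=\prod_j(1+2\Gamma_{j,\alpha}^{1/\lambda}\zeta(\alpha/\lambda))$.

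The only delicate point, and the step I expect to be the main obstacle, is to make sure the CBC output in the cited corollary is the one produced with the rescaled weights and not the original $\bsgamma$. Since the algorithm explicitly takes $\bsGamma_\alpha$ as input, this is consistent. A secondary check is that point evaluation is legitimate on $H^{\sob}$, which Remark~\ref{rem:absolute_convergence} provides.
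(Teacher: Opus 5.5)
Your proposal is correct and follows essentially the same route as the paper. The paper also bounds the worst-case error through the unit-ball inclusion $B(H_{\alpha,\bsgamma,d}^{\sob})\subseteq B(H_{\alpha,\underline{\kappa}_{\alpha}^{-1/2}\bsgamma,d}^{\cos})$ (from Remarks~\ref{rem:tractability_transfer1} and~\ref{rem:tractability_transfer2}, and the integer-order equivalence at $\alpha=1$), then applies \cite[Corollary~1]{DNP14} with the CBC search run on the rescaled weights $\Gamma_{j,\alpha}$; your tensor-product verification of the multivariate lower norm bound just spells out what the paper takes from those remarks.
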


\begin{remark}\label{rem:tractability_deterministic}
    From Theorem~\ref{thm:integration_deterministic}, we can easily derive tractability results for our novel fractional Sobolev spaces. If the weights $\bsgamma = (\gamma_j)_{j \ge 1}$ satisfy the summability condition
    \[ \sum_{j=1}^{\infty} \gamma_j^{1/\lambda} < \infty \]
    for some $1/2 \le \lambda < \alpha$, then the upper bound in Theorem~\ref{thm:integration_deterministic} can be bounded independently of the dimension $d$. This immediately implies that numerical integration in $H_{\alpha,\bsgamma,d}^{\mathrm{sob}}$ achieves \emph{strong polynomial tractability} with a convergence rate of $\mathcal{O}(n^{-\lambda})$. Thus, to achieve convergence rates arbitrarily close to the optimal rate while retaining strong polynomial tractability, a sufficient condition is 
    \[ \sum_{j=1}^{\infty} \gamma_j^{1/\alpha} < \infty. \]    
\end{remark}

%%%%%%%%%%%%%%%%%%%%%%%%%%%%%%%%%%%%%%%%%%%%%%%%%%
\subsection{Randomized tent-transformed lattice rules}\label{subsec:randomized_integration}

In the randomized setting, instead of fixing a point set $P_{n,\bsz}^{\varphi}$ deterministically, we allow for randomness in the choice of the quadrature rule. More generally, a randomized algorithm is defined by a pair consisting of a probability space $(\Omega, \Sigma,\mu)$ and a family of deterministic quadrature rules $Q=(Q^{\omega})_{\omega\in \Omega}$, such that, for any fixed $\omega \in \Omega$, $Q^{\omega}(f)$ refers to a deterministic quadrature of $f$. As an error criterion, we employ the randomized error defined for a normed space $H$ of integrands by
\[ e^{\mathrm{ran}}(H; Q) \coloneqq \sup_{\substack{f\in H\\ \|f\|_{H}\le 1}}\EE_{\omega}\left| I(f)-Q^{\omega}(f)\right|. \]
In what follows, we evaluate this randomized error specifically for our fractional Sobolev space $H = H_{\alpha,\bsgamma,d}^{\mathrm{sob}}$ equipped with the norm $\|\cdot\|_{H} = \|\cdot\|_{\alpha,\bsgamma,d}^{\mathrm{sob}}$.

In this context, building upon a classical work by Bakhvalov \cite{B61}, \cite{KKNU19} proved the non-constructive existence of randomized rank-1 lattice rules that achieve a randomized error of order $\mathcal{O}(n^{-\alpha-1/2+\varepsilon})$ for any $\varepsilon>0$ in the weighted Korobov spaces. Fully constructive randomized algorithms have recently been developed; see, e.g., \cite{DGS22,G26,GK26,KNW23}. In what follows, we focus on two different constructive approaches for the half-period cosine space and transfer their results to our novel fractional Sobolev spaces: the randomized fast CBC algorithm from \cite{DGS22} and the robust median-of-means lattice rules from \cite{GK26}.

The randomized CBC algorithm, proposed in \cite{DGS22} and described in Algorithm~\ref{alg:randomized_cbc_lattice}, chooses the number of points $n$ uniformly at random from the set of prime numbers
\[ \PP_{n_{\max}}\coloneqq \left\{ \lceil n_{\max}/2\rceil  <n\le n_{\max}\mathrel{\Big|} \text{$n$ is prime}\right\}, \]
for a prescribed integer $n_{\max} \ge 2$. Conditioned on the chosen $n$, the components of the generating vector $\bsz$ are selected iteratively in a randomized CBC fashion with a threshold parameter $\tau \in (0,1)$.

\begin{algorithm}[H]
\caption{Randomized CBC construction for randomized rank-1 lattice rules}
\label{alg:randomized_cbc_lattice}
\begin{algorithmic}[1]
\REQUIRE A positive integer $n_{\max}$, dimension $d$, smoothness parameter $1/2<\alpha <2$, rescaled weights $\bsGamma_{\alpha}\coloneqq \underline{\kappa}_{\alpha}^{-1/2}\bsgamma$, and a threshold parameter $\tau\in (0,1)$.
\STATE Randomly pick a prime $n \in \PP_{n_{\max}}$ with uniform distribution.
\STATE Set $z_1 = 1$.
\FOR{$s = 2$ \TO $d$}
    \STATE Find a subset $Z_s \subset \{1, \ldots, n-1\}$ of size $|Z_s|=\lceil \tau (n-1)\rceil$ such that
    \[
        E_{n,s}(z^{(1)}_s \mid (z_1, \ldots, z_{s-1})) \le E_{n,s}(z^{(2)}_s \mid (z_1, \ldots, z_{s-1}))
    \]
    for all $z^{(1)}_s \in Z_s$ and all $z^{(2)}_s \in \{1, \ldots, n-1\} \setminus Z_s$.
    \STATE Randomly pick $z_s \in Z_s$ with uniform distribution.
\ENDFOR
\RETURN $\bsz = (z_1, \ldots, z_d)$ and $n$.
\end{algorithmic}
\end{algorithm}

In line~4 of Algorithm~\ref{alg:randomized_cbc_lattice}, we need to arrange the integers $1,\ldots,n-1$ such that the corresponding value $E_{n,s}$ is listed in ascending order. This arrangement is not necessarily unique when some of the integers yield the same value of $E_{n,s}$. However, we can always make the ordering unique by arranging such integers themselves in ascending order.

Now, our randomized integration rule is given by
\[
    Q_{n_{\max},\tau}(f) \coloneqq I(f;P^{\varphi}_{n,\bsz}),
\]
with $n$ and $\bsz$ being randomly drawn according to Algorithm~\ref{alg:randomized_cbc_lattice}. Due to the norm equivalence, the error guarantee proven in \cite[Theorem~4.3]{DGS22} for the half-period cosine space directly yields the randomized error bound in our novel fractional Sobolev space.

\begin{theorem}\label{thm:integration_randomized_cbc}
    Let $1/2 < \alpha < 2$, $\tau \in (0, 1)$ and $\bsgamma=(\gamma_1,\ldots,\gamma_d)$ with $0<\gamma_j\le \underline{\kappa}_{\alpha}^{1/2}$. Assume 
    \begin{align}\label{eq:minimal_size_n}
        n_{\max}\ge 2\inf_{1/2\le \lambda<\alpha}\frac{V_{d}(\alpha,\lambda,\bsgamma)-1}{1-\tau},
    \end{align}
    where $V_{d}(\alpha,\lambda,\bsgamma)$ is defined as in Theorem~\ref{thm:integration_deterministic}. Let $Q_{n_{\max},\tau}$ be the randomized tent-transformed lattice rule with $n$ and $\bsz$ constructed by Algorithm~\ref{alg:randomized_cbc_lattice}. Then, the randomized error in the fractional Sobolev space $H_{\alpha,\bsgamma,d}^{\mathrm{sob}}$ satisfies
    \[
        e^{\mathrm{ran}}(H_{\alpha,\bsgamma,d}^{\mathrm{sob}}; Q_{n_{\max},\tau}) \le C_{\lambda,\delta,\tau}\frac{\left(V_{d}(\alpha,\lambda,\bsgamma)-1\right)^{\lambda-\delta}}{n_{\max}^{\lambda+1/2-\delta}},
    \]
    for any $1/2< \lambda<\alpha$ and $0<\delta<\min(\lambda-1/2,1)$ with a constant $C_{\lambda,\delta,\tau}>0$. 
\end{theorem}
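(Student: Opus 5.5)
The proof is a transfer argument: we move the known randomized error bound for the half-period cosine space to the Sobolev space through the norm equivalence. First I would establish the randomized analogue of the unit-ball sandwich stated before Theorem~\ref{thm:integration_deterministic}. By Remarks~\ref{rem:tractability_transfer1} and~\ref{rem:tractability_transfer2}, together with the integer case $\alpha=1$ from Subsection~\ref{subsec:known_equivalence}, we have
\[ B\left(H_{\alpha,\bsgamma,d}^{\sob}\right)\subseteq B\left(H_{\alpha,\bsGamma_\alpha,d}^{\cos}\right), \qquad \bsGamma_\alpha=\underline{\kappa}_\alpha^{-1/2}\bsgamma. \]
For every fixed $\omega$, the quantity $|I(f)-Q^\omega(f)|$ does not depend on which norm is placed on $f$. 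Taking expectation and then the supremum over the smaller ball therefore gives
\[ e^{\mathrm{ran}}(H_{\alpha,\bsgamma,d}^{\sob};Q_{n_{\max},\tau})\le e^{\mathrm{ran}}(H_{\alpha,\bsGamma_\alpha,d}^{\cos};Q_{n_{\max},\tau}). \]
A small check is needed here: the randomized rule is well defined on the Sobolev space. This holds because the sets coincide and point evaluations are continuous, by Remark~\ref{rem:absolute_convergence}.

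Second, I would apply \cite[Theorem~4.3]{DGS22} to the cosine space with weights $\bsGamma_\alpha$. The CBC criterion in Algorithm~\ref{alg:randomized_cbc_lattice} is already built with exactly these rescaled weights $\Gamma_{j,\alpha}$. That theorem gives a bound of the form
\[ C_{\lambda,\delta,\tau}\,(V-1)^{\lambda-\delta}\,n_{\max}^{-\lambda-1/2+\delta}, \qquad V=\prod_{j=1}^d\left(1+2\Gamma_{j,\alpha}^{1/\lambda}\zeta(\alpha/\lambda)\right), \]
which is precisely $V_d(\alpha,\lambda,\bsgamma)$ as defined in Theorem~\ref{thm:integration_deterministic}.

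Third, I would check the hypotheses of \cite[Theorem~4.3]{DGS22}. That theorem, as I recall it, requires weights bounded by $1$, which after rescaling reads $\Gamma_{j,\alpha}\le1$, i.e.\ $\gamma_j\le\underline{\kappa}_\alpha^{1/2}$; this is exactly the assumption in the statement. It also requires a lower bound on $n_{\max}$ ensuring that the primes in $\PP_{n_{\max}}$ are large enough for the deterministic CBC bound from the $\tau$-fraction of good candidates to be meaningful. In rescaled form this is condition~\eqref{eq:minimal_size_n}.

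The main obstacle is this bookkeeping. Condition~\eqref{eq:minimal_size_n} and the admissible ranges of $\lambda$ and $\delta$ must be matched exactly with the formulation in \cite{DGS22}, including the restriction $\delta<\min(\lambda-1/2,1)$. That paper's notation, its normalization of the cosine norm and its weight convention ($\gamma_j$ versus $\gamma_j^2$) have to be translated carefully so that $\Gamma_{j,\alpha}^{1/\lambda}$, and not some other power, appears in $V_d$. Any mismatch in these conventions is absorbed into $C_{\lambda,\delta,\tau}$ or into a relabelling of the weights. Once this is done, the claimed bound follows directly.
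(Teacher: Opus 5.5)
Your proposal is correct and matches the paper's argument: the paper also uses the unit-ball inclusion $B(H_{\alpha,\bsgamma,d}^{\sob})\subseteq B(H_{\alpha,\bsGamma_\alpha,d}^{\cos})$ to compare randomized errors, and then invokes \cite[Theorem~4.3]{DGS22} with the rescaled weights $\Gamma_{j,\alpha}=\underline{\kappa}_\alpha^{-1/2}\gamma_j$. You read the hypothesis $\gamma_j\le\underline{\kappa}_\alpha^{1/2}$ as $\Gamma_{j,\alpha}\le 1$, so that $r_{\alpha,\bsGamma_\alpha}(\bsk)\ge 1$, which is exactly the role the paper gives it in Remark~\ref{rem:gamma_bounded}.
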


Regarding the dependence of the error bound on the dimension $d$, the same observation as in Remark~\ref{rem:tractability_deterministic} applies to the current randomized setting. To be more precise, under the summability condition $\sum_{j=1}^{\infty} \gamma_j^{1/\alpha} < \infty$, the error bound can be further bounded independently of $d$, and the randomized tent-transformed lattice rule $Q_{n_{\max},\tau}$ achieves strong polynomial tractability with an optimal randomized convergence rate close to $\Ocal(n_{\max}^{-\alpha - 1/2})$.

Although the randomized CBC algorithm described above achieves the near-optimal randomized convergence rate, it requires the smoothness parameter $\alpha$ and the weights $\bsgamma$ as inputs. To construct a more robust algorithm without needing to specify these parameters beforehand, following \cite{GK26}, we introduce the median-of-means estimator in Algorithm~\ref{alg:median_of_means}. In what follows, let $h \colon \NN \to [1,\infty)$ be an arbitrary function satisfying $h(n) \to \infty$ as $n \to \infty$. In practice, one considers a function that grows extremely slowly, such as $h(n) = \max(1, \log n)$ or $h(n) = \max(1, \log\log n)$; see \cite[Remark~11]{GK26}.

\begin{algorithm}[H]
\caption{Median-of-means tent-transformed rank-1 lattice rules}
\label{alg:median_of_means}
\begin{algorithmic}[1]
\REQUIRE A positive integer $n_{\max}$, dimension $d$, a growth function $h \colon \NN \to [1,\infty)$, and an integrand $f \colon [0,1]^d \to \RR$.
\STATE Set $R = 2\lceil h(n_{\max})\log_2 n_{\max}\rceil + 1$.
\FOR{$r = 1$ \TO $R$}
    \STATE Independently pick a prime $n_r \in \PP_{n_{\max}}$ uniformly at random and a generating vector $\bsz_r \in \{1,\ldots,n_r-1\}^d$ uniformly at random.
    \STATE Compute $Q_{n_r,\bsz_r}(f) \coloneqq I(f;P^{\varphi}_{n_r,\bsz_r})$.
\ENDFOR
\RETURN $Q_{n_{\max},h,\mathrm{med}}(f) \coloneqq \operatorname{median}\left( Q_{n_1,\bsz_1}(f), \ldots, Q_{n_R,\bsz_R}(f) \right)$.
\end{algorithmic}
\end{algorithm}

Notably, Algorithm~\ref{alg:median_of_means} does not require any information on $\alpha$ or $\bsgamma$. Nevertheless, similarly to Theorem~\ref{thm:integration_randomized_cbc}, the following randomized error bound holds, translated from \cite[Theorem~10]{GK26}:
\begin{theorem}\label{thm:integration_randomized_median}
    Let $1/2 < \alpha < 2$ and $\bsgamma=(\gamma_1,\ldots,\gamma_d)$ with $0<\gamma_j\le \underline{\kappa}_{\alpha}^{1/2}$. Assume 
    \eqref{eq:minimal_size_n} with $\tau=31/32$. For a growth function $h \colon \NN \to [1,\infty)$, let $Q_{n_{\max},h,\mathrm{med}}$ be the median-of-means tent-transformed lattice rule given by Algorithm~\ref{alg:median_of_means}. Then, for any $1/2< \lambda<\alpha$, any $0<\delta<\min(\lambda-1/2,1)$, and all $n_{\max}$ with $h(n_{\max})\ge \lambda+1/2$, the randomized error in the fractional Sobolev space $H_{\alpha,\bsgamma,d}^{\mathrm{sob}}$ satisfies
    \[
        e^{\mathrm{ran}}(H_{\alpha,\bsgamma,d}^{\mathrm{sob}}; Q_{n_{\max},h,\mathrm{med}}) \le C_{\lambda,\delta}\frac{\left(V_{d}(\alpha,\lambda,\bsgamma)-1\right)^{\lambda}}{n_{\max}^{\lambda+1/2-\delta}},
    \]
    where $V_{d}(\alpha,\lambda,\bsgamma)$ is defined as in Theorem~\ref{thm:integration_deterministic}, with a constant $C_{\lambda,\delta}>0$. 
\end{theorem}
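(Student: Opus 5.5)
The plan is to transfer the median-of-means bound of \cite[Theorem~10]{GK26} from the half-period cosine space to $H_{\alpha,\bsgamma,d}^{\mathrm{sob}}$ via the unit-ball inclusion established at the beginning of this section. The key point is that $Q_{n_{\max},h,\mathrm{med}}$ does not depend on $\alpha$ or $\bsgamma$. So the same randomized algorithm, with the same probability space, is analyzed in both spaces, and only the supremum over the unit ball changes.

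\emph{Step 1 (sandwich for the randomized error).} From the inclusion
\[
B\bigl(H_{\alpha,\bsgamma,d}^{\sob}\bigr)\subseteq B\bigl(H_{\alpha,\underline{\kappa}_{\alpha}^{-1/2}\bsgamma,d}^{\cos}\bigr),
\]
which follows from Remarks~\ref{rem:tractability_transfer1} and~\ref{rem:tractability_transfer2} together with the integer case $\alpha=1$, I will deduce
\[
e^{\mathrm{ran}}\bigl(H_{\alpha,\bsgamma,d}^{\sob};Q_{n_{\max},h,\mathrm{med}}\bigr)\le e^{\mathrm{ran}}\bigl(H_{\alpha,\bsGamma_\alpha,d}^{\cos};Q_{n_{\max},h,\mathrm{med}}\bigr),
\]
where $\Gamma_{j,\alpha}=\underline{\kappa}_{\alpha}^{-1/2}\gamma_j$. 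This holds because $e^{\mathrm{ran}}$ is a supremum of $\EE_\omega|I(f)-Q^\omega(f)|$ over the unit ball, and the expectation does not involve the norm.

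One point needs care here. Point evaluation must make sense for every $f$ in the ball. This is guaranteed by Remark~\ref{rem:absolute_convergence}, which gives a continuous representative, so $Q^\omega(f)$ is well defined and measurable in $\omega$, as a finite combination of measurable random choices.

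\emph{Step 2 (invoke the cosine result).} I will apply \cite[Theorem~10]{GK26} in $H_{\alpha,\bsGamma_\alpha,d}^{\cos}$ and check its hypotheses under the rescaled weights.
\begin{itemize}
\item The condition $0<\gamma_j\le\underline{\kappa}_\alpha^{1/2}$ is exactly $\Gamma_{j,\alpha}\le 1$. This is the weight normalization assumed in \cite{GK26}, and presumably also in \cite{DGS22}, which keeps the initial error at most one.
\item The size condition \eqref{eq:minimal_size_n} with $\tau=31/32$ is their minimal-$n_{\max}$ requirement, written with $V_d(\alpha,\lambda,\bsgamma)$. By Theorem~\ref{thm:integration_deterministic}, $V_d$ is already expressed in terms of the $\Gamma_{j,\alpha}$.
\item The condition $h(n_{\max})\ge\lambda+1/2$ is carried over verbatim.
\end{itemize}
Their conclusion, $C_{\lambda,\delta}(V_d-1)^{\lambda}n_{\max}^{-\lambda-1/2+\delta}$, then gives the claim directly.

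\emph{Main obstacle.} The main risk is notational bookkeeping rather than mathematics. The statement in \cite{GK26} may parametrize the cosine space differently, for example via weights $\gamma_j$ versus $\gamma_j^2$ in $r_{\alpha,\bsgamma}$, or with a differently defined $V_d$ or $\lambda$-range. I will therefore align their kernel $K^{\cos}$ with ours and confirm that $\Gamma_{j,\alpha}^{1/\lambda}$, and not its square, appears in $V_d$. If their bound is stated for unrestricted random $\bsz$ uniform on $\{1,\dots,n-1\}^d$ with $n$ prime in $\PP_{n_{\max}}$, it matches Algorithm~\ref{alg:median_of_means} exactly, and no further argument is needed.
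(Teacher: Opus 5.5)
Your proposal is correct and follows the same route as the paper. The unit-ball inclusion $B(H_{\alpha,\bsgamma,d}^{\sob})\subseteq B(H_{\alpha,\underline{\kappa}_{\alpha}^{-1/2}\bsgamma,d}^{\cos})$ transfers the randomized error of the parameter-free median-of-means rule, and \cite[Theorem~10]{GK26} is then applied with the rescaled weights $\Gamma_{j,\alpha}=\underline{\kappa}_{\alpha}^{-1/2}\gamma_j\le 1$ (cf.\ Remark~\ref{rem:gamma_bounded}); your check that point evaluation is well defined via Remark~\ref{rem:absolute_convergence} is a detail the paper leaves implicit.
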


\begin{remark}\label{rem:gamma_bounded}
    In Theorems~\ref{thm:integration_randomized_cbc} and \ref{thm:integration_randomized_median}, we assume $0 < \gamma_j \le \underline{\kappa}_{\alpha}^{1/2}$ for all $j=1, \ldots, d$, which ensures $r_{\alpha,\underline{\kappa}_{\alpha}^{-1/2}\bsgamma}(\bsk) \ge 1$ for any $\bsk \in \NN_0^d$ and thereby simplifies the technical analysis. However, this result can be readily extended to general weight parameters without the restriction $\gamma_j \le \underline{\kappa}_{\alpha}^{1/2}$ by appropriately adjusting the constant factor.
\end{remark}

%%%%%%%%%%%%%%%%%%%%%%%%%%%%%%%%%%%%%%%%%%%%%%%%%%
%%%%%%%%%%%%%%%%%%%%%%%%%%%%%%%%%%%%%%%%%%%%%%%%%%
\section{Function approximation}\label{sec:approximation}

This section is devoted to multivariate function approximation over $[0,1]^d$. The key strategy employed here is ubiquitous in the literature (see, e.g., \cite{CG26,CGK26,K18,K19,KV19,KSW06,KWW09,PGK25,PKG25}): given an orthonormal system and the corresponding infinite series expansion of a target function $f$, we truncate the series into a finite sum and subsequently estimate each coefficient in the sum using a quadrature rule. 

In what follows, in addition to the half-period cosine basis, we also utilize the standard complex Fourier basis $\{e^{2\pi \mi \bsk \cdot \bsx} \mid \bsk\in \ZZ^d\}$, where the $\bsk$-th Fourier coefficient of an integrable function $g$ on $[0,1]^d$ is denoted by 
\[ \widehat{g}(\bsk) \coloneqq \int_{[0,1]^d} g(\bsx) e^{-2\pi \mi \bsk \cdot \bsx} \rd \bsx.\]
The tent transformation provides the following important connection between the half-period cosine coefficients and the Fourier coefficients.

\begin{lemma}\label{lem:cosine_to_Fourier}
    Let $f \in L_2([0,1]^d)$ have an absolutely convergent half-period cosine series. For any $\bsk \in \NN_0^d$ and $\bssigma \in \{\pm 1\}^d$, we have
    \[ 
        \widehat{f\circ \varphi}(\bssigma (\bsk)) = \frac{\widetilde{f}(\bsk)}{2^{|\bsk|_0/2}}, 
    \]
    where $\bssigma(\bsk) \coloneqq (\sigma_1 k_1, \ldots, \sigma_d k_d)$.
\end{lemma}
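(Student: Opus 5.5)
The plan is to reduce to the one-dimensional identity $\widehat{\phi_k\circ\varphi}(\sigma k') = 2^{-|k|_0/2}\delta_{k,k'}$ for $k\in\NN_0$ and $k'$ ranging over $\NN_0$, and then assemble the multivariate statement by the tensor product structure together with absolute convergence. First I compute $\phi_k\circ\varphi$ directly. For $x\in[0,1/2)$ we have $\cos(k\pi\varphi(x))=\cos(2\pi kx)$. For $x\in[1/2,1]$ we have $\cos(k\pi(2-2x))=\cos(2\pi k-2\pi kx)=\cos(2\pi kx)$. Hence $\phi_k(\varphi(x))=\sqrt{2}\cos(2\pi kx)=\frac{1}{\sqrt{2}}(e^{2\pi\mi kx}+e^{-2\pi\mi kx})$ for $k\ge1$, and $\phi_0\circ\varphi=1$. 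It follows that $\widehat{\phi_k\circ\varphi}(m)$ equals $1/\sqrt{2}$ if $m=\pm k$ with $k\ge1$, equals $1$ if $m=k=0$, and vanishes otherwise. In compact form, $\widehat{\phi_k\circ\varphi}(\sigma m)=2^{-|k|_0/2}\delta_{k,m}$ for $m\in\NN_0$ and $\sigma\in\{\pm1\}$; note that for $k=0$ both signs give the same index, which is consistent.

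In $d$ dimensions the tent map acts componentwise and $\phi_{\bsk}$ is a product, so $\phi_{\bsk}\circ\varphi(\bsx)=\prod_j\phi_{k_j}(\varphi(x_j))$. The Fourier coefficient of such a product factorizes, which gives $\widehat{\phi_{\bsk}\circ\varphi}(\bssigma(\bsl))=\prod_j 2^{-|k_j|_0/2}\delta_{k_j,\ell_j}=2^{-|\bsk|_0/2}\delta_{\bsk,\bsl}$.

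Next I pass from the basis functions to $f$. Since the series $\sum_{\bsk}|\widetilde f(\bsk)|\,\|\phi_{\bsk}\|_{L_\infty}$ converges, the series $f\circ\varphi=\sum_{\bsk}\widetilde f(\bsk)\,\phi_{\bsk}\circ\varphi$ converges uniformly on $[0,1]^d$. Composition with $\varphi$ preserves sup norms, and the equality with $f\circ\varphi$ holds a.e. because $f$ equals its $L_2$-convergent expansion a.e. and $\varphi$ maps null sets to null sets preimage-wise: it is piecewise affine with nonzero slope, so it pulls back null sets to null sets. Uniform convergence allows the Fourier integral to be interchanged with the sum, and thus $\widehat{f\circ\varphi}(\bssigma(\bsl))=\sum_{\bsk}\widetilde f(\bsk)\,2^{-|\bsk|_0/2}\delta_{\bsk,\bsl}=\widetilde f(\bsl)2^{-|\bsl|_0/2}$.

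The only step needing care is this justification of the interchange together with the a.e. identification of $f\circ\varphi$. Both follow from the absolute convergence hypothesis and the measure-preserving, piecewise-affine nature of $\varphi$; in fact $\varphi$ preserves Lebesgue measure. Everything else is the elementary trigonometric computation above.
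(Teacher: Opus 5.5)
Your proof is correct, but it takes a different route from the paper's.

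\textbf{The paper's route.} The paper never expands $f$. It writes $e^{-2\pi\mi\bssigma(\bsk)\cdot\bsx}=\prod_{j\in u}(\cos(2\pi k_jx_j)-\mi\sigma_j\sin(2\pi k_jx_j))$ with $u=\supp(\bsk)$. Every term containing a sine vanishes, because $f\circ\varphi$ is even about $x_j=1/2$ while $\sin(2\pi k_jx_j)$ is odd there. The surviving product is recognized as $2^{-|u|/2}\phi_{2\bsk}=2^{-|u|/2}\,\phi_{\bsk}\circ\varphi$. Finally, since $\varphi$ is measure preserving, $\int(f\circ\varphi)(\phi_{\bsk}\circ\varphi)$ becomes $\int f\phi_{\bsk}=\widetilde f(\bsk)$. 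This is a single change of variables valid for every integrable $f$. The absolute-convergence hypothesis is not used there at all; it only matters later, for pointwise use of the expansion.

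\textbf{Your route.} You first compute $\widehat{\phi_{\bsk}\circ\varphi}(\bssigma(\bsl))=2^{-|\bsk|_0/2}\delta_{\bsk,\bsl}$. This makes explicit that each tent-transformed cosine is a normalized sum of the exponentials $e^{2\pi\mi\bssigma(\bsk)\cdot\bsx}$, which is the structural reason for the identity. You then transfer this to $f$ by interchanging the sum and the integral. That interchange is where you spend the hypothesis, via uniform convergence and the a.e.\ identification through null-set preimages, and both steps are handled correctly.

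\textbf{Comparison.} The paper's argument is shorter and more general. Yours gives more insight into why the formula holds, at the cost of a limiting argument. You could also drop the hypothesis in your own version: $h\mapsto h\circ\varphi$ is an isometry of $L_2([0,1]^d)$ and Fourier coefficients are continuous on $L_2$, so $L_2$-convergence of the cosine series already suffices.
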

\begin{proof}
    In this proof, let us write $u = \supp(\bsk)$. The $\bssigma(\bsk)$-th Fourier coefficient of $f \circ \varphi$ is evaluated as
    \begin{align*}
        \widehat{f \circ \varphi}(\bssigma(\bsk)) & = \int_{[0,1]^d} (f \circ \varphi)(\bsx) e^{-2\pi \mi \bssigma(\bsk) \cdot \bsx} \rd \bsx \\
        & = \int_{[0,1]^d} (f \circ \varphi)(\bsx) \prod_{j\in u} \left( \cos(2\pi k_j x_j) - \mi \sigma_j \sin(2\pi k_j x_j) \right) \rd \bsx.
    \end{align*}
    By the definition of the tent transformation $\varphi$, the function $(f \circ \varphi)$ is even with respect to $x_j = 1/2$ for all $j=1,\ldots,d$. Since $\sin(2\pi k_j x_j)$ with $k_j>0$ is always an odd function with respect to $x_j = 1/2$, all terms involving the sine function vanish upon integration. Thus, the above integral simplifies to
    \[ 
        \widehat{f \circ \varphi}(\bssigma(\bsk)) = \int_{[0,1]^d} (f \circ \varphi)(\bsx) \prod_{j\in u} \cos(2\pi k_j x_j) \rd \bsx = \frac{1}{2^{|u|/2}} \int_{[0,1]^d} (f \circ \varphi)(\bsx) \phi_{2\bsk}(\bsx) \rd \bsx.
    \]
    Since $\varphi$ is a measure-preserving map on $[0,1]^d$ and $\phi_{2\bsk}(\bsx) = (\phi_{\bsk} \circ \varphi)(\bsx)$ holds componentwise, we obtain
    \[
        \widehat{f \circ \varphi}(\bssigma(\bsk)) = \frac{1}{2^{|u|/2}} \int_{[0,1]^d} f(\bsx) \phi_{\bsk}(\bsx) \rd \bsx = \frac{1}{2^{|u|/2}} \widetilde{f}(\bsk).
    \]
    This completes the proof.
\end{proof}

In the present context, due to the norm equivalence between our weighted unanchored fractional Sobolev space $H_{\alpha,\bsgamma,d}^{\mathrm{sob}}$ and the weighted half-period cosine space $H_{\alpha,\bsgamma,d}^{\mathrm{cos}}$, the half-period cosine system $\{\phi_{\bsk}\mid \bsk \in \NN_0^d\}$ is a natural and optimal choice for the underlying orthonormal basis of $L_2([0,1]^d)$. Moreover, by Remark~\ref{rem:absolute_convergence}, every $f\in H_{\alpha,\bsgamma,d}^{\sob}$ has an absolutely and uniformly convergent half-period cosine expansion. Together with Lemma~\ref{lem:cosine_to_Fourier}, this also implies the absolute convergence of the Fourier series of $f\circ\varphi$. For a target function $f \in H_{\alpha,\bsgamma,d}^{\mathrm{sob}}$ and a finite index set $\Acal_d \subset \NN_0^d$, Lemma~\ref{lem:cosine_to_Fourier} leads to
\begin{align}\label{eq:series_trunction}
    f(\bsx) = \sum_{\bsk \in \NN_0^d} \widetilde{f}(\bsk) \phi_{\bsk}(\bsx) \approx \sum_{\bsk \in \Acal_d} \widetilde{f}(\bsk) \phi_{\bsk}(\bsx) = \sum_{\bsk \in \Acal_d} \left( \frac{2^{|\bsk|_0/2}}{2^d}\sum_{\bssigma\in \{\pm 1\}^d}\widehat{f\circ \varphi}(\bssigma (\bsk))\right)\phi_{\bsk}(\bsx).
\end{align} 
For a subset $u\subseteq \{1,\ldots,d\}$, let us write
\[ 
    \Acal_{d,u} = \left\{ \bsk_u \in \NN^{|u|} \mid (\bsk_u,\bszero)\in \Acal_d\right\}, 
\]
which provides the following partition of the index set:
\[ 
    \Acal_d = \bigcup_{u\subseteq \{1,\ldots,d\}} \left\{ (\bsk_u,\bszero) \mid \bsk_u \in \Acal_{d,u} \right\}. 
\]
Moreover, we introduce the extended and symmetrized index sets of $\Acal_{d,u}$ and $\Acal_d$, defined by
\[ 
    \Acal_{d,u}^{\sym} = \left\{ \bssigma_u(\bsk_u) \in (\ZZ\setminus \{0\})^{|u|} \mid \bssigma_u\in \{\pm 1\}^{|u|}, \bsk_u\in \Acal_{d,u}\right\} 
\]
and 
\[ 
    \Acal_d^{\sym} = \bigcup_{u\subseteq \{1,\ldots,d\}} \left\{ (\bsk_u,\bszero) \mid \bsk_u \in \Acal_{d,u}^{\sym} \right\}, 
\]
respectively. Then, the right-most side of \eqref{eq:series_trunction} equals
\[ 
    \sum_{u\subseteq \{1,\ldots,d\}}\frac{1}{2^{|u|/2}}\sum_{\bsk_u\in \Acal_{d,u}^{\sym}}\widehat{f\circ \varphi}(\bsk_u,\bszero)\phi_{|\bsk_u,\bszero|}(\bsx) = \sum_{\bsk\in \Acal_{d}^{\sym}}\frac{\widehat{f\circ \varphi}(\bsk)}{2^{|\bsk|_0/2}}\phi_{|\bsk|}(\bsx),
\]
where, for any vector $\bsk \in \ZZ^d$, we denote the componentwise absolute value by $|\bsk| \coloneqq (|k_1|, \ldots, |k_d|) \in \NN_0^d$. Thus, the problem reduces to how to estimate the Fourier coefficients $\widehat{f\circ \varphi}(\bsk)$ for all $\bsk\in \Acal_{d}^{\sym}$.

%%%%%%%%%%%%%%%%%%%%%%%%%%%%%%%%%%%%%%%%%%%%%%%%%%
\subsection{Multiple lattice-based algorithm}

%%%%%%%%%%%%%%%%%%%%%%%%%%%%%%%%%%%%%%%%%%%%%%%%%%
\subsubsection{Overview}
As explained in Section~\ref{sec:intro}, using a single rank-1 lattice point set $P_{n,\bsz}$ as fixed sampling nodes is often insufficient to achieve the optimal rate of convergence in the weighted Korobov space. The same issue applies to the weighted half-period cosine space (and thus, the weighted fractional Sobolev space as well) when approximating functions by a single tent-transformed rank-1 lattice point set \cite{CKNS16}. 

The key obstacle is the so-called \emph{aliasing phenomenon}. If there exist two distinct frequencies $\bsk_1,\bsk_2\in \Acal_d^{\sym}$ such that $\bsk_1-\bsk_2\in P_{n,\bsz}^{\perp}$ (recall the definition of the dual lattice given in Section~\ref{subsec:lattice}), a single rank-1 lattice rule cannot distinguish between these frequencies. In our context, when we approximate $\widehat{f\circ \varphi}(\bsk)$ by a rank-1 lattice rule as:
\[ 
    \widehat{(f\circ \varphi)}_{n,\bsz}(\bsk)\coloneqq I((f\circ \varphi)\cdot e^{-2\pi \mi \bsk\cdot (\cdot)}; P_{n,\bsz}) = \frac{1}{n}\sum_{i=0}^{n-1} f\circ \varphi(\bsy_i)e^{-2\pi \mi \bsk\cdot \bsy_i},
\]
it holds that
\begin{align}\label{eq:aliasing}
    \widehat{(f\circ \varphi)}_{n,\bsz}(\bsk_1) = \sum_{\bsl\in P_{n,\bsz}^{\perp}}\widehat{f\circ \varphi}(\bsk_1+\bsl) = \sum_{\bsl\in P_{n,\bsz}^{\perp}}\widehat{f\circ \varphi}(\bsk_2+\bsl) = \widehat{(f\circ \varphi)}_{n,\bsz}(\bsk_2),
\end{align}
where the second equality follows from the group structure of the dual lattice. The existence of a pair of such aliased frequencies within the index set $\Acal_{2}^{\sym}=\{-\lfloor\sqrt{n}\rfloor,\ldots,-1,0,1,\ldots,\lfloor\sqrt{n}\rfloor\}^2$ was proven by the pigeonhole principle in a classical work of Smolyak \cite{S60}. This aliasing phenomenon yields the non-optimality of the convergence rate for any single rank-1 lattice-based approximation \cite{BKUV17}.

To circumvent this fundamental limitation and successfully disentangle the aliased Fourier coefficients, here we employ an algorithm based on multiple rank-1 lattice rules \cite{K18,K19,KV19}; see also the monograph \cite[Chapter~15]{DKP22} and the recent work by the authors \cite{CG26}. We note that a different approach based on a lattice with multiple shifts and a least-squares algorithm was recently studied in \cite{DD26} for the half-period cosine spaces. While their result also applies to our fractional Sobolev spaces, our aim here is to provide an $L_{\infty}$ approximation error bound with better dimension-dependence of the implied constant factor.

In the multiple lattice-based algorithm, the Fourier coefficients for all $\bsk\in \Acal_d^{\sym}$ are approximated using a collection of $L$ distinct rank-1 lattice point sets $P_{n_1,\bsz_1},\ldots,P_{n_L,\bsz_L}$. The number $L$ and the specific lattice point sets are chosen appropriately depending on the index set $\Acal_d^{\sym}$. At least one suitable lattice rule among these $L$ candidates is assigned to each frequency $\bsk\in \Acal_d^{\sym}$ such that aliasing with any other frequency $\bsl\in \Acal_d^{\sym}\setminus \{\bsk\}$ does not occur, meaning that $\bsk-\bsl \notin P_{n_t,\bsz_t}^{\perp}$ for at least one $t\in \{1,\ldots,L\}$.

For $t=1,\ldots,L$, let $\Acal_{d,t}^{\sym}\subseteq \Acal_{d}^{\sym}$ be the subset of frequencies whose Fourier coefficients are estimated using the $t$-th lattice point set $P_{n_t,\bsz_t}$. According to the above explanation, we define this subset $\Acal_{d,t}^{\sym}$ as
\[ 
    \Acal_{d,t}^{\sym} = \left\{ \bsk\in \Acal_d^{\sym} \mid \bsk-\bsl\notin P_{n_t,\bsz_t}^{\perp}\; \text{for all $\bsl\in \Acal_d^{\sym}\setminus \{\bsk\}$}\right\}. 
\]
Here, we require the covering condition
\[ 
    \bigcup_{t=1}^{L}\Acal_{d,t}^{\sym} = \Acal_{d}^{\sym},
\]
although the subsets $\Acal_{d,t}^{\sym}$ are not necessarily mutually disjoint.
Using an indicator function $\bsone_{\bullet}$, we write
\[ 
    U(\bsk) = \sum_{t=1}^{L}\bsone_{\bsk\in \Acal_{d,t}^{\sym}}
\]
for each $\bsk\in \Acal_d^{\sym}$, which counts multiplicity, i.e., how many lattices are assigned to the frequency $\bsk$. Then, the multiple tent-transformed lattice-based algorithm is given by
\begin{align}\label{eq:multiple_lattice_approx}
    A^{\mathrm{mult}}(f)(\bsx)=\sum_{t=1}^{L}\sum_{\bsk\in \Acal_{d,t}^{\sym}}\frac{\widehat{(f\circ \varphi)}_{n_t,\bsz_t}(\bsk)}{U(\bsk) 2^{|\bsk|_0/2}}\phi_{|\bsk|}(\bsx).
\end{align}

%%%%%%%%%%%%%%%%%%%%%%%%%%%%%%%%%%%%%%%%%%%%%%%%%%
\subsubsection{Construction}

We now discuss how to construct the collection of $L$ rank-1 lattice point sets for a general finite index set $\Acal_{d}^{\sym}$. Although we assume that $\Acal_{d}^{\sym}$ is a component-wise symmetric set, the result shown here can be extended to a more general index set. The case $|\Acal_d^{\sym}|=1$ is trivial. Hence, throughout this subsection, we assume that $|\Acal_d^{\sym}|\ge 2$. To maintain a reasonable ratio of the number of sampling points to the size of the index set, we adopt a probabilistic construction strategy proposed by K\"ammerer \cite[Algorithm~1]{K19}. In this approach, we use a common prime number of points $n$ for all $L$ lattices, i.e., $n_1 = \cdots = n_L = n$.

For a given frequency set $\Acal_d^{\sym}$, we first define the maximum componentwise span as
\begin{align}\label{eq:extension_length}
    N_{\Acal_d^{\sym}} \coloneqq \max_{j=1,\ldots,d}\left(\max_{\bsk\in \Acal_d^{\sym}} k_j - \min_{\bsh\in \Acal_d^{\sym}} h_j \right).
\end{align}
To avoid trivial aliasing in individual coordinates, the prime $n$ must be chosen strictly larger than $N_{\Acal_d^{\sym}}$. Note that for a single prime $n$, the condition $\bsk-\bsl \in P_{n,\bsz_t}^{\perp}$ is equivalent to $\bsk\cdot\bsz_t \equiv \bsl\cdot\bsz_t \pmod n$. The following theorem, proven in \cite[Theorem~3.2, Corollary~3.3 and Theorem~3.4]{K19}, guarantees that a relatively small number of randomly generated lattices is sufficient to cover the entire index set $\Acal_d^{\sym}$ with high probability.

\begin{theorem}[\cite{K19}]\label{thm:probabilistic_construction}
    Let $\Acal_d^{\sym}$ be a finite frequency set with maximum span $N_{\Acal_d^{\sym}}$ defined in \eqref{eq:extension_length}. For given $\delta\in (0,1)$ and $c>1$, let 
    \[ c'=\max\left\{ c,\frac{N_{\Acal_d^{\sym}}}{|\Acal_d^{\sym}|-1}\right\}, \]
    and determine two numbers
    \begin{align*}
        \tau \ge c'(|\Acal_d^{\sym}|-1), \qquad
        L = \left\lceil \left(\frac{c'}{c'-1}\right)^2 \frac{ \log |\Acal_d^{\sym}| - \log \delta}{2}\right\rceil.
    \end{align*}
    Let $n$ be the smallest prime strictly greater than $\tau$. If we choose the generating vectors $\bsz_1, \ldots, \bsz_L \in \{0,\ldots,n-1\}^d$ independently and uniformly at random, then with probability at least $1-\delta$, the covering condition $\bigcup_{t=1}^L \Acal_{d,t}^{\sym} = \Acal_d^{\sym}$ holds.
\end{theorem}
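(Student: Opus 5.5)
The argument is probabilistic: a union bound over pairs of frequencies, followed by a union bound over the frequencies themselves. Fix $\bsk\in\Acal_d^{\sym}$ and one random generating vector $\bsz$ uniform on $\{0,\ldots,n-1\}^d$. The aim is to bound the probability that $\bsk$ fails to belong to the corresponding set $\Acal_{d,t}^{\sym}$, i.e.\ that some $\bsl\in\Acal_d^{\sym}\setminus\{\bsk\}$ satisfies $(\bsk-\bsl)\cdot\bsz\equiv 0\pmod n$.

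For a fixed $\bsl\ne\bsk$, the difference $\bsh=\bsk-\bsl$ is a nonzero integer vector. Each of its components satisfies $|h_j|\le N_{\Acal_d^{\sym}}<n$. Since $n$ is prime, some component $h_j$ is a unit modulo $n$. Hence, as $z_j$ ranges uniformly over $\ZZ_n$ with the other components fixed, $\bsh\cdot\bsz$ is uniform modulo $n$, so $\PP(\bsh\cdot\bsz\equiv 0)=1/n$. The union bound over the $|\Acal_d^{\sym}|-1$ competitors gives
\[
\PP(\bsk\notin\Acal_{d,t}^{\sym})\le \frac{|\Acal_d^{\sym}|-1}{n}<\frac{|\Acal_d^{\sym}|-1}{\tau}\le \frac{1}{c'}.
\]
The condition $n>N_{\Acal_d^{\sym}}$ is guaranteed because $\tau\ge c'(|\Acal_d^{\sym}|-1)\ge N_{\Acal_d^{\sym}}$ by the choice of $c'$.

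Next I use the independence of $\bsz_1,\ldots,\bsz_L$. The failure events for $\bsk$ across the $L$ lattices are independent, each with probability at most $p\coloneqq 1/c'<1$. Hence
\[
\PP\Bigl(\bsk\notin\textstyle\bigcup_t\Acal_{d,t}^{\sym}\Bigr)\le p^L\le \exp\bigl(-2L(1-1/c')^2\bigr).
\]
The last inequality holds because $p^L=\exp(-L\log c')$ and $\log c'\ge 2(1-1/c')^2$ for $c'>1$; this is an elementary calculus fact, checked by comparing derivatives at $c'=1$. Alternatively, Hoeffding's inequality applied to the count of successes yields exactly this exponent, which is presumably the source of the constant in $L$. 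Hoeffding in fact gives the stronger statement that at least a positive fraction of the lattices succeed. Only the weaker statement, at least one success, is needed here.

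Finally, I take the union bound over the $|\Acal_d^{\sym}|$ frequencies. The failure probability is at most $|\Acal_d^{\sym}|\exp(-2L(c'-1)^2/c'^2)$. By the definition of $L$ this is at most $\delta$, since $2L((c'-1)/c')^2\ge \log|\Acal_d^{\sym}|-\log\delta$.

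The main obstacle is the single-pair estimate $\PP(\bsh\cdot\bsz\equiv0)=1/n$. It hinges on ensuring that some $h_j$ is invertible modulo $n$, which is exactly why the primality of $n$ and the inequality $n>N_{\Acal_d^{\sym}}$ are needed. The remaining difficulty is verifying the scalar inequality $\log c'\ge 2(1-1/c')^2$; if that fails on some range, I would fall back on Hoeffding's bound directly.
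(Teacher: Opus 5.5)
Your proof is correct and has the same structure as the argument in \cite{K19}, which the paper cites instead of proving the result: because $n$ is a prime exceeding $N_{\Acal_d^{\sym}}$, every nonzero difference $\bsk-\bsl$ has a component that is a unit modulo $n$, so a single random lattice aliases $\bsk$ with probability at most $(|\Acal_d^{\sym}|-1)/n<1/c'$; one then uses independence over the $L$ lattices and a union bound over $\bsk$. The only deviation is that you bound the all-fail probability by the exact product $(1/c')^L$ rather than by the Hoeffding-type tail that the constant $(c'/(c'-1))^2/2$ in $L$ reflects, and then weaken it to match; this is legitimate because $\log c'-2(1-1/c')^2$ vanishes at $c'=1$ and has derivative $(c'-2)^2/c'^3\ge 0$ on all of $[1,\infty)$ (a comparison at $c'=1$ alone would not suffice), and it shows that the smaller choice $L\ge(\log|\Acal_d^{\sym}|-\log\delta)/\log c'$ would already be enough.
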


Based on this theorem, the corresponding probabilistic algorithm for constructing the multiple rank-1 lattices is summarized in Algorithm~\ref{alg:multiple_lattice}. It should be noted that this algorithm satisfies the covering condition with a probability of at least $1-\delta$. Since this condition can be explicitly verified in each run, one can simply repeat the procedure until the condition is met. Once a valid set of $L$ rank-1 lattices is constructed and fixed, the subsequent function approximation scheme becomes entirely deterministic.

\begin{algorithm}[H]
\caption{Probabilistic construction of multiple rank-1 lattices.}
\label{alg:multiple_lattice}
\begin{algorithmic}[1]
\REQUIRE Index set $\Acal_d^{\sym}$, maximum span $N_{\Acal_d^{\sym}}$, failure probability $\delta \in (0, 1)$, and constant $c > 1$.
\STATE Compute $c' = \max\left\{ c, \frac{N_{\Acal_d^{\sym}}}{|\Acal_d^{\sym}|-1} \right\}$ and choose $\tau = c'(|\Acal_d^{\sym}|-1)$.
\STATE Calculate the required number of lattices:
\begin{align*}
    L \coloneqq \left\lceil \left( \frac{c'}{c'-1} \right)^2 \frac{\log | \Acal_d^{\sym}| - \log \delta}{2} \right\rceil.
\end{align*}
\STATE Find the smallest prime $n > \tau$.
\FOR{$t = 1$ \TO $L$}
    \STATE Choose a generating vector $\bsz_t \in \{0, \ldots, n-1\}^d$ uniformly at random.
    \STATE Identify the alias-free frequencies $\Acal_{d,t}^{\sym} \subseteq \Acal_d^{\sym}$ for the lattice $P_{n,\bsz_t}$:
    \begin{align*}
        \Acal_{d,t}^{\sym} \coloneqq \Big\{\bsk \in \Acal_d^{\sym} \ \Big|\ \bsk\cdot\bsz_t\not\equiv \bsl\cdot\bsz_t \pmod n,\ \forall\bsl\in \Acal_d^{\sym}\setminus\{\bsk \}\Big\}.
    \end{align*}
\ENDFOR
\ENSURE The prime $n$, the set of generating vectors $\{\bsz_1,\ldots,\bsz_L\}$, and the subsets $\{\Acal_{d,1}^{\sym}, \ldots, \Acal_{d,L}^{\sym}\}$.
\end{algorithmic}
\end{algorithm}

We briefly analyze the computational complexity of Algorithm~\ref{alg:multiple_lattice}. We note that, by Bertrand's postulate, the required prime $n$ is guaranteed to exist in the interval $(\tau, 2\tau]$; finding the prime $n$ in line~3 takes $\mathcal{O}(\tau \log \log \tau)$ operations using the sieve of Eratosthenes.  In each of the $L$ iterations of the for-loop, computing the dot products $\bsk \cdot \bsz_t \pmod n$ for all $\bsk \in \Acal_d^{\sym}$ requires $\mathcal{O}(d |\Acal_d^{\sym}|)$ operations. Subsequently, identifying the unique, alias-free frequencies to construct $\Acal_{d,t}^{\sym}$ can be done by sorting the computed dot products, which takes $\mathcal{O}(|\Acal_d^{\sym}| \log |\Acal_d^{\sym}|)$ operations. Therefore, the overall computational complexity of Algorithm~\ref{alg:multiple_lattice} is bounded by
\begin{align*}
    \mathcal{O}\Big( \tau \log \log \tau + L |\Acal_d^{\sym}| (d + \log |\Acal_d^{\sym}|) \Big).
\end{align*}
Since $L$ scales logarithmically with $|\Acal_d^{\sym}|$, the cost associated with the random generation and alias-checking is well controlled and typically dominated by the term scaling with $|\Acal_d^{\sym}| \log^2 |\Acal_d^{\sym}|$.

%%%%%%%%%%%%%%%%%%%%%%%%%%%%%%%%%%%%%%%%%%%%%%%%%%
\subsubsection{Weighted hyperbolic cross}

Having established the multiple lattice algorithm for a general index set, we now specify the frequency set $\Acal_d^{\sym}$ as the weighted hyperbolic cross. In our context, we first introduce the positive quadrant of the weighted hyperbolic cross with radius $M \ge 1$, defined as
\begin{align}\label{eq:weight_hyper_positive}
    \Acal_{d} \coloneqq \Acal_{d,\alpha,\bsgamma}(M)= \left\{\bsk \in \mathbb{N}^d_0 \mid r_{\alpha,\bsGamma_{\alpha}} (\bsk)\leq M  \right\},
\end{align}
where $1/2<\alpha<2$ is the smoothness parameter, and $\bsGamma_{\alpha}= \underline{\kappa}_{\alpha}^{-1/2}\bsgamma$ denotes the set of rescaled weights. Its symmetrization, which we use as the target frequency set $\Acal_d^{\sym}$ in the multiple lattice algorithm, is naturally given by
\begin{align}\label{eq:weight_hyper_sym}
    \Acal_d^{\sym} \coloneqq \Acal_{d,\alpha,\bsgamma}^{\sym}(M) = \left\{\bsk \in \mathbb{Z}^d \mid r_{\alpha,\bsGamma_{\alpha}} (|\bsk|)\leq M  \right\}.
\end{align}

To apply Algorithm~\ref{alg:multiple_lattice}, it is essential to evaluate the cardinality $|\Acal_d^{\sym}|$ and the maximum span $N_{\Acal_d^{\sym}}$. The size of the weighted hyperbolic cross can be bounded as follows. We refer to \cite[Lemma~13.1]{DKP22} or \cite[Lemma~2.4]{CG26} for the proof.

\begin{lemma}\label{lem:size_hyperbolic_cross}
    For $1/2<\alpha<2$, $d \in \NN$, a collection of positive weights $\bsgamma=(\gamma_1,\ldots,\gamma_d)$, and $M \ge 1$, let $\Acal_d^{\sym}$ be defined as in \eqref{eq:weight_hyper_sym}. Then, for any $0<\lambda <\alpha$, we have
    \begin{align*}
        |\Acal_d^{\sym}| \leq M^{1/\lambda} V_{d}(\alpha,\lambda,\bsgamma),
    \end{align*}
    where $V_{d}(\alpha,\lambda,\bsgamma)$ is defined as in Theorem~\ref{thm:integration_deterministic}.
\end{lemma}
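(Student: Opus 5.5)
The plan is the standard Rankin-type (Chebyshev) counting trick. Since $r_{\alpha,\bsGamma_\alpha}(|\bsk|)\le M$ for every $\bsk\in\Acal_d^{\sym}$, we have $\bigl(M/r_{\alpha,\bsGamma_\alpha}(|\bsk|)\bigr)^{1/\lambda}\ge 1$ on $\Acal_d^{\sym}$. Hence
\[
|\Acal_d^{\sym}| = \sum_{\bsk\in\Acal_d^{\sym}} 1 \le \sum_{\bsk\in\Acal_d^{\sym}} \frac{M^{1/\lambda}}{r_{\alpha,\bsGamma_\alpha}(|\bsk|)^{1/\lambda}} \le M^{1/\lambda}\sum_{\bsk\in\ZZ^d} \frac{1}{r_{\alpha,\bsGamma_\alpha}(|\bsk|)^{1/\lambda}},
\]
where enlarging the sum to all of $\ZZ^d$ is harmless because all terms are positive.

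Next I use the product structure of $r_{\alpha,\bsGamma_\alpha}$ to factor the full sum coordinatewise:
\[
\sum_{\bsk\in\ZZ^d} r_{\alpha,\bsGamma_\alpha}(|\bsk|)^{-1/\lambda} = \prod_{j=1}^d \sum_{k\in\ZZ} r_{\alpha,\Gamma_{j,\alpha}}(|k|)^{-1/\lambda}.
\]
In each factor, the term $k=0$ contributes $1$. The terms $\pm k$ with $k\ge1$ contribute $2\sum_{k\ge1}\Gamma_{j,\alpha}^{1/\lambda}k^{-\alpha/\lambda}=2\Gamma_{j,\alpha}^{1/\lambda}\zeta(\alpha/\lambda)$. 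The factor for coordinate $j$ is therefore exactly $1+2\Gamma_{j,\alpha}^{1/\lambda}\zeta(\alpha/\lambda)$, and the product over $j$ is $V_d(\alpha,\lambda,\bsgamma)$ as defined in Theorem~\ref{thm:integration_deterministic}.

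The only point needing care is convergence. The zeta value $\zeta(\alpha/\lambda)$ is finite precisely because $\lambda<\alpha$ gives $\alpha/\lambda>1$, which is where the hypothesis $0<\lambda<\alpha$ enters. The assumption $M\ge1$ and the range $1/2<\alpha<2$ play no essential role beyond making the set nonempty and the weights well defined. I therefore expect no real obstacle; the argument is a two-line computation once the indicator bound is written down.
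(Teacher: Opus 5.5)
Your argument is correct: bounding each point of $\Acal_d^{\sym}$ by $(M/r_{\alpha,\bsGamma_{\alpha}}(|\bsk|))^{1/\lambda}\ge 1$, extending the sum to all of $\ZZ^d$, and factorizing gives exactly $M^{1/\lambda}V_d(\alpha,\lambda,\bsgamma)$, with $\lambda<\alpha$ needed only for $\zeta(\alpha/\lambda)<\infty$. The paper does not reprove this lemma; it cites \cite[Lemma~13.1]{DKP22} and \cite[Lemma~2.4]{CG26}, where this same Rankin-type counting argument is the standard proof.
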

The cardinality of the positive quadrant $|\Acal_{d,\alpha,\bsgamma}(M)|$ is similarly bounded by
    \begin{align*}
        |\Acal_{d,\alpha,\bsgamma}(M)| \leq M^{1/\lambda} \prod_{j=1}^{d}\left(1+\Gamma_{j,\alpha}^{1/\lambda}\zeta(\alpha/\lambda)\right),
    \end{align*}
for any $0<\lambda<\alpha$.

Following the analysis given in \cite[Section~4.3]{CG26}, it holds that $N_{\Acal_d^{\sym}} \le |\Acal_d^{\sym}| - 1$ for any radius $M\ge 1$. 
This implies that the parameter $c'$ in Algorithm~\ref{alg:multiple_lattice} simply reduces to $c$. Thus, as in \cite[Corollary~A.2]{CG26}, the total number of function evaluations $n_{\mathrm{tot}}$ is bounded by
\begin{align}\label{eq:total_n_approximation}
    n_{\mathrm{tot}} & = nL\le 2\tau \left\lceil \left( \frac{c}{c-1} \right)^2 \frac{\log | \Acal_d^{\sym}| - \log \delta}{2} \right\rceil \notag \\
    & \le 2c(|\Acal_d^{\sym}| - 1)\left( \left( \frac{c}{c-1} \right)^2 \frac{\log | \Acal_d^{\sym}| - \log \delta}{2} +1 \right) \notag \\
    & \le C_{c,\delta} |\Acal_d^{\sym}| \log |\Acal_d^{\sym}| \le C_{c,\delta,\beta}|\Acal_d^{\sym}|^{1+\beta} \le C_{c,\delta,\beta}M^{(1+\beta)/\lambda}(V_{d}(\alpha,\lambda,\bsgamma))^{1+\beta},
\end{align}
for any $0<\lambda<\alpha$ and $0<\beta<1$, where the third inequality follows from the elementary inequality $\log x\le x^\beta/\beta$, and the last inequality stems from Lemma~\ref{lem:size_hyperbolic_cross}.
 
%%%%%%%%%%%%%%%%%%%%%%%%%%%%%%%%%%%%%%%%%%%%%%%%%%
\subsection{Error Analysis}

Finally, we prove an upper bound on the worst-case $L_{\infty}$ error of the multiple tent-transformed lattice algorithm $A^{\mathrm{mult}}$ given in \eqref{eq:multiple_lattice_approx}. Here, the worst-case $L_{\infty}$ error is defined by
\[ e_{\infty}^{\mathrm{wor}}(H_{\alpha,\bsgamma,d}^{\mathrm{sob}}; A^{\mathrm{mult}}) \coloneqq \sup_{\substack{f\in H_{\alpha,\bsgamma,d}^{\sob}\\ \|f\|_{\alpha,\bsgamma,d}^{\sob}\le 1}} \|f-A^{\mathrm{mult}}(f)\|_{L_{\infty}}, \]
where the $L_{\infty}$ error for an individual function is simply given by
\[ \|f-A^{\mathrm{mult}}(f)\|_{L_{\infty}} =\sup_{\bsx\in [0,1]^d}\left|f(\bsx)-A^{\mathrm{mult}}(f)(\bsx)\right|.\]

\begin{theorem}\label{thm:approximation}
    For $1/2<\alpha<2$, $d \in \NN$, a collection of positive weights $\bsgamma=(\gamma_1,\ldots,\gamma_d)$, and $M \ge 1$, let $\Acal_d^{\sym}$ be defined as in \eqref{eq:weight_hyper_sym}. Consider the multiple tent-transformed lattice algorithm   \eqref{eq:multiple_lattice_approx}, where the number of lattices $L$ and the lattice parameters $(n_t,\bsz_t)$ for $t=1,\ldots,L$ are chosen by Algorithm~\ref{alg:multiple_lattice} with inputs $0<\delta<1$ and $c>1$, so that the covering condition $\bigcup_{t=1}^L \Acal_{d,t}^{\sym} = \Acal_d^{\sym}$ is met.
    Then, we have
    \[ e_{\infty}^{\mathrm{wor}}(H_{\alpha,\bsgamma,d}^{\mathrm{sob}}; A^{\mathrm{mult}}) \le C_{c,\delta,\beta,\lambda}\frac{\left(V_{d}(\alpha,\lambda,\bsgamma)\right)^{\lambda}}{n_{\mathrm{tot}}^{\lambda/(1+\beta)-1/2}}, \]
    for any $1/2<\lambda<\alpha$ and $0<\beta<\min(1,2\lambda-1)$. Here, $n_{\mathrm{tot}}=n_1+\cdots+n_L=nL$ denotes the total number of function evaluations, $C_{c,\delta,\beta,\lambda}>0$ is a constant depending only on $c,\delta,\beta,\lambda$, and $V_{d}(\alpha,\lambda,\bsgamma)$ is defined as in Theorem~\ref{thm:integration_deterministic}.
\end{theorem}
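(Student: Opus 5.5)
We first reduce to the half-period cosine setting and then bound truncation and aliasing errors separately. By Remark~\ref{rem:tractability_transfer1} and Remark~\ref{rem:tractability_transfer2}, and the integer case $\alpha=1$, the unit ball of $H_{\alpha,\bsgamma,d}^{\sob}$ is contained in the unit ball of $H_{\alpha,\bsGamma_\alpha,d}^{\cos}$ with $\bsGamma_\alpha=\underline{\kappa}_\alpha^{-1/2}\bsgamma$. It therefore suffices to bound $\|f-A^{\mathrm{mult}}(f)\|_{L_\infty}$ for $f$ with $\|f\|_{\alpha,\bsGamma_\alpha,d}^{\cos}\le 1$. By Remark~\ref{rem:absolute_convergence} and Lemma~\ref{lem:cosine_to_Fourier}, the cosine series of $f$ and the Fourier series of $f\circ\varphi$ converge absolutely. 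Hence the aliasing formula \eqref{eq:aliasing} holds pointwise:
\[
\widehat{(f\circ\varphi)}_{n,\bsz_t}(\bsk)=\widehat{f\circ\varphi}(\bsk)+\sum_{\bsl\in P_{n,\bsz_t}^\perp\setminus\{\bszero\}}\widehat{f\circ\varphi}(\bsk+\bsl).
\]
We split
\[
f-A^{\mathrm{mult}}(f)=\sum_{\bsk\notin\Acal_d}\widetilde f(\bsk)\phi_{\bsk}+\Bigl(\sum_{\bsk\in\Acal_d}\widetilde f(\bsk)\phi_{\bsk}-A^{\mathrm{mult}}(f)\Bigr),
\]
using the identity following \eqref{eq:series_trunction}.

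\textbf{Truncation term.} We use $\|\phi_{\bsk}\|_{L_\infty}=2^{|\bsk|_0/2}$ and Cauchy--Schwarz as in Remark~\ref{rem:absolute_convergence}. For $\bsk\notin\Acal_d$ we have $r_{\alpha,\bsGamma_\alpha}(\bsk)>M$. Writing $r^{-2}=r^{-2+1/\lambda}r^{-1/\lambda}\le M^{-2+1/\lambda}r^{-1/\lambda}$ for $\lambda>1/2$, we get
\[
\sum_{\bsk\notin\Acal_d}2^{|\bsk|_0}r^{-2}_{\alpha,\bsGamma_\alpha}(\bsk)\le M^{1/\lambda-2}V_d(\alpha,\lambda,\bsgamma).
\]
So the truncation error is at most $M^{1/(2\lambda)-1}V_d^{1/2}$.

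\textbf{Aliasing term.} For $\bsk\in\Acal_{d,t}^{\sym}$, the error in $\widehat{(f\circ\varphi)}_{n,\bsz_t}(\bsk)$ is the sum over $\bsl\ne\bszero$ of $\widehat{f\circ\varphi}(\bsk+\bsl)$. By the alias-free property defining $\Acal_{d,t}^{\sym}$, every such $\bsk+\bsl$ lies outside $\Acal_d^{\sym}$. Since each $\bsk$ is averaged over its $U(\bsk)$ lattices and weighted by $2^{-|\bsk|_0/2}\|\phi_{|\bsk|}\|_\infty=1$, the aliasing error is bounded by
\[
\max_t\sum_{\bsk\in\Acal_{d,t}^{\sym}}\sum_{\bsl\in P^\perp\setminus\{\bszero\}}|\widehat{f\circ\varphi}(\bsk+\bsl)|.
\]
For a fixed $t$, the map $(\bsk,\bsl)\mapsto\bsk+\bsl$ is injective on $\Acal_{d,t}^{\sym}\times P^\perp$. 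Indeed, if two alias-free $\bsk,\bsk'$ satisfied $\bsk-\bsk'\in P^\perp$, they would alias. Hence the double sum is at most
\[
\sum_{\bsh\in\ZZ^d\setminus\Acal_d^{\sym}}|\widehat{f\circ\varphi}(\bsh)|=\sum_{\bsk\notin\Acal_d}2^{|\bsk|_0}\,2^{-|\bsk|_0/2}|\widetilde f(\bsk)|,
\]
by Lemma~\ref{lem:cosine_to_Fourier}. This is bounded exactly as the truncation term. Summing over $t$ would cost a factor $L$, but the max over $t$ together with the averaging by $U(\bsk)$ avoids this.

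\textbf{Conversion to $n_{\mathrm{tot}}$.} The total error is at most
\[
2\,M^{1/(2\lambda)-1}V_d(\alpha,\lambda,\bsgamma)^{1/2}.
\]
From \eqref{eq:total_n_approximation}, $n_{\mathrm{tot}}\le C M^{(1+\beta)/\lambda}V_d^{1+\beta}$, so $M\ge (n_{\mathrm{tot}}/C)^{\lambda/(1+\beta)}V_d^{-\lambda}$. Substituting, and using $1-1/(2\lambda)>0$, we obtain
\[
M^{-(1-1/(2\lambda))}\le C' n_{\mathrm{tot}}^{-\lambda/(1+\beta)+1/(2(1+\beta))}\,V_d^{\lambda-1/2}.
\]
Multiplying by $V_d^{1/2}$ gives the factor $V_d^{\lambda}$. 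The exponent of $n_{\mathrm{tot}}$ is $\lambda/(1+\beta)-1/(2(1+\beta))$, which is at least $\lambda/(1+\beta)-1/2$, so the stated weaker rate follows. The condition $\beta<2\lambda-1$ makes the exponent positive.

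\textbf{Main obstacle.} The delicate step is the injectivity argument and the careful accounting of the averaging by $U(\bsk)$, so that no factor $L$ appears. If that fails, we would absorb $L\lesssim\log|\Acal_d^{\sym}|$ via $\log x\le x^\beta/\beta$, as in \eqref{eq:total_n_approximation}.
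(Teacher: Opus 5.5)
Your truncation estimate is correct (the splitting $r^{-2}\le M^{-2+1/\lambda}r^{-1/\lambda}$ even gives Lemma~\ref{lem:sum_weights_hyperbolic_cross} with constant $1$), and your within-lattice injectivity argument is exactly Lemma~\ref{lem:multiple_rank-1_property}. The gap is the claim that the aliasing error is bounded by a maximum over $t$. After the triangle inequality, what you actually have is
\[
\sum_{t=1}^{L}\sum_{\bsk\in\Acal_{d,t}^{\sym}}\frac{1}{U(\bsk)}\sum_{\bsl\in P_{n,\bsz_t}^{\perp}\setminus\{\bszero\}}\bigl|\widehat{f\circ\varphi}(\bsk+\bsl)\bigr| .
\]
For each fixed $\bsk$, the weights $1/U(\bsk)$ sum to one over the lattices containing $\bsk$. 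But they vary with $\bsk$, so this expression is not a convex combination of the per-lattice sums and cannot be bounded by their maximum. Re-index by $\bsh=\bsk+\bsl\notin\Acal_d^{\sym}$. Your injectivity holds only for fixed $t$, and a single $\bsh$ may alias onto some $\bsk_t\in\Acal_{d,t}^{\sym}$ in every lattice $t$. Then $|\widehat{f\circ\varphi}(\bsh)|$ enters with total weight $\sum_t 1/U(\bsk_t)$. The covering condition only guarantees $U(\bsk)\ge 1$, so this weight can be as large as $L$. For instance, the sets $\Acal_{d,t}^{\sym}$ could be pairwise disjoint, so that $U\equiv 1$ and the averaging does nothing. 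Removing $L$ this way would require a uniform lower bound $U(\bsk)\gtrsim L$, which is not among the hypotheses. This is why the paper bounds the whole error by $(L+1)\sum_{\bsh\in\ZZ^d\setminus\Acal_d^{\sym}}|\widehat{f\circ\varphi}(\bsh)|$.

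Your fallback is precisely the paper's route and does close the proof, but it changes the bookkeeping. Use $\log x\le\frac{2}{\beta}x^{\beta/2}$ together with Lemma~\ref{lem:size_hyperbolic_cross} to get $L+1\lesssim M^{\beta/(2\lambda)}V_d^{\beta/2}$. The error is then $\lesssim V_d^{(1+\beta)/2}M^{-(1-(1+\beta)/(2\lambda))}$. The condition $\beta<2\lambda-1$ is needed exactly to make this $M$-exponent positive. Inserting $M\gtrsim n_{\mathrm{tot}}^{\lambda/(1+\beta)}V_d^{-\lambda}$ from \eqref{eq:total_n_approximation} yields exactly $V_d^{\lambda}n_{\mathrm{tot}}^{-(\lambda/(1+\beta)-1/2)}$. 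Using $L\lesssim|\Acal_d^{\sym}|^{\beta}$ instead would only give the weaker exponent $(\lambda-1/2-\beta)/(1+\beta)$. That the theorem's exponent is $\lambda/(1+\beta)-1/2$ rather than your $(\lambda-1/2)/(1+\beta)$ reflects this factor $L$. Your apparent improvement comes entirely from the invalid max-over-$t$ step.
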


In the proof of Theorem~\ref{thm:approximation}, we need the following two auxiliary results. 

\begin{lemma}\label{lem:multiple_rank-1_property}
Let $\Acal_d^{\sym}$ be the weighted hyperbolic cross defined in \eqref{eq:weight_hyper_sym} with radius $M\ge 1$, and let $\{P_{n_t, \bsz_t}\}_{t=1}^L$ be the multiple rank-1 lattice point sets obtained by Algorithm~\ref{alg:multiple_lattice} with $n_1=\cdots=n_L$, so that the covering condition $\bigcup_{t=1}^L \Acal_{d,t}^{\sym} = \Acal_d^{\sym}$ is met. Then, for any $t\in \{1,\ldots,L\}$ and any non-zero vector $\bsl\in P_{n_t,\bsz_t}^{\perp}\setminus\{ \bszero \}$, the following properties hold:
\begin{enumerate}
    \item For any $\bsk\in \Acal^{\sym}_{d,t}$, we have
        \begin{align*}
            \bsk+\bsl\notin \Acal_d^{\sym}.
        \end{align*}
    \item The sets
        \begin{align*}
            \{\bsk+\bsl \mid \bsl\in   P_{n_t,\bsz_t}^{\perp}\setminus\{ \bszero \} \}
        \end{align*}
        for distinct $\bsk\in \Acal_{d,t}^{\sym}$ are pairwise disjoint.
\end{enumerate}
\end{lemma}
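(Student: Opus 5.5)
Both items follow directly from the definition of $\Acal_{d,t}^{\sym}$ together with the subgroup structure of the dual lattice $P_{n_t,\bsz_t}^{\perp}=\{\bsh\in\ZZ^d \mid \bsh\cdot\bsz_t\equiv 0 \pmod{n_t}\}$. Fix $t$ and write $\Lambda_t^\perp\coloneqq P_{n_t,\bsz_t}^{\perp}$. The only facts needed are that $\Lambda_t^\perp$ is an additive subgroup of $\ZZ^d$ (closed under negation and addition), and that, by the definition used in Algorithm~\ref{alg:multiple_lattice}, every $\bsk\in\Acal_{d,t}^{\sym}$ satisfies $\bsk-\bsh\notin\Lambda_t^\perp$ for all $\bsh\in\Acal_d^{\sym}\setminus\{\bsk\}$. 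The specific hyperbolic-cross shape of $\Acal_d^{\sym}$ and the equality $n_1=\cdots=n_L$ play no role beyond making these definitions meaningful.

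For the first item, argue by contradiction. Let $\bsk\in\Acal_{d,t}^{\sym}$ and $\bsl\in\Lambda_t^\perp\setminus\{\bszero\}$, and suppose $\bsh\coloneqq\bsk+\bsl\in\Acal_d^{\sym}$. Since $\bsl\neq\bszero$, we have $\bsh\neq\bsk$, so $\bsh\in\Acal_d^{\sym}\setminus\{\bsk\}$. But $\bsk-\bsh=-\bsl\in\Lambda_t^\perp$, because the dual lattice is closed under negation. This contradicts the alias-free property defining $\Acal_{d,t}^{\sym}$, so $\bsk+\bsl\notin\Acal_d^{\sym}$.

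For the second item, take distinct $\bsk,\bsk'\in\Acal_{d,t}^{\sym}$ and suppose $\bsk+\bsl=\bsk'+\bsl'$ for some $\bsl,\bsl'\in\Lambda_t^\perp\setminus\{\bszero\}$. Then $\bsk-\bsk'=\bsl'-\bsl\in\Lambda_t^\perp$ by the subgroup property. Since $\bsk'\in\Acal_d^{\sym}\setminus\{\bsk\}$, this again contradicts $\bsk\in\Acal_{d,t}^{\sym}$. Hence the two shifted sets are disjoint.

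Neither step presents a real obstacle. The one point requiring care is to confirm that the membership criterion for $\Acal_{d,t}^{\sym}$ is the congruence condition $\bsk\cdot\bsz_t\not\equiv\bsh\cdot\bsz_t \pmod{n_t}$, which is equivalent to $\bsk-\bsh\notin\Lambda_t^\perp$ via the dual-lattice characterization in Subsection~\ref{subsec:lattice}. This equivalence holds for arbitrary $\bsz_t$, including vectors with $z_1=0$, since the characterization $\{\bsh\in\ZZ^d\mid\bsh\cdot\bsz\equiv 0\}$ is the one used throughout.
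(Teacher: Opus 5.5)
Your proof is correct. Both items follow, exactly as you show, from the alias-free property defining $\Acal_{d,t}^{\sym}$ together with the fact that $P_{n_t,\bsz_t}^{\perp}$ is an additive subgroup of $\ZZ^d$; neither the hyperbolic-cross shape nor the covering condition is needed. The paper gives no proof of its own and instead cites \cite[Lemma~4.1]{CG26} (see also \cite{KV19,DKP22}), where this same direct contradiction argument is the standard one, so your proposal takes essentially the same approach.
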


Although our direct reference to this result is \cite[Lemma~4.1]{CG26}, the result itself has been available in the literature (see, for instance, \cite[Proof of Lemma~3.1]{KV19} and \cite[Proof of Lemma~15.6]{DKP22}). The other auxiliary result is shown in \cite[Lemma~2.5]{CG26}:

\begin{lemma}\label{lem:sum_weights_hyperbolic_cross}
    Let $\Acal_d^{\sym}$ be the weighted hyperbolic cross defined in \eqref{eq:weight_hyper_sym} with radius $M\ge 1$. Then, for any $1/2<\lambda<\alpha$, we have
    \[ \sum_{\bsk\in \ZZ^d\setminus \Acal_d^{\sym}}\frac{1}{r_{\alpha,\bsGamma_{\alpha}}^2(|\bsk|)} \le \frac{1}{M^{2-1/\lambda}}\,\frac{8(3-1/\lambda)}{2-1/\lambda}V_{d}(\alpha,\lambda,\bsgamma), \]
    where $V_{d}(\alpha,\lambda,\bsgamma)$ is defined as in Theorem~\ref{thm:integration_deterministic}.
\end{lemma}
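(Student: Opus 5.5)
The plan is a Rankin-type splitting of the exponent: on the complement of the hyperbolic cross each weight $r_{\alpha,\bsGamma_\alpha}(|\bsk|)$ exceeds $M$, so part of the power $r^{-2}$ can be traded for a power of $M$. What remains is a summable power of $r^{-1}$, and its full sum over $\ZZ^d$ factorizes into exactly $V_{d}(\alpha,\lambda,\bsgamma)$. This should give the claim with a constant even better than the stated $8(3-1/\lambda)/(2-1/\lambda)$.

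First, fix $1/2<\lambda<\alpha$ and write
\[ \frac{1}{r_{\alpha,\bsGamma_{\alpha}}^2(|\bsk|)} = \frac{1}{r_{\alpha,\bsGamma_{\alpha}}^{2-1/\lambda}(|\bsk|)}\cdot \frac{1}{r_{\alpha,\bsGamma_{\alpha}}^{1/\lambda}(|\bsk|)}. \]
By the definition \eqref{eq:weight_hyper_sym} of $\Acal_d^{\sym}$, every $\bsk\in\ZZ^d\setminus\Acal_d^{\sym}$ satisfies $r_{\alpha,\bsGamma_{\alpha}}(|\bsk|)>M$. Since $\lambda>1/2$, the exponent $2-1/\lambda$ is positive, and hence the first factor is at most $M^{-(2-1/\lambda)}$.

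Second, I will bound the remaining sum over the complement by the sum over all of $\ZZ^d$ and use the product structure of $r_{\alpha,\bsGamma_\alpha}$. Each coordinate contributes
\[ \sum_{k\in\ZZ} r_{\alpha,\Gamma_{j,\alpha}}^{-1/\lambda}(|k|) = 1+2\Gamma_{j,\alpha}^{1/\lambda}\sum_{k=1}^\infty k^{-\alpha/\lambda} = 1+2\Gamma_{j,\alpha}^{1/\lambda}\zeta(\alpha/\lambda). \]
The zeta value is finite because $\alpha/\lambda>1$. Taking the product over $j=1,\ldots,d$ gives exactly $V_{d}(\alpha,\lambda,\bsgamma)$ as defined in Theorem~\ref{thm:integration_deterministic}.

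Combining the two steps gives
\[ \sum_{\bsk\in\ZZ^d\setminus\Acal_d^{\sym}} r_{\alpha,\bsGamma_{\alpha}}^{-2}(|\bsk|) \le M^{-(2-1/\lambda)}\, V_{d}(\alpha,\lambda,\bsgamma). \]
The stated form then follows because $8(3-1/\lambda)/(2-1/\lambda)\ge 8>1$. There is no real obstacle here. The only points to check are that $\lambda>1/2$ is what makes $2-1/\lambda$ positive, and that $\lambda<\alpha$ is what makes the zeta series converge. The larger constant in the statement presumably comes from a dyadic-shell argument in \cite{CG26}, which is not needed for this bound.
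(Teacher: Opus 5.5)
Your proof is correct. It in fact gives a stronger bound: the constant $8(3-1/\lambda)/(2-1/\lambda)$ can be replaced by $1$.

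The paper does not prove this lemma itself; it cites \cite[Lemma~2.5]{CG26}. The form of the constant there, $8\bigl(1+\frac{1}{2-1/\lambda}\bigr)$, suggests a level-set argument. In such an argument the complement of the cross is split into shells $\{\bsk : \ell< r_{\alpha,\bsGamma_\alpha}(|\bsk|)\le \ell+1\}$, and the shells are controlled through the cardinality estimate of Lemma~\ref{lem:size_hyperbolic_cross} by summation by parts. The resulting series $\sum_{\ell\ge M}\ell^{1/\lambda-3}$ is then compared with an integral, and crude factors such as $(\ell+1)^{1/\lambda}\le 4\ell^{1/\lambda}$ produce the $8$.

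Your Rankin-type splitting $r^{-2}\le M^{-(2-1/\lambda)}r^{-1/\lambda}$ on the complement skips the counting step entirely. It is the same device that proves Lemma~\ref{lem:size_hyperbolic_cross} (via $\bsone_{r\le M}\le (M/r)^{1/\lambda}$), applied directly to the tail sum. Nothing is therefore lost to shell boundaries or to the integral comparison.

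What the shell route buys is modularity. It needs only a growth bound $|\{\bsk : r(|\bsk|)\le t\}|\lesssim t^{1/\lambda}$, which is a weaker hypothesis than the convergence of $\sum_{\bsk} r^{-1/\lambda}(|\bsk|)$ that you use. In the present setting, however, both hold for every $\lambda<\alpha$, and your argument is shorter and sharper.

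A side benefit: your constant would remove the factor $\sqrt{8(3-1/\lambda)/(2-1/\lambda)}$, which blows up as $\lambda\to 1/2^{+}$, from the constant in Theorem~\ref{thm:approximation}.
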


Now we are ready to prove Theorem~\ref{thm:approximation}.
\begin{proof}
For any $f\in H_{\alpha,\bsgamma,d}^{\mathrm{sob}}$, consider its half-period cosine expansion. Then it follows from Lemma~\ref{lem:cosine_to_Fourier} that
\begin{align*}
    f(\bsx) = \sum_{\bsk \in \NN_0^d} \widetilde{f}(\bsk) \phi_{\bsk}(\bsx) = \sum_{\bsk \in \ZZ^d} \frac{\widetilde{f}(|\bsk|)}{2^{|\bsk|_0}} \phi_{|\bsk|}(\bsx) = \sum_{\bsk \in \ZZ^d} \frac{\widehat{f\circ \varphi}(\bsk)}{2^{|\bsk|_0/2}} \phi_{|\bsk|}(\bsx).
\end{align*}
Thus, the $L_{\infty}$ error is bounded as
\begin{align*}
\|f-A^{\mathrm{mult}}(f)\|_{L_{\infty}} 
&= \left\|\sum_{\bsk \in \ZZ^d} \frac{\widehat{f\circ \varphi}(\bsk)}{2^{|\bsk|_0/2}} \phi_{|\bsk|}(\bsx) -\sum_{t=1}^{L}\sum_{\bsk\in \Acal_{d,t}^{\sym}}\frac{\widehat{(f\circ \varphi)}_{n_t,\bsz_t}(\bsk)}{U(\bsk) 2^{|\bsk|_0/2}}\phi_{|\bsk|}(\bsx)\right\|_{L_{\infty}} \\
& \le \left\|\sum_{\bsk \in \Acal_d^{\sym}} \frac{\widehat{f\circ \varphi}(\bsk)}{2^{|\bsk|_0/2}} \phi_{|\bsk|}(\bsx) -\sum_{t=1}^{L}\sum_{\bsk\in \Acal_{d,t}^{\sym}}\frac{\widehat{(f\circ \varphi)}_{n_t,\bsz_t}(\bsk)}{U(\bsk) 2^{|\bsk|_0/2}}\phi_{|\bsk|}(\bsx)\right\|_{L_{\infty}} \\
& \quad + \left\|\sum_{\bsk \in \ZZ^d\setminus \Acal_d^{\sym}} \frac{\widehat{f\circ \varphi}(\bsk)}{2^{|\bsk|_0/2}} \phi_{|\bsk|}(\bsx)\right\|_{L_{\infty}}\\
& = \left\|\sum_{t=1}^{L}\sum_{\bsk\in \Acal_{d,t}^{\sym}} \frac{\widehat{(f\circ \varphi)}(\bsk)-\widehat{(f\circ \varphi)}_{n_t,\bsz_t}(\bsk)}{U(\bsk) 2^{|\bsk|_0/2}}\phi_{|\bsk|}(\bsx)\right\|_{L_{\infty}} \\
& \quad + \left\|\sum_{\bsk \in \ZZ^d\setminus \Acal_d^{\sym}} \frac{\widehat{f\circ \varphi}(\bsk)}{2^{|\bsk|_0/2}} \phi_{|\bsk|}(\bsx)\right\|_{L_{\infty}}\\
&\leq \sum_{t=1}^{L}\sum_{\bsk\in \Acal_{d,t}^{\sym}} \left|\widehat{(f\circ \varphi)}(\bsk)-\widehat{(f\circ \varphi)}_{n_t,\bsz_t}(\bsk)  \right|+\sum_{\bsk \in \ZZ^d\setminus \Acal_d^{\sym}} \left|\widehat{(f\circ \varphi)}(\bsk) \right| .
\end{align*}

For every fixed $t\in \{1,\ldots,L\}$, the equality given in \eqref{eq:aliasing} and Lemma~\ref{lem:multiple_rank-1_property} leads to
\begin{align*}
    \sum_{\bsk\in \Acal_{d,t}^{\sym}} \left|\widehat{(f\circ \varphi)}(\bsk)-\widehat{(f\circ \varphi)}_{n_t,\bsz_t}(\bsk)  \right| & = \sum_{\bsk\in \Acal_{d,t}^{\sym}} \left|\sum_{\bsl\in P_{n_t,\bsz_t}^{\perp}\setminus \{\bszero\}}\widehat{f\circ \varphi}(\bsk+\bsl) \right| \\
    & \le \sum_{\bsk\in \Acal_{d,t}^{\sym}} \sum_{\bsl\in P_{n_t,\bsz_t}^{\perp}\setminus \{\bszero\}}\left|\widehat{f\circ \varphi}(\bsk+\bsl) \right| \\
    & \le \sum_{\bsk \in \ZZ^d\setminus \Acal_d^{\sym}} \left|\widehat{(f\circ \varphi)}(\bsk) \right| .
\end{align*}
Therefore, by using Lemma~\ref{lem:cosine_to_Fourier}, the Cauchy-Schwarz inequality, the symmetry of $\Acal_d^{\sym}$ and Lemma~\ref{lem:sum_weights_hyperbolic_cross} in this order, we obtain
\begin{align*}
\|f-A^{\mathrm{mult}}(f)\|_{L_{\infty}} 
& \le (L+1)\sum_{\bsk \in \ZZ^d\setminus \Acal_d^{\sym}} \left|\widehat{(f\circ \varphi)}(\bsk) \right| \\
& = (L+1)\sum_{\bsk \in \ZZ^d\setminus \Acal_d^{\sym}} \frac{\left|\widetilde{f}(|\bsk|) \right|}{2^{|\bsk|_0/2}}\\
& \le (L+1)\left(\sum_{\bsk \in \ZZ^d\setminus \Acal_d^{\sym}} \frac{\left|\widetilde{f}(|\bsk|) \right|^2 r_{\alpha,\bsGamma_{\alpha}}^2(|\bsk|)}{2^{|\bsk|_0}}\right)^{1/2}\left(\sum_{\bsk\in \ZZ^d\setminus \Acal_d^{\sym}}\frac{1}{r_{\alpha,\bsGamma_{\alpha}}^2(|\bsk|)}\right)^{1/2}\\
& = (L+1)\left(\sum_{\bsk \in \NN_0^d\setminus \Acal_d} \left|\widetilde{f}(\bsk) \right|^2 r_{\alpha,\bsGamma_{\alpha}}^2(|\bsk|)\right)^{1/2}\left(\sum_{\bsk\in \ZZ^d\setminus \Acal_d^{\sym}}\frac{1}{r_{\alpha,\bsGamma_{\alpha}}^2(|\bsk|)}\right)^{1/2}\\
& \le (L+1) \|f\|_{\alpha,\bsGamma_{\alpha},d}^{\cos} \left(\sum_{\bsk\in \ZZ^d\setminus \Acal_d^{\sym}}\frac{1}{r_{\alpha,\bsGamma_{\alpha}}^2(|\bsk|)}\right)^{1/2} \\
& \le (L+1) \frac{\|f\|_{\alpha,\bsgamma,d}^{\sob}}{M^{1-1/(2\lambda)}}\,\sqrt{\frac{8(3-1/\lambda)}{2-1/\lambda}}(V_{d}(\alpha,\lambda,\bsgamma))^{1/2}.
\end{align*}

Since the number of lattices $L$ satisfies
\begin{align*}
    L+1 & \le 2L\le C'_{c,\delta}\log |\Acal_d^{\sym}|\le \frac{2C'_{c,\delta}}{\beta}|\Acal_d^{\sym}|^{\beta/2}\\
    & \le \frac{2C'_{c,\delta}}{\beta}M^{\beta/(2\lambda)} (V_{d}(\alpha,\lambda,\bsgamma))^{\beta/2},
\end{align*}
and the total number of function evaluations $n_{\mathrm{tot}}$ is bounded as shown in \eqref{eq:total_n_approximation}, we obtain
\begin{align*}
    e_{\infty}^{\mathrm{wor}}(H_{\alpha,\bsgamma,d}^{\mathrm{sob}}; A^{\mathrm{mult}}) & \le \frac{2C'_{c,\delta}}{\beta}\sqrt{\frac{8(3-1/\lambda)}{2-1/\lambda}}\, \frac{(V_{d}(\alpha,\lambda,\bsgamma))^{(1+\beta)/2}}{M^{1-(1+\beta)/(2\lambda)}} \\
    & \le C_{c,\delta,\beta,\lambda}\frac{\left(V_{d}(\alpha,\lambda,\bsgamma)\right)^{\lambda}}{n_{\mathrm{tot}}^{\lambda/(1+\beta)-1/2}},
\end{align*}
for any $1/2<\lambda<\alpha$ and $0<\beta<\min(1,2\lambda-1)$. This completes the proof.
\end{proof}

\begin{remark}
    By letting $\lambda\to \alpha^{-}$ and $\beta\to 0^{+}$ in Theorem~\ref{thm:approximation}, the corresponding rate of convergence is arbitrarily close to $\Ocal(n_{\mathrm{tot}}^{-\alpha+1/2})$. This rate for $L_{\infty}$ approximation is known to be optimal up to logarithmic factors for the Korobov space \cite{BDSU16}. Although establishing a corresponding lower bound
    is beyond the scope of this paper, it is plausible that the
    lower-bound argument for the Korobov space can be adapted to the
    half-period cosine space. Thus, the optimal rate for the
    half-period cosine space is expected to be $\Ocal(n_{\mathrm{tot}}^{-\alpha+1/2})$ up to logarithmic factors. By the norm equivalence, the multiple tent-transformed lattice-based algorithm achieves the same upper convergence rate in our fractional Sobolev spaces with $1/2<\alpha<2$. This rate is therefore expected to be optimal up to logarithmic factors.

    Moreover, the $L_{\infty}$ error bound is further bounded independently of the dimension $d$ under the summability condition
    \[ \sum_{j=1}^{\infty} \gamma_j^{1/\alpha} < \infty. \]  
    This condition coincides with the sufficient condition obtained for deterministic integration; see Remark~\ref{rem:tractability_deterministic}.
\end{remark}

%%%%%%%%%%%%%%%%%%%%%%%%%%%%%%%%%%%%%%%%%%%%%%%%%%
%%%%%%%%%%%%%%%%%%%%%%%%%%%%%%%%%%%%%%%%%%%%%%%%%%
\section{Numerical experiments}
We conclude this paper by illustrating the theoretical results established in Sections~\ref{sec:integration} and~\ref{sec:approximation} through numerical experiments on multivariate integration and function approximation in our fractional Sobolev spaces.

%%%%%%%%%%%%%%%%%%%%%%%%%%%%%%%%%%%%%%%%%%%%%%%%%%
\subsection{Numerical integration}
We first investigate the empirical performance of the median-of-means tent-transformed rank-1 lattice rule introduced
in Algorithm~\ref{alg:median_of_means}. A more comprehensive comparison, including Algorithms~\ref{alg:cbc_lattice} and~\ref{alg:randomized_cbc_lattice}, as well as other randomized lattice constructions such as those proposed in \cite{G26,KNW23}, is beyond the scope of this paper.

For $\rho\in(0,2)$ and a vector of positive amplitudes
$\bsa=(a_1,\ldots,a_d)$, we consider the product test function
\[ f_{\rho,\bsa}(\bsx)=\prod_{j=1}^{d}\left( 1+a_j g_{\rho}(x_j)\right),\]
where
\[ g_{\rho}(x) = \frac{(\rho+1)(\rho+2)}{2}x^{\rho}\left( 1-\frac{\rho}{\rho+1}x\right)-1. \]
Direct integration gives $I(g_{\rho})=0$ and hence $I(f_{\rho,\bsa})=1$ for every $\rho$ and every choice of $\bsa$. Furthermore, 
\[ g'_{\rho}(x)=\frac{\rho(\rho+1)(\rho+2)}{2}x^{\rho-1}(1-x). \]
Thus, $g'_{\rho}(1)=0$, and, whenever $\rho>1$, we also have $g'_{\rho}(0)=0$, consistently with the boundary constraints arising when $\alpha>3/2$. Since $|\widetilde g_\rho(k)|\asymp k^{-\rho-1}$ as $k\to\infty$, we have, within the range $1/2<\alpha<2$,
\[
    g_\rho\in H_{\alpha,\gamma,1}^{\sob}
    \quad\Longleftrightarrow\quad
    \alpha<\rho+\frac12.
\] Therefore, Theorem~\ref{thm:integration_randomized_median} suggests that, for every $\varepsilon>0$, the expected absolute integration error can decay at a rate arbitrarily close to $\Ocal\left(n_{\max}^{-\rho-1+\varepsilon}\right)$.

In our experiments, we consider $\rho\in \{0.3, 0.8, 1.3\}$. The corresponding limiting decay exponents suggested by the theory are $1.3$, $1.8$, and $2.3$, respectively. For each choice of $\rho$, $d$, $\bsa$, and $n_{\max}\in\{2^m\mid m=5,\ldots,14\}$, we generate $B=50$ independent realizations $Q_{n_{\max},h,\mathrm{med}}^{(1)},\ldots,Q_{n_{\max},h,\mathrm{med}}^{(B)}$ of Algorithm~\ref{alg:median_of_means}. We take $h(n) = \max(1, \log\log n)$. Since the exact integral is known a priori, we estimate the expected absolute error by the sample average
\[ \widehat E_B(n_{\max})\coloneqq \frac{1}{B}\sum_{b=1}^{B}\left| I(f_{\rho,\bsa})-Q_{n_{\max},h,\mathrm{med}}^{(b)}(f_{\rho,\bsa})\right|. \]
For the $b$-th realization, let $n_{\mathrm{tot}}^{(b)}=n^{(b)}_1+\cdots+n^{(b)}_R$ denote the total number of function evaluations. In the figures below, the horizontal axis represents the mean computational cost
\[
    \overline n_{\mathrm{tot}}
    \coloneqq
    \frac{1}{B}
    \sum_{b=1}^{B}
    n_{\mathrm{tot}}^{(b)}.
\]
To compare the observed convergence directly with
Theorem~\ref{thm:integration_randomized_median}, the empirical decay
rates reported below are obtained by a least-squares regression of
$\log\widehat E_B(n_{\max})$ against $\log n_{\max}$ over the five
largest values of $n_{\max}$.

\begin{figure}[t]
    \centering
    \begin{minipage}[b]{0.48\textwidth}
        \centering
        \includegraphics[width=\linewidth]{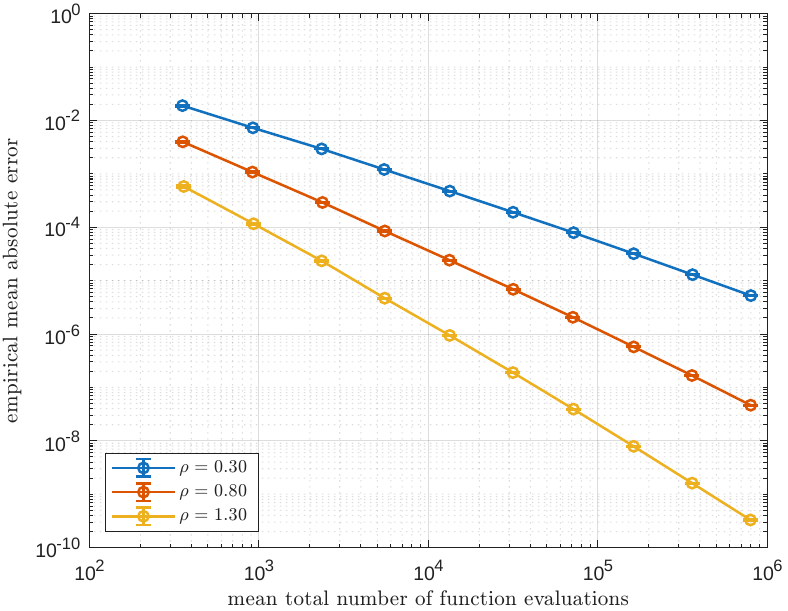}
        \subcaption{$d=1$}
        \label{fig:integration_d1}
    \end{minipage}
    \hfill
    \begin{minipage}[b]{0.48\textwidth}
        \centering
        \includegraphics[width=\linewidth]{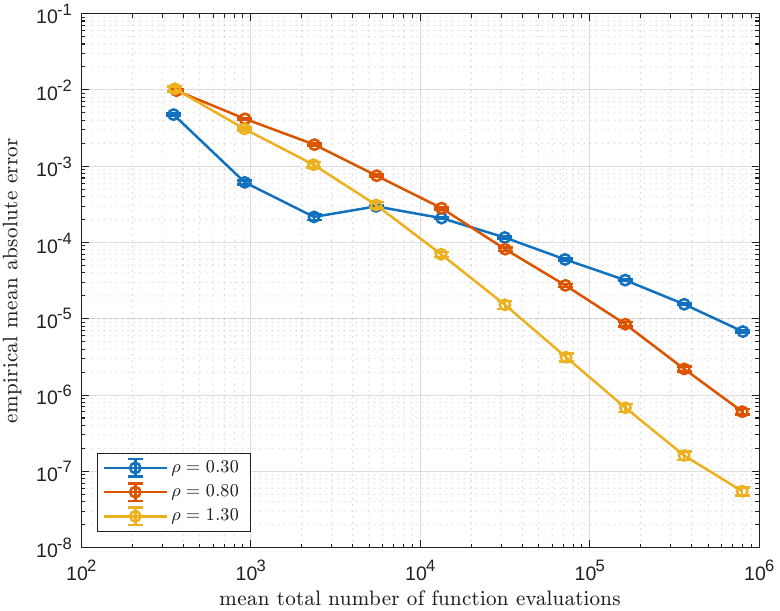}
        \subcaption{$d=4$}
        \label{fig:integration_d4}
    \end{minipage}
    \caption{Empirical mean absolute errors $\widehat E_B(n_{\max})$ versus the mean total number of function evaluations $\overline n_{\mathrm{tot}}$ for the median-of-means tent-transformed rank-1 lattice rule applied to the equal-amplitude test functions $a_j=1/d$. The results correspond to $\rho\in\{0.3,0.8,1.3\}$ and are based on $B=50$ independent realizations.}
    \label{fig:integration_unweighted}
\end{figure}

Figure~\ref{fig:integration_unweighted} shows the results for the low-dimensional equal-amplitude cases $a_j=1/d$ for all $j=1,\ldots,d$ with $d=1$ on the left and $d=4$ on the right. For $d=1$, stable convergence is observed for all three values of $\rho$. The regression over the five largest values of $n_{\max}$ gives the empirical rates $n_{\max}^{-1.2982}, n_{\max}^{-1.8042}, n_{\max}^{-2.2944}$, for $\rho=0.3$, $0.8$, and $1.3$, respectively. These rates agree closely with the theoretical predictions $n_{\max}^{-1.3}, n_{\max}^{-1.8}, n_{\max}^{-2.3}$. For $d=4$, the case $\rho=0.3$ exhibits more pronounced
pre-asymptotic variability, and the observed rates are slightly slower
than in the one-dimensional experiment. The corresponding regression
rates are $n_{\max}^{-1.0148}, n_{\max}^{-1.7789}, n_{\max}^{-2.0516}$.

\begin{figure}[t]
    \centering
    \begin{minipage}[b]{0.48\textwidth}
        \centering
        \includegraphics[width=\linewidth]{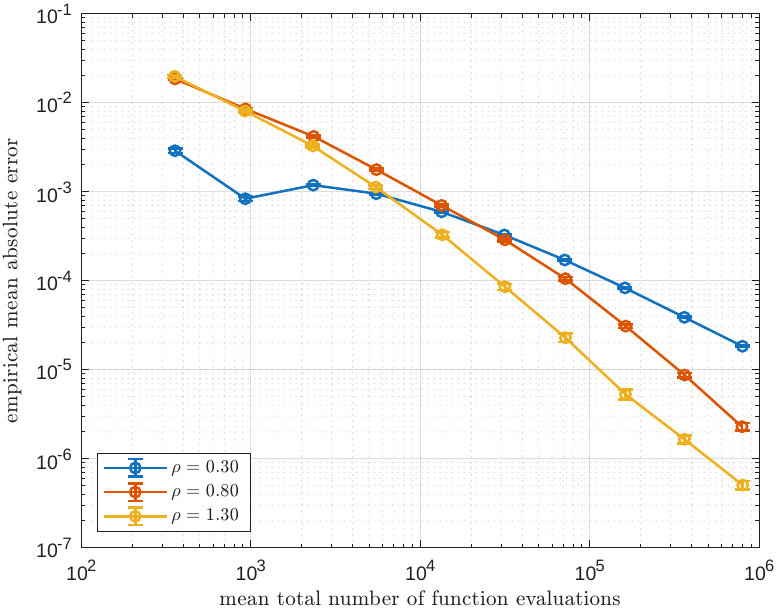}
        \subcaption{$d=20$}
        \label{fig:integration_d20}
    \end{minipage}
    \hfill
    \begin{minipage}[b]{0.48\textwidth}
        \centering
        \includegraphics[width=\linewidth]{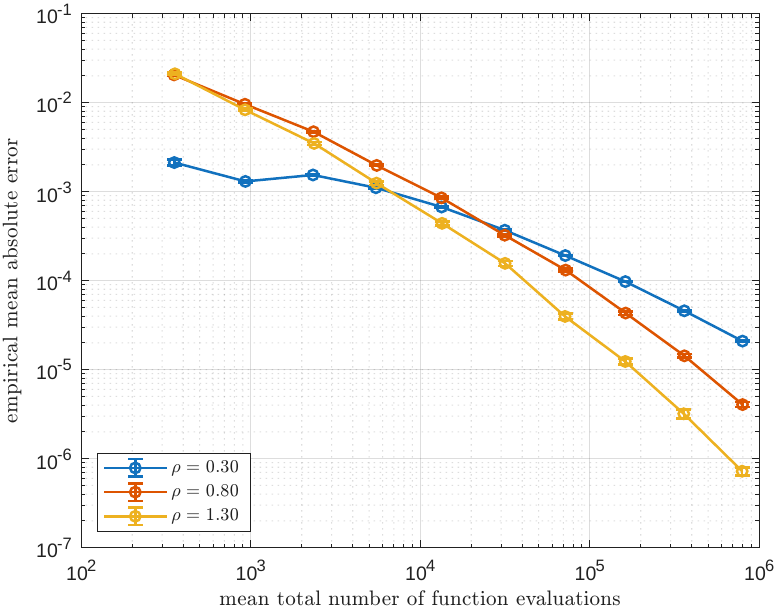}
        \subcaption{$d=100$}
        \label{fig:integration_d100}
    \end{minipage}
    \caption{Empirical mean absolute errors $\widehat E_B(n_{\max})$ versus the mean total number of function evaluations $\overline n_{\mathrm{tot}}$ for the anisotropic test functions $a_j=j^{-2}$. Results are shown for $d=20$ and $d=100$, with $\rho\in\{0.3,0.8,1.3\}$ and $B=50$ independent realizations.}
    \label{fig:integration_weighted}
\end{figure}

Figure~\ref{fig:integration_weighted} presents the results for the
higher-dimensional anisotropic cases $a_j=j^{-2}$ for $j=1,\ldots,d$, with $d=20$ on the left and $d=100$ on the right. The convergence behavior is remarkably similar in the two dimensions. This reflects the rapid decay of the amplitudes $a_j$, which makes the higher-index coordinates progressively less influential. For $d=100$, the regression over the five largest values of $n_{\max}$ gives the empirical rates $n_{\max}^{-1.034}, n_{\max}^{-1.5847}, n_{\max}^{-1.915}$, for $\rho=0.3$, $0.8$, and $1.3$, respectively. Although these pre-asymptotic rates are somewhat slower than the corresponding one-dimensional rates, the experiment still demonstrates stable convergence in a dimension as high as $d=100$ when the coordinate amplitudes decay sufficiently rapidly.

%%%%%%%%%%%%%%%%%%%%%%%%%%%%%%%%%%%%%%%%%%%%%%%%%%
\subsection{Function approximation}
Finally, we investigate the empirical performance of the multiple tent-transformed lattice algorithm \eqref{eq:multiple_lattice_approx}, where construction is described in Algorithm~\ref{alg:multiple_lattice}. We use the same family of test functions $f_{\rho,\bsa}(\bsx)$ as in the preceding numerical integration experiments, with $\rho=3/4$. As noted above, $f_{\rho,\bsa}(\bsx)\in H_{\alpha,\bsgamma,1}^{\sob} \Leftrightarrow \alpha<5/4$. To construct the multiple tent-transformed lattice algorithm $A^{\mathrm{mult}}(f)$, we first define the symmetric weighted hyperbolic cross set by
\begin{align}
    \mathcal{A}_d^{\mathrm{sym}} \coloneqq \mathcal{A}_{d,5/4,\bsa}^{\mathrm{sym}}(N_p) = \left\{\boldsymbol{k} \in \mathbb{Z}^d \mid r_{5/4,\underline{\kappa}_{5/4}^{-1/2}\bsa} (|\boldsymbol{k}|)\le N_p \right\},
\end{align}
where $N_p$ is chosen from the set
\begin{align}
    \mathcal{R} = \{ N_p \mid N_p \text{ is the largest prime less than } 2^k \text{ for } k \in \{5,6,\ldots,17\} \}.
\end{align}
Here, we choose the rescaled weights $\underline{\kappa}_{5/4}^{-1/2}\bsa$, where $\underline{\kappa}_{5/4}^{-1/2}$ is given by
\begin{align*}
    \underline{\kappa}_{5/4}^{-1/2}  = (c'_{5/4})^{-1/2} = \left(\frac{3}{8}\pi^{7/2}\int_{0}^{1/2} \frac{\sin^2(\pi t)}{t^{3/2}} \, \mathrm{d}t\right)^{-1/2} \approx 0.171.
\end{align*}
For each $N_p\in \mathcal{R}$, we repeatedly run Algorithm~\ref{alg:multiple_lattice} (with $c=1.2$ and $\delta=0.001$) until the covering condition $\bigcup_{t=1}^L \Acal_{d,t}^{\sym} = \Acal_d^{\sym}$ is satisfied.

Using the index set $\Acal_d^{\sym}$ and the rescaled weights above, we then construct $A^{\mathrm{mult}}(f)$ in \eqref{eq:multiple_lattice_approx}, requiring $n_{\mathrm{tot}}$ function evaluations. Since the test function belongs to $H_{\alpha,\bsgamma,d}^{\sob}$ for every $\alpha<5/4$, Theorem~\ref{thm:approximation} suggests, in the limit $\alpha \to 5/4^-$, an $L_\infty$ error decay arbitrarily close to $\mathcal{O}(n_{\text{total}}^{-3/4+\epsilon})$ for arbitrarily small $\epsilon>0$. Since the exact $L_\infty$ error cannot be evaluated in practice, we approximate it by the maximum of the pointwise errors $|f_{\rho,\bsa}(\bsx)-A^{\mathrm{mult}}(f)(\bsx)|$ over the first $2^{15}$ points of the $d$-dimensional Sobol’ sequence.

Figure~\ref{fig:approximation} presents the results for the anisotropic choice $a_j=j^{-3}$ for $j=1,\ldots,d$, with $d=2$ in the left panel and $d=100$ in the right panel. In both cases, the observed convergence behavior closely follows the reference rate $\mathcal{O}(n_{\mathrm{total}}^{-0.9\alpha+0.5})$, which is slightly slower than the limiting theoretical rate $\mathcal{O}(n_{\mathrm{total}}^{-\alpha+0.5})$ for $\alpha=5/4$.

\begin{figure}[t]
    \centering
    \begin{minipage}[b]{0.48\textwidth}
        \centering
        \includegraphics[width=\linewidth]{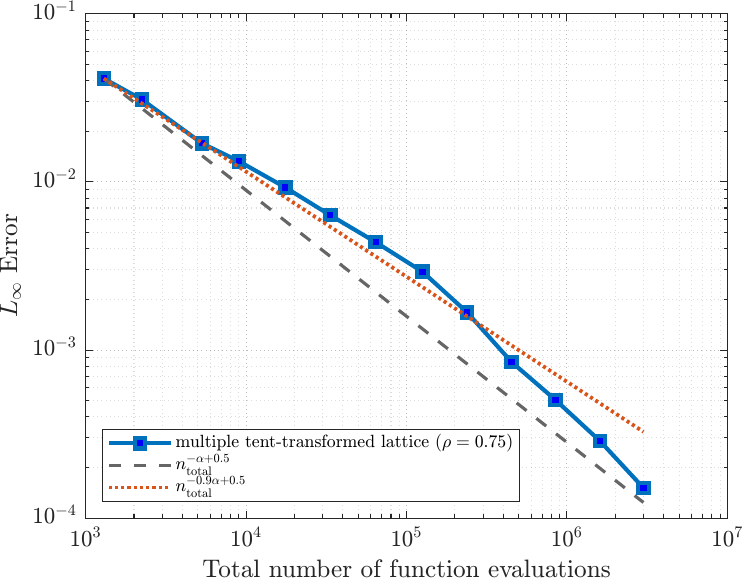}
        \subcaption{$d=2,\;\rho=3/4,\;a_j=j^{-3}$}
        \label{fig:approximation_d2}
    \end{minipage}
    \hfill
    \begin{minipage}[b]{0.48\textwidth}
        \centering
        \includegraphics[width=\linewidth]{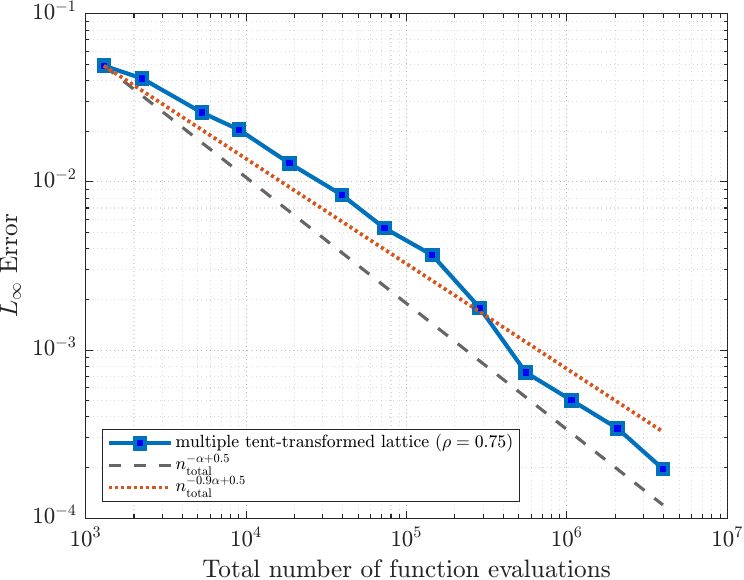}
        \subcaption{$d=100,\; \rho=3/4,\;a_j=j^{-3}$}
        \label{fig:approximation_d100}
    \end{minipage}
    \caption{Empirical discrete $L_{\infty}$ error of $A^{\mathrm{mult}}(f)$ versus the total number of function evaluations $n_{\mathrm{tot}}$ for the anisotropic test functions  $a_j=j^{-3}$. Results are shown for $d=2$ and $d=100$ with $\rho=3/4$.}
    \label{fig:approximation}
\end{figure}

\section*{Acknowledgment}
The authors thank Yannick Meiners (Osnabr\"uck) for a helpful remark on related embedding results among fractional smoothness spaces.

\bibliographystyle{siam}
\bibliography{ref.bib}

\end{document}